\documentclass[11pt]{article}
\usepackage{amsfonts} 
\usepackage{amsmath}
\usepackage{amsthm}
\usepackage{epsfig}
\usepackage{amssymb}
\usepackage{psfrag}
%was epsfig 
%\pagestyle{empty} 

\newcommand{\wt}{\tilde{w}}
\newcommand{\wl}{w^{(j)}}

\newcommand{\be}{\begin{eqnarray}}
\newcommand{\ee}{\end{eqnarray}}

\newcommand{\om}{\Omega}
\newcommand{\Div}{{\rm div}\,}

\newcommand{\dist}{{\rm dist}\,}

\newcommand{\1}{{\bf 1}}
\newcommand{\tr}{{\rm tr}\,}
\newcommand{\cof}{{\rm cof}\,}
\newcommand{\adj}{{\rm adj}\,}

%% L_{\scriptscriptstyle{H}}\theoremstyle{plain} %% This is the default
%\newcounter{Lemma}
%\newtheorem{thm}{Theorem}
%\newtheorem{cor}{Corollary}
%\newtheorem{lem}{Lemma}
%\newtheorem{prop}{Proposition}
%\newtheorem{ax}{Axiom}
%\theoremstyle{definition}

%\newtheorem{defns}{Definitions}[section]
%\theoremstyle{remark}

%\newtheorem{notation}{Notation}
%\renewcommand{\thenotation}{}  
% to make the
% notation environment unnumbered

%\newtheorem{examples}{Examples}[section]
%\numberwithin{equation}{section}
%\newcommand{\thmref}[1]{Theorem~\ref{#1}}
%\newcommand{\secref}[1]{\S\ref{#1}}
%\newcommand{\lemref}[1]{Lemma~\ref{#1}} 
 
% \long\def\onefigure#1#2#3{%  #1 picture,  #2  caption,  #3 label
%\begin{figure}[htb]
%\begin{center}
%#1
%\end{center}
%\caption{#2}
%\label{#3}
%\end{figure}
%}
 %end onefigure def
%
%\long\def\twofigures#1#2#3#4#5{%  #1,#3 pictures,  #2,#4 labels  #5 caption
%\begin{figure}[htb]
%\begin{center}
%\begin{tabular}{c}
%#1\\#2
%\end{tabular}\hfil\hfil\hfil\hfil
%\begin{tabular}{c}
%#3\\#4
%
%\end{tabular}
%\end{center}
%\caption{#5}
%\end{figure}
%} %end twofigures def

%\def\Ipe#1{\def\IPEfile{#1}\input{\IPEfile}}

%\usepackage{epsfig}
%\usepackage{a4wide}

%\newfont{\Bb}{msbm10 scaled\magstep 1}

%\newenvironment{proof}{{\it Proof:}\hspace {0.5em}}
%{\\* \null\hfill \par}
% lots of commands

%\newcommand{\meas}{\mathcal{M}({\mat})}

\newcommand{\eps}{\epsilon}

\newcommand{\sch}{\mathcal{H}}

\newcommand{\scl}{\mathcal{L}}

\DeclareMathOperator*{\essinf}{ess\,inf}
\newcommand{\veps}{\varepsilon}
\newcommand{\id}{\mathbf{i}}
\newcommand{\wctwo}{\overset{\circ}{w_{2}}}
\newcommand{\wcone}{\overset{\circ}{w_{1}}}
\newcommand{\wid}{\overset{\circ}{\textbf{i}}}
\newcommand{\wc}{\overset{\circ}{w}}

\pagestyle{myheadings}

\def\XXint#1#2#3{{\setbox0=\hbox{$#1{#2#3}{\int}$}
     \vcenter{\hbox{$#2#3$}}\kern-.5\wd0}}

%\pagestyle{plain}
%\usepackage{a4wide} 

%ignores overfull hboxes of length greater than 5 points
%\hfuzz5pt
\newtheorem{example}{Example}[section]
\newtheorem{thm}{Theorem}[section]
\newtheorem{prop}{Proposition}[section]
\newtheorem{lem}{Lemma}[section]

\newtheorem{defn}{Definition}[section]
\newtheorem{rem}[prop]{Remark}
\numberwithin{equation}{section}

\title{A remark on a stability criterion for the radial cavitating map in nonlinear elasticity}
%\footnote{2010 AMS Classification 35A15, 49J40, 49N60}\\
\author{J. Bevan }
%\footnote{Department of Mathematics, University of Surrey, Guildford, Surrey,  GU2 7XH, UK.  tel: +44 (0)1483 682620. email: j.ben@surrey.ac.uk}}

%%%%%%%%%%%%%%%%%%%%%%%%%%%%%%%%%%%%%%%%

% Below here is UPDATED section 3

%%%%%%%%%%%%%%%%%%%%%%%%%%%%%%%%%%%%%%%%

%\footnote{Department of Mathematics, University of Surrey, Guildford, Surrey,  GU2 7XH, UK.  tel: +44 (0)1483 682620. email: j.ben@surrey.ac.uk}}
\begin{document}
\maketitle
\begin{abstract}We study the integral functional $I(w):=\int_{B}\left| \adj \nabla w \left(\frac{w}{|w|^{3}}\right)\right|^{q}\,dx$ on suitable maps $w:B \subset \mathbb{R}^{3} \to \mathbb{R}^{3}$ and where $2q \in (2,3)$.   The inequality $I(w) \geq I(\mathbf{i})$, which we establish on a subclass of the admissible maps, was first proposed in \cite{SS08} as one of two possible necessary conditions for the stability, i.e. local minimality, of the radial cavitating map in nonlinear elasticity.  Here, $\mathbf{i}$ is the identity map.  Admissible maps $w$ either do not vanish (and in this case possess a single discontinuity $x_{0}$ in B which produces a cavity about the origin), or vanish at exactly one point $x_{0}$ in B, in which case $w$ is a diffeomorphism in a neighbourhood of $x_{0}$.   We show that $I(\cdot)$ behaves like a polyconvex functional and associate with it another functional, $K(\cdot)$, satisfying $I(w) \geq I(\mathbf{i}) + q(K(w)-K(\mathbf{i}))$.   We give conditions under which $K(w) = K(\mathbf{i})$, and from these infer $I(w) \geq I(\mathbf{i})$.   It is also shown that (i) $K$ is strictly decreasing along paths of admissible functions that move $x_{0}$ \emph{away} from the origin and (ii) $K(w)$ exhibits some quite pathological behaviour when $w$ is sufficiently close to $\mathbf{i}$. \end{abstract}

\section{Introduction}
In this paper we prove that, when $q \in \left(1,\frac{3}{2}\right)$, the inequality 
\begin{equation}\label{stabnec}
\int_{B}\left| \adj \nabla w \left(\frac{w}{|w|^{3}}\right)\right|^{q}\,dx \geq \int_{B}\frac{1}{|x|^{2q}}\,dx \end{equation}
holds for all $w$ in a certain subset of the Sobolev space $W^{1,2q}(B;\mathbb{R}^{3})$, details of which are given below.    This inequality first appears in \cite[(1.4), p 203]{SS08} as a necessary condition for the stability, i.e. local minimality, of the radial cavitating map as measured by an energy of the form
\[ I(u) = \int_{B} W(\nabla u)\, dx.\]
The authors of \cite{SS08} leave the question of whether or not \eqref{stabnec} holds open.

The \emph{polyconvex} stored-energy functions $W$ defined on $3 \times 3$ matrices considered in \cite{SS08} are of the form
\[W(F)=\Psi(|F|,|\adj F|) + h(\det F),\]
where $\Psi$ is required to satisfy certain scaling and structural assumptions and $h$ should, among other things, be convex.   Although the class of such polyconvex $W$ is wide, the origin of \eqref{stabnec} can be understood by focussing on the following particular example.  Let
\begin{equation}\label{typicalw}W(F) := \left\{\begin{array}{l l}|F|^{p} + |\adj F|^{q} + h(\det F) & \textrm{if} \ \det F > 0 \\
+\infty & \textrm{otherwise}.\end{array}\right.\end{equation}
Here, the exponents should satisfy $2 \leq p <3$ and $1 \leq q < \frac{3}{2}$.

There are two cases to distinguish, based on the relative growth rates of the terms $|F|^{p}$ and $|\adj F|^{q}$.   When the term $|F|^{p}$ dominates, i.e. when $p > 2q$, the stability of the radial cavitating map is determined by another inequality---\cite[(4.8) in Corollary 4.4]{SS08}---which, owing to the efforts of a considerable number of researchers, is known to hold in cases applying to nonlinear elasticity:  see \cite[Remark 4.5]{SS08} for details.  When the $|\adj F|^{q}$ term dominates, i.e. when $p < 2q$, stability is determined by \eqref{stabnec}.   Therefore in the rest of the paper we will assume $2 < 2q < 3$.

%We note that the authors of \cite{SS08} find necessary conditions for stability applying to more general integrands than $W$ defined above:  see \cite[Theorem 4.2]{SS08} in particular.  It just happens that the $W$ in equation \eqref{typicalw} exhibits the main properties used in their analysis. 

% Sivaloganathan and Spector ask in \cite{SS08} whether \eqref{stabnec} holds for $q$ such that $1 \leq 2q < 3$.  

The radial cavitating map was discovered by Ball in \cite{Ba81}.   He considered an elastic material initially occupying the ball $B$ and subject to a uniform linear displacement $x \mapsto \lambda x$ for $x \in \partial B$.   In \cite{Ba81} it is shown that, for sufficiently large $\lambda$, the material responds by forming a radially symmetric cavity at the centre of the ball.   In other words, the material deforms according to a displacement of the form 
\[u_{\textrm{radial}}(x)= r(|x|)\frac{x}{|x|} \ \ \ \ x \in B,\]
where $r(0) > 0$.  Moreover, the energy associated with $u_{\textrm{radial}}$ is strictly smaller than that of the uniform displacement 
\[u_{\lambda}(x)=\lambda x \ \ \ \ x \in B,\] 
which signifies, among other things, a loss of quasiconvexity in a pure displacement problem.   It is clear, then, that $u_{\textrm{radial}}$ is not a global minimizer among maps which agree with $u_{\lambda}$ on the boundary.  However, it is still possible that $u_{\textrm{radial}}$ is a local minimizer in some appropriate norm.  Thus the relevance of the necessary conditions presented in \cite{SS08}, one of which we study here.  
For an overview of cavitation, see \cite{Ba01},  or for more recent progress consult \cite{ST06,SN10,MCH2}.  For details of cavitation in the context of functionals with a surface energy term, the papers \cite{MuSp95,MCH1} are highly recommended.

%The proof I present here answers their question in the affirmative for $q$ such that $1<2q<3$:  the case $q=\frac{1}{2}$ is still open, as far as I am aware.

The topological constraints on the admissible maps considered in this paper are, initially, exactly those imposed via statements (4.5) and (4.6) in \cite[Theorem 4.2]{SS08}.   Roughly speaking, and for now ignoring the invertibility constraints, they require either that deformations are bounded away from the origin or, if they do vanish, then they do so once and are a diffeomorphism in a neighbourhood of the zero.   See Definition \ref{defadmiss} for the details.   Our results apply to strengthened versions of these conditions:  see the statement of Lemma \ref{salix4} and Definition \ref{d:condtwoprime} for details.   

To study the functional $I$ we bound it below by another functional $K$, which is defined in \eqref{K}, and study $K$ instead.  Specifically, we show that
\[I(w) \geq I(\mathbf{i}) + q (K(w)-K(\mathbf{i})),\]
where $\mathbf{i}$ is the identity map and $q$ is the exponent appearing in \eqref{stabnec}.     The main feature of $K$ is that if it is minimized at the identity then \eqref{stabnec}---the central inequality of this paper---holds at $w$.   We find in Section \ref{S:nonvanish} that $K$ is constant on sufficiently regular paths of functions which satisfy a version of the topological conditions mentioned above.  This can be exploited to prove Theorem \ref{t:main1}, which says that inequality \eqref{stabnec} is true for any $w$ which can be connected by such a path to the identity map and whose (single) discontinuity or zero sits at the origin of the domain, in common with that of the identity.   Thus the location of the discontinuity or zero of $w$ is of central importance.   In this context, see the interesting results of \cite{SS99,SS00}. 

 It is possible to apply variations which `move' the singularity of a given $w$:  this is the emphasis of Section \ref{S:vanish}.   The two most significant behaviours of 
$K$ in this regard, and applying to maps which vanish at one point in the domain, are that: (i) $K(w)$ decreases if the zero of $w$ moves \emph{away} from the origin (see Proposition \ref{p:furthersmaller}), and (ii) there are maps $w$ arbitrarily close to the identity map such that the derivative of $K(w)$ along suitably chosen variations\footnote{These will turn out to be so-called inner variations.} about $w$ can be made arbitrarily large.   Point (i) suggests that the identity may not be a minimizer of $K$: see the latter part of Section \ref{S:vanish} for the details and discussion. Point (ii), which is proved in Proposition \ref{p:laster}, suggests a rather pathological behaviour near the identity, but leaves open the question of whether the identity map minimizes $K$ among maps whose zero or discontinuity is not located at the origin.

\section{Notation}

We denote the $3 \times 3$ real matrices by $\mathbb{R}^{3 \times 3}$ and the identity matrix by $\1$.  Throughout, $B$ is the unit ball in $\mathbb{R}^{3}$ and $B(a,R)$ represents the open ball in $\mathbb{R}^{3}$ centred at $a$ with radius $R$.   Other standard notation includes $||\cdot ||_{k,p;\om}$ for the norm on the Sobolev space $W^{k,p}(\om)$, and both $||\cdot||_{p;\om}=||\cdot||_{L^{p}(\om)}$ denote the norm on $L^{p}(\om)$.    Here, $\om$ is a domain in $\mathbb{R}^{3}$.  We will denote the unit vectors in $\mathbb{R}^{3}$ by $\mathbb{S}^{2}$.

The tensor product of two vectors $a \in \mathbb{R}^{3}$ and $b \in \mathbb{R}^{3}$ is written $a \otimes b$; it is the $3 \times 3$ matrix whose $(i,j)$ entry is $a_{i}b_{j}$.   The inner product of two matrices $X,Y \in \mathbb{R}^{3 \times 3}$ is $X \cdot Y = \tr(X^{T}Y)$.   
This obviously holds for vectors too.  Accordingly, we make no distinction between the norm of a matrix and that of a vector:  both are defined by $|\alpha|^{2}=\alpha \cdot \alpha$.     For any $3 \times 3$ matrix we write $\adj F= (\cof F)^{T}$, and $\det F$ denotes the determinant as usual.  Other notation relating to matrix algebra will be introduced when it is needed, most notably in Definition \ref{defo1}.  In calculations involving matrices we sum over repeated indices, unless stated otherwise.

The identity function will be written $\mathbf{i}$, and for any nonzero vector $x \in \mathbb{R}^{3}$ we define $\bar{x}=x/|x|$.    The same notation extends to maps $w:B \to \mathbb{R}^{3}$, so that we shall write $\bar{w}$ for the $\mathbb{S}^{2}-$valued function $\frac{w(x)}{|w(x)|}$ whenever $w(x) \neq 0$.  Integrals throughout this paper are nearly always with respect to three dimensional Lebesgue measure: accordingly,  $dx$ will be shorthand for $d \scl^{3}(x)$.   In other cases, the standard notation $\sch^{2}$ indicates two-dimensional Hausdorff measure.  Finally, constants $C$ appearing in inequalities may change from line to line.
%Where inequalities feature a constant $C$, it will often be the case that $C$ changes from line to line.  

\section{Stability when $w$ is bounded away from zero}\label{S:nonvanish}

We now turn to the description of the restricted class of maps in which inequality \eqref{stabnec} will be studied.    Firstly, a map $w$ in $W^{1,2q}(B;\mathbb{R}^{3})$ will be called a deformation if it satisfies $\det \nabla w > 0$ a.e. and is one-to-one almost everywhere.

We then define \emph{admissible deformations} as follows: 

\begin{defn}\label{defadmiss} Let $2q \in [1,3)$ and let $w \in W^{1,2q}(B;\mathbb{R}^{3})$ be a deformation satisfying $w=\bf{i}$ on $\partial B$.  In addition, suppose that there is a Lebesgue null set $N=N_{w} \subset B$ and an open set $U=U_{w} \subset B$ such that either
\begin{itemize}\item[(I)] $0 \in U$, \ \ $w(B \setminus N) \cap U = \emptyset$, or
 \item[(II)] $0 \in w(U)$, \ \ $w\arrowvert_{U}$ is a diffeomorphism, \ \ $w(B \setminus (U \cup N)) \cap w(U) = \emptyset$.
\end{itemize}
\end{defn}

Note that (I) and (II) are statements (4.5) and (4.6) in \cite[Theorem 4.2]{SS08}, and that the identity map $w=\bf{i}$ is admissible.

Let the functional $I(w)$ be defined by 
\begin{equation}\label{defI}
I(w) = \int_{B}\left| \adj \nabla w \left(\frac{w}{|w|^{3}}\right)\right|^{q}\,dx
\end{equation}
on admissible $w$ as described above.  Our approach to the problem of minimizing $I$ among admissible deformations is to view its integrand
\[ \Phi(w,\adj \nabla w) := \left|\adj \nabla w \left(\frac{w}{|w|^{3}}\right)\right|^{q}\]
as a convex function of the null Lagrangian 
\begin{equation}\label{G}G(w):=\adj \nabla w \left(\frac{w}{|w|^{3}}\right).\end{equation}
Here, the convexity is that of the function $f: \mathbb{R}^{3} \to \mathbb{R}$ defined by $f(k)=|k|^{q}$.  Thus 
\[\Phi(w,\adj \nabla w)=f(G(w))\] 
behaves rather like a polyconvex function:  see, e.g., \cite{Ba77,Ba01,Da08} for more on polyconvex functions in the classical sense.

The term null Lagrangian refers to a quantity whose Euler-Lagrange equation holds as an identity on smooth maps, i.e if
\begin{equation}\label{NL0}\Div D_{F}L(x,w,\nabla w) = D_{w}L(x,w,\nabla w)\end{equation}
holds for all smooth $w$.  In the $3 \times 3$ case, well known examples include $L_{1}(F)=F$, $L_{2}(F) = \adj F$, and $L_{3}(F) =\det F$.  See \cite{BCO,Da08} for much more on null Lagrangians.
In fact, \eqref{NL0} holds when $L$ is replaced by $G(w)$ provided  
$w$ is suitably smooth on $B \setminus \{0\}$ and $|w(x)| \geq c > 0$ for all nonzero $x$ in the ball $B$:  this can be seen by a calculation using the machinery of Section \ref{auxil}.  It cannot, however, be directly exploited because $\Phi(v,F)$ is not convex in $(v,F)$.  Instead, we use the convexity of $f$, as indicated above: see Prop \ref{salix1} for details.

The functional $I$ defined in \eqref{defI} exhibits some interesting behaviour, as the following examples show.  
%Both observations will be explained using the results contained in this paper.

\begin{example}\label{e:1}\emph{Consider maps of the form 
\[u(x)=r(|x|)\bar{x}\ \ \ \ x \in B.\]
Assume that $u$ lies in $W^{1,2q}(B,\mathbb{R}^{3})$ for $2q \in (2,3)$, and let $r(1)=1$, so that $u=\mathbf{i}$ on $\partial B$.
With $R=|x|$ and $\bar{x}=x/|x|$, 
\[\nabla u(x) = \left(r'(R)-\frac{r(R)}{R}\right) \bar{x} \otimes \bar{x} +\frac{r(R)}{R}\1.\]
Hence, 
\[ \adj \nabla u = \frac{rr'}{R}\1 + \left(\frac{r^{2}}{R^{2}}-\frac{rr'}{R}\right)\bar{x} \otimes \bar{x},\]
from which it follows that
\[ \adj \nabla u  \left(\frac{u}{|u|^{3}}\right) = \frac{\bar{x}}{R^2}.\]
In particular,
\begin{equation*}\begin{split}I(u) & = \int_{B} \left|\frac{\bar{x}}{R^2}\right|^{q}\,dx \\ 
& = 4 \pi \int_{B} R^{2-2q}\,dR \\
& = I(\bf{i}).\end{split}\end{equation*}}
\end{example}

The next example corresponds to a specific choice of $r$ in Example \ref{e:1}.  

\begin{example}\label{e:2}\emph{Consider the following map, which opens a cavity of radius $\lambda$ at the origin.   Fix $0 < \lambda < 1$ and define 
\begin{displaymath}w(x)=\left\{\begin{array}{l l}\lambda \bar{x} \ \  & \textrm{if} \ \ 0 < R < \lambda \\
 \mathbf{i} \ \ &  \textrm{if} \ \  \lambda \leq R \leq 1. \end{array}\right.\end{displaymath}
Applying Example \ref{e:1} with 
\begin{displaymath}r(R)=\left\{\begin{array}{l l}\lambda \ \  & \textrm{if} \ \ 0 < R < \lambda \\
 R  \ \ &  \textrm{if} \ \  \lambda \leq R \leq 1, \end{array}\right.\end{displaymath}
it follows that $I(w)=I(\mathbf{i})$.  Thus $I(\cdot)$ does not appear to distinguish between competitors $w$ that do not vanish and the identity  $\mathbf{i}$, which does.  Cf. conditions (I) and (II) in Definition \ref{defadmiss}.}\end{example}

Examples \ref{e:1} and \ref{e:2} show in particular that if $\mathbf{i}$ is indeed the minimizer of the functional $I(\cdot)$ then it is not unique in this regard.

\subsection{An auxiliary inequality}\label{auxil}

We now exploit the `polyconvexity' of the functional $I$.  

\begin{prop}\label{salix1} Let $w$ be admissible in the sense of Definition \ref{defadmiss}, and let the functional $I$ be given by 
\[I(w)=  \int_{B} f(G(w))  \,dx, \]
 with $f(k)=|k|^{q}$ and 
 \[G(w) =\adj \nabla w \left(\frac{w}{|w|^{3}}\right)\]
as in \eqref{G}.   Let $q$ satisfy $2< 2q < 3$.
Then
\begin{equation}\label{LB} I(w) \geq I(\mathbf{i}) + q \int_{B} \frac{x}{|x|^{2q -1}} \cdot G(w) \,dx - q \int_{B}\frac{1}{|x|^{2q}}\,dx.\end{equation} 
\end{prop}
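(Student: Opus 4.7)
The plan is to treat the integrand $f(G(w)) = |G(w)|^q$ as a convex function of $G(w)$ and apply the scalar convexity inequality pointwise. Since $q > 1$, the function $f(k) = |k|^q$ on $\R^3$ is convex and $C^1$ with gradient $Df(k) = q|k|^{q-2}k$ (extended continuously by $0$ at $k=0$). For the identity map, $\adj \nabla \mathbf{i} = \mathbf{1}$, so a direct calculation gives
\[
G(\mathbf{i})(x) = \frac{x}{|x|^3}, \qquad |G(\mathbf{i})|^q = \frac{1}{|x|^{2q}}, \qquad Df(G(\mathbf{i})) = q\,\frac{x}{|x|^{2q-1}}.
\]
These are exactly the quantities featuring in \eqref{LB}.

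Next I apply the convexity inequality $f(k) \geq f(k_0) + Df(k_0)\cdot(k - k_0)$ pointwise a.e., with $k_0 = G(\mathbf{i})(x)$ (nonzero for every $x \neq 0$) and $k = G(w)(x)$. Both the conditions (I) and (II) of Definition \ref{defadmiss} ensure $G(w)$ is defined off a Lebesgue-null set. This yields
\[
|G(w)(x)|^q \ \geq \ \frac{1}{|x|^{2q}} + q\,\frac{x}{|x|^{2q-1}} \cdot G(w)(x) - \frac{q}{|x|^{2q}}
\]
for a.e.\ $x \in B$. Integrating over $B$ and recognising $I(\mathbf{i}) = \int_B |x|^{-2q}\,dx$ from Example \ref{e:1} with $r(R) = R$ produces \eqref{LB} directly.

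The only real obstacle is ensuring that the integrals on the right-hand side are well-defined, so that integration of the pointwise inequality is legitimate. The purely singular integral $\int_B |x|^{-2q}\,dx$ converges precisely because $2q < 3$. For the cross term one may assume $I(w) < \infty$, since otherwise \eqref{LB} is trivial; this gives $G(w) \in L^q(B;\R^3)$. H\"older's inequality with conjugate exponents $q$ and $q' = q/(q-1)$, combined with the algebraic identity $(2q-2)q' = 2q$, bounds
\[
\int_B \left|\frac{x}{|x|^{2q-1}} \cdot G(w)\right|\,dx \ \leq \ \biggl(\int_B |x|^{-2q}\,dx\biggr)^{1/q'} \|G(w)\|_{L^q(B)},
\]
which is finite. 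Since the pointwise convexity inequality holds on a set of full measure in $B$, integrating it yields \eqref{LB}.
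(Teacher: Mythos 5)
Your proof is correct and rests on the same central idea as the paper's: view $f(k)=|k|^q$ as a convex function of the null Lagrangian $G(w)$, bound $f(G(w))$ pointwise below by the supporting affine function of $f$ at $G(\mathbf{i})$, and integrate. The identifications $G(\mathbf{i})=x/|x|^3$, $Df(G(\mathbf{i}))=q\,x/|x|^{2q-1}$, and the resulting pointwise inequality all match the paper's.

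Where you deviate is the integrability step. The paper establishes $Df(G(\mathbf{i}))\cdot G(w)\in L^1(B)$ by a case analysis on conditions (I) and (II) of Definition~\ref{defadmiss}: under (I) it bounds $w/|w|^3$ by a constant and then uses $|\adj\nabla w|\le|\nabla w|^2\in L^q$; under (II) it splits $B$ into the region away from the zero $x_0$ (handled as in (I)) and a small neighbourhood of $x_0$ where the diffeomorphism hypothesis supplies $c|x-x_0|\le|w(x)|\le C|x-x_0|$ (with a further subcase when $x_0=0$). You compress all of this into a single H\"older estimate using $G(w)\in L^q(B)$ together with $\|\,|x|^{2-2q}\,\|_{q'}<\infty$, which is cleaner. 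However, the dismissal ``otherwise \eqref{LB} is trivial'' is not quite airtight: if $I(w)=+\infty$ the right-hand side of \eqref{LB} need not be well-defined, since $\int_B \frac{x}{|x|^{2q-1}}\cdot G(w)\,dx$ could a priori be of the form $\infty-\infty$, and the pointwise convexity inequality only bounds its positive part in terms of $|G(w)|^q$. This is immaterial in practice because the very estimates the paper runs through (bounded $w/|w|^3$ under (I), the bi-Lipschitz control near $x_0$ under (II)) show $G(w)\in L^q(B)$ for \emph{every} admissible $w$, so $I(w)<\infty$ holds automatically. If you replace the ``otherwise trivial'' remark by a brief observation that admissibility already forces $I(w)<\infty$, your version becomes fully rigorous and somewhat more economical than the paper's.
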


\begin{proof} First note that $G(\mathbf{i})=\frac{x}{|x|^{3}}$ satisfies $|G(\mathbf{i})| \geq 1$ for all $x$ in $B$.  Since $q \geq 1$, $f$ is convex, and since $G(\mathbf{i})$ is bounded away from zero, it follows that
\begin{equation}\label{conv} f(G(w)) \geq f(G(\mathbf{i})) + Df(G(\mathbf{i}))\cdot(G(w)-G(\mathbf{i})) \end{equation}
holds pointwise.   

Now 
\[Df(G(\mathbf{i}))=  \frac{qx}{|x|^{2q-1}},\]
and so the function 
\[Df(G(\mathbf{i})) \cdot G(\mathbf{i}) = \frac{q}{|x|^{2q}}\]
is clearly integrable.  

In order to check the integrability of the term $Df(G(\mathbf{i})) \cdot G(w)$ we distinguish two cases.

Firstly, suppose (I) holds.  Then we may assume that $w/|w|^{3}$ is essentially bounded on $B$, and so by H\"{o}lder's inequality
\[ ||Df(G(\mathbf{i})) \cdot G(w)||_{1}  \leq  C ||x/|x|^{2q-1}||_{q'}|||\nabla w|^{2}||_{q},\]
where $q'$ is the H\"{o}lder conjugate to $q>1$ and $C$ is a constant depending only on $||w^{-2}||_{\infty}$ and $q$.   It is easily verified that 
$||x/|x|^{2q-1}||_{q'}$ is finite exactly when $2q<3$.   In particular, we note that both endpoints of the interval $2 \leq 2q \leq 3$ are excluded by this procedure.

Secondly, suppose that (II) holds.   Thus we may assume that there is an open set $U \subset B$ such that $w(U)$ is open and contains $0$, and that, in addition, $(w\arrowvert_{B\setminus U})^{-1}$ is essentially bounded.   The integral
\begin{equation}\label{BwithoutU}\int_{B\setminus U} |Df(G(\mathbf{i})) \cdot G(w)|\,dx \end{equation}
can therefore be estimated using the method that was applied to case (I).   Now $w$ has a zero in $U$.   Suppose first that $w(x_{0})=0$ and $x_{0} \neq 0$.   Without loss, we can also suppose $\dist(U,0)>0$, so that $\frac{x}{|x|^{2q-1}}$ is bounded on $U$.  Moreover, since $\nabla w$ inverts on $U$, there are constants $c<C$ depending on $U$ and $w$ such that
\begin{equation}\label{uplow} c|x-x_{0}| \leq |w(x)| \leq C|x-x_{0}| \ \ \textrm{if} \ x \in  U.\end{equation}
Since $\nabla w$ is by hypothesis continuous on $U$, it is now easy, in view of \eqref{uplow}, to estimate the integral in \eqref{BwithoutU}.   The only other case is $x_{0}=0$.  But then the integrability of \eqref{BwithoutU} is determined by that of the product $Df(G(\mathbf{i})) \cdot \frac{w}{|w|^{3}}$, which is bounded above by $1/|x|^{2q}$.  This is integrable because $2q <3$.  Therefore in cases (I) and (II) both sides of inequality \eqref{conv} are integrable.   Integrating \eqref{conv} gives \eqref{LB}.   \end{proof}

Inequality \eqref{LB} tells us that
\begin{equation}\label{morus}
I(w) \geq I(\id) +  q(K(w)-K(\id)), 
\end{equation}
where the functional $K$ is defined by
\begin{equation}\label{K} K(w):=\int_{B} \frac{x}{|x|^{2q -1}} \cdot G(w) \,dx. \end{equation}
It follows immediately from \eqref{morus} that if $K(w)=K(\id)$ then  $I(w) \geq I(i)$, which is the desired inequality.    Therefore our strategy is to show that $K(\gamma(\cdot;t))$ is constant on appropriately chosen paths of maps $(\gamma(\cdot;t))_{t \in  [0,1]}$, where  each $\gamma(\cdot;t): B \to B$ satisfies $\gamma(x;t)=x$ if $|x| =1$, $0 \leq t \leq 1$, and where $\gamma(\cdot;0)=w$, $\gamma(\cdot;1)=\id$.    While it is clearly important that the map $w$ is an admissible deformation in the sense of Definition \ref{defadmiss},  the same constraint need not apply to maps $\gamma(\cdot;t)$ `along the path'.   Intervening $\gamma(\cdot;t)$ need not be deformations, for example.   This leeway is exploited in Theorem \ref{t:main1} and in Section \ref{pctoid}.  

We begin by calculating the first variation of $K$, for which some notation is needed:  we introduce this below, together with two useful algebraic lemmas.

\begin{defn}\label{defo1}
Let $\xi$ and $\eta$ be two matrices in $\mathbb{R}^{3 \times 3}$.  We define the $3 \times 3$ matrix  $\langle \xi,\eta \rangle$ by
\[ \langle \xi,\eta \rangle = \partial_{t}\arrowvert_{t=0}\adj (\xi + t\eta).\]
In terms of the alternating symbol on three elements,
\[ \langle \xi,\eta \rangle_{ij}= \veps^{jab}\veps^{icd}\xi_{ac}\eta_{bd} \ \ \ \ \  1 \leq i,j \leq 3.\]
Equivalently, 
\[\adj(\xi + \eta)=\adj \xi + \langle \xi, \eta \rangle + \adj \eta.\]
\end{defn}

Note that $\langle \xi,\eta \rangle = \langle \eta,\xi \rangle$ for all $\xi, \eta$ and that $\langle \cdot\,,\cdot \rangle$ is a bilinear form.

\begin{lem}
For all $\xi \in \mathbb{R}^{3 \times 3}$ and all vectors $u,v \in \mathbb{R}^{3}$, 
\begin{equation}
\label{damson}\langle \xi, u \otimes v \rangle = \adj (\xi + u \otimes v) - \adj \xi.  
\end{equation}
In particular, 
\begin{equation}\label{jamson} \adj(\1 + u \otimes v) = (1+u \cdot v) \1 -u \otimes v
\end{equation}
 \end{lem}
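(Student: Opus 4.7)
The plan is to deduce \eqref{damson} from Definition \ref{defo1} and then specialise to $\xi=\1$ to obtain \eqref{jamson}.

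For \eqref{damson} I would apply the bilinear expansion $\adj(\xi+\eta)=\adj\xi+\langle\xi,\eta\rangle+\adj\eta$ from Definition \ref{defo1} with $\eta=u\otimes v$. The point is that $u\otimes v$ has rank at most one, so every $2\times 2$ submatrix of it has two linearly dependent rows and hence vanishing determinant. Every cofactor of $u\otimes v$ is therefore zero, so $\adj(u\otimes v)=0$ and rearranging the expansion yields \eqref{damson} at once.

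For \eqref{jamson} I set $\xi=\1$ in \eqref{damson}, reducing the problem to showing $\langle \1,u\otimes v\rangle=(u\cdot v)\1-u\otimes v$. The direct route is via the index expression of Definition \ref{defo1}:
\[\langle \1,u\otimes v\rangle_{ij}=\varepsilon^{jab}\varepsilon^{icd}\delta_{ac}u_b v_d=\varepsilon^{jab}\varepsilon^{iad}u_b v_d,\]
after which one sign-swap on each Levi-Civita factor reduces this to $\varepsilon^{ajb}\varepsilon^{aid}u_b v_d$, and the standard contraction identity $\sum_a \varepsilon^{ajb}\varepsilon^{aid}=\delta^{ji}\delta^{bd}-\delta^{jd}\delta^{bi}$ collapses the sum to $\delta_{ij}(u\cdot v)-u_i v_j$, which are precisely the entries of $(u\cdot v)\1 - u\otimes v$. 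Adding back the leading $\1$ from the specialisation produces \eqref{jamson}.

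As an index-free sanity check I would note the following alternative: when $u\cdot v\neq -1$ the matrix $\1+u\otimes v$ is invertible with $\det(\1+u\otimes v)=1+u\cdot v$ and, by Sherman--Morrison, $(\1+u\otimes v)^{-1}=\1-\frac{u\otimes v}{1+u\cdot v}$, so the identity $\adj A=(\det A)A^{-1}$ gives $\adj(\1+u\otimes v)=(1+u\cdot v)\1-u\otimes v$ outright; continuity in $(u,v)$ then dispatches the exceptional hyperplane $u\cdot v=-1$. The whole lemma is purely algebraic, so there is no serious obstacle; the only point where care is required is the sign convention in contracting the two Levi-Civita symbols, and the Sherman--Morrison derivation bypasses even that.
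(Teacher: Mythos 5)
Your proof is correct and follows essentially the same route as the paper: both derive \eqref{damson} from the bilinear expansion in Definition \ref{defo1} together with $\adj(u\otimes v)=0$, and then specialise to $\xi=\1$ to get \eqref{jamson}. You merely spell out (with the Levi-Civita contraction identity, plus a Sherman--Morrison sanity check) the computation of $\langle\1,u\otimes v\rangle$ that the paper states is ``easily calculated''.
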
                                                                                    
 \begin{proof}
 Identity \eqref{damson} follows by noting that 
 \[ \adj (\xi + u \otimes v) = \adj \xi + \langle \xi, u \otimes v \rangle + \adj \, u \otimes v\] 
 and using the fact that $\adj \,u \otimes v = 0$.
  %By \cite[Prop X, p255]{Da08}, 
%\[\cof (\xi + t u \otimes v) = (1-t)\cof \xi + t \cof (\xi + u \otimes v) \]
%for all $t$ and any $\xi,u,v$ as in the statement of the lemma.  Differentiating both sides with respect to $t$, setting $t=0$ and then taking the transpose of the resulting expression yields \eqref{damson}.   
To see \eqref{jamson}, apply \eqref{damson} with $\xi = \1$ to obtain
\[\adj(\1+u \otimes v) - \1 =  \langle \1,u \otimes v \rangle.\] 
The right-hand side is easily calculated, using Definition \ref{defo1}, to be $u\cdot v \1 - u \otimes v$. 
 \end{proof}

\begin{lem}\label{l:identity}
For all $F \in \mathbb{R}^{3 \times 3}$ and all $v \in \mathbb{S}^{2}$, 
\begin{equation}\label{ramson} \adj F = \adj F v \otimes v + \langle F, v \otimes F^{T}v\rangle.
 \end{equation}\end{lem}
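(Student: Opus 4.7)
The plan is to exploit the factorization $F + s\,v \otimes F^T v = (\1 + s\,v \otimes v)F$, which turns the computation of $\langle F, v \otimes F^T v\rangle$ via Definition \ref{defo1} into a short chain of identities for $\adj$ combined with the previous lemma.

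First, observe the algebraic identity
\[ v \otimes F^T v = (v \otimes v) F, \]
which is immediate from components, since both sides have $(i,j)$-entry equal to $v_i F_{kj} v_k$. It follows that $F + s\, v \otimes F^T v = (\1 + s\,v \otimes v)F$ for every scalar $s$.

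Next, apply the multiplicativity $\adj(AB) = \adj B \cdot \adj A$ to obtain
\[ \adj(F + s\, v \otimes F^T v) = \adj F \cdot \adj(\1 + s\, v \otimes v). \]
Since $v \in \mathbb{S}^{2}$ we have $|v|^{2}=1$, so \eqref{jamson} (applied with $u := sv$) gives $\adj(\1 + s\, v \otimes v) = (1+s)\1 - s\, v \otimes v$. Therefore
\[ \adj(F + s\, v \otimes F^T v) = (1+s)\adj F - s\, (\adj F)(v \otimes v). \]
Differentiating at $s=0$ and invoking Definition \ref{defo1} yields
\[ \langle F, v \otimes F^T v\rangle \,=\, \adj F - (\adj F)(v \otimes v), \]
which rearranges to \eqref{ramson} after noting that $(\adj F)(v \otimes v) = ((\adj F)v)\otimes v$.

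The only conceptual step is spotting the factorization $F + s\, v \otimes F^T v = (\1 + s\, v \otimes v)F$; once this is available, the proof reduces to the multiplicativity of $\adj$ together with the previous lemma. One could instead attack \eqref{ramson} directly via the $\veps$-tensor expression for $\langle\cdot,\cdot\rangle$ afforded by Definition \ref{defo1}, but that route is noticeably more tedious and obscures the structural reason, namely that $s\, v \otimes F^T v$ is a rank-one perturbation of $F$ \emph{from the left}, for which $\adj$ behaves multiplicatively.
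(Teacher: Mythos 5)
Your proof is correct and takes a genuinely different route from the paper's. The paper factors on the right: $F + v\otimes F^T v = F\,(\1 + F^{-1}v\otimes F^T v)$, which forces the argument to first assume $F$ invertible, compute $\adj(\1 + F^{-1}v\otimes F^T v)$ via \eqref{jamson}, reduce \eqref{ramson} to the claim $\langle F, v\otimes F^T v\rangle\tau = \adj F\,\tau$ for $\tau\perp v$, verify that by a small inner-product calculation, and finally extend to all $F$ by density of invertible matrices. You instead spot the left-factorization $F + s\,v\otimes F^T v = (\1 + s\,v\otimes v)F$, which holds for every $F$ because $v\otimes F^T v = (v\otimes v)F$ componentwise. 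Combined with $\adj(AB)=\adj B\,\adj A$ (a polynomial identity, so no invertibility needed) and \eqref{jamson} applied to $\1 + s\,v\otimes v$, this gives $\adj(F + s\,v\otimes F^T v) = (1+s)\adj F - s\,\adj F\,(v\otimes v)$; differentiating at $s=0$ and invoking Definition \ref{defo1} yields \eqref{ramson} directly. The upshot is that your argument avoids both $F^{-1}$ and the closing density argument, and is shorter; the paper's approach arrives at the useful intermediate identity $\langle F, v\otimes F^T v\rangle\tau = \adj F\,\tau$ for $\tau\perp v$, but nothing elsewhere in the paper seems to require it in that form.
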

\begin{proof}
Let $F$ be invertible.  By \eqref{damson} and \eqref{jamson},
\begin{eqnarray*} \langle F, v \otimes F^{T}v \rangle & = & \adj (F + v \otimes F^{T}v) - \adj F \\
& = & \adj(F.[\1 + F^{-1}v \otimes F^{T}v]) - \adj F \\
& = &  \adj(\1 + F^{-1}v \otimes F^{T}v)\adj F - \adj F \\
& = & ((1+F^{-1}v \cdot F^{T}v)\1 -F^{-1}v \otimes F^{T}v)\adj F - \adj F \\
& = &  (F^{-1}v \cdot F^{T}v -F^{-1}v \otimes F^{T}v)\adj F.
\end{eqnarray*}
Let $\{v,\tau_{1},\tau_{2}\}$ be an orthonormal basis for $\mathbb{R}^{3}$ and write
\[ \1 = v \otimes v + \tau_{1} \otimes \tau_{1} + \tau_{2} \otimes \tau_{2}.\]
The identity \eqref{ramson} is then equivalent to 
\[ \adj F (\tau_{1} \otimes \tau_{1} + \tau_{2} \otimes \tau_{2}) = \langle F, v \otimes F^{T}v \rangle,\]
or
\begin{equation}\label{jettison} \adj F \tau = \langle F, v \otimes F^{T}v \rangle \tau\end{equation}
for any $\tau \in \mathbb{R}^{2}$ satisfying $\tau \cdot v =0$.   
%Premultiplying both sides by $F$ gives 
%\begin{equation}\label{jettison}\det F \tau = F \langle F, v \otimes F^{T}v \rangle \tau.\end{equation}
Now, by  the first few lines of the proof, 
\begin{eqnarray*}
 \langle F, v \otimes F^{T}v \rangle \tau & = & (F^{-1}v \cdot F^{T}v -F^{-1}v \otimes F^{T}v)\adj F \tau \\
& = & (F^{-1}v \cdot F^{T}v) \adj F \tau -  (F^{T}v \cdot \adj F \tau)  F^{-1} v\\
& = & FF^{-1}v \cdot v \ \adj F \tau. \\
& = & \adj F \tau,
\end{eqnarray*}
which is \eqref{jettison} in the case that $F$ is invertible.  Note that the second term of the second line of the calculation above vanishes because $F^{T}v \cdot \adj F \tau = (\det F) (F^{-T}F^{T}v \cdot \tau) =0$.   Finally, since the invertible matrices are dense in $\mathbb{R}^{3 \times 3}$, a simple approximation argument can be used to show that $\eqref{ramson}$ holds for all $F$, as claimed. 
\end{proof}

\begin{prop}\label{salix2}Let $w$ be an admissible map satisfying condition $(I)$, and let $\varphi$ be any smooth test function.  
Let $K$ be defined by 
\begin{equation}K(w)=\int_{B} \frac{x}{|x|^{2q -1}} \cdot G(w) \,dx. 
\end{equation}
Then the first variation of $K$ at $w$ in the direction $\varphi$ is given by 
\begin{equation}\label{EL1} \delta K(w)[\varphi]=\int_{B}  \frac{x}{|x|^{2q -1}} \cdot \left\{\langle \nabla w, \nabla \varphi \rangle \frac{w}{|w|^{3}}+ \frac{\adj \nabla w}{|w|^{3}}\left(\1 - 3 \bar{w} \otimes \bar{w}\right) \varphi \right\}\,dx.\end{equation} 
\end{prop}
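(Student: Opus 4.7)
My plan is to establish \eqref{EL1} by computing
$$\delta K(w)[\varphi] = \left.\frac{d}{dt}\right|_{t=0} K(w + t\varphi)$$
directly, pushing $\partial_t$ inside the integral and invoking dominated convergence to justify the interchange. Set
$$F_{t}(x) := \frac{x}{|x|^{2q-1}} \cdot \adj(\nabla w + t\nabla\varphi)\left(\frac{w + t\varphi}{|w + t\varphi|^{3}}\right).$$
The task then splits cleanly into a pointwise derivative computation and a uniform-in-$t$ domination bound.

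For the pointwise derivative, apply the product rule to the two $t$-dependent factors. By Definition~\ref{defo1} the adjugate piece contributes
$$\left.\partial_{t}\right|_{t=0}\adj(\nabla w + t\nabla\varphi) = \langle \nabla w,\nabla\varphi\rangle,$$
while a direct calculation using $\partial_{t}|_{t=0}|w+t\varphi|^{-3} = -3|w|^{-5}(w\cdot\varphi)$ gives
$$\left.\partial_{t}\right|_{t=0}\frac{w+t\varphi}{|w+t\varphi|^{3}} = \frac{\varphi}{|w|^{3}} - \frac{3(w\cdot\varphi)w}{|w|^{5}} = \frac{1}{|w|^{3}}\bigl(\1 - 3\bar{w}\otimes\bar{w}\bigr)\varphi.$$
Summing the two resulting contributions, multiplied by the appropriate cofactor, yields exactly the integrand on the right-hand side of \eqref{EL1}.

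The main obstacle is justifying the interchange of $\partial_{t}$ and $\int_{B}$, and this is where condition~(I) is essential: since $w(B\setminus N)\cap U = \emptyset$ for some open $U \ni 0$, we have $|w(x)| \geq c := \dist(0,\partial U) > 0$ for a.e.\ $x \in B$. As $\varphi$ is smooth, $|w(x)+t\varphi(x)| \geq c/2$ uniformly in $x$ for all $|t| \leq t_{0}$ sufficiently small, so the nonlinear factor $(w+t\varphi)/|w+t\varphi|^{3}$ and its $t$-derivative stay bounded. Expanding
$$\adj(\nabla w + t\nabla\varphi) = \adj\nabla w + t\langle\nabla w,\nabla\varphi\rangle + t^{2}\adj\nabla\varphi,$$
the difference quotient $t^{-1}(F_{t} - F_{0})$ is then bounded in modulus by
$$\frac{C_{\varphi}}{|x|^{2q-2}}\bigl(1 + |\nabla w(x)|^{2}\bigr)$$
uniformly for $|t|\leq t_{0}$. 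By H\"older's inequality with conjugate exponents $(q',q)$, together with the fact that $\||x|^{-(2q-2)}\|_{L^{q'}(B)}$ is finite precisely when $2q < 3$ --- the same borderline integrability already exploited in the proof of Proposition~\ref{salix1} --- this dominating function lies in $L^{1}(B)$. Dominated convergence then delivers \eqref{EL1}, completing the argument.
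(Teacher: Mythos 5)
Your proof is correct and follows essentially the same route as the paper's: differentiate under the integral sign, using the adjugate expansion from Definition~\ref{defo1} and the explicit derivative of $h(t)=\frac{w+t\varphi}{|w+t\varphi|^{3}}$ for the pointwise limit, and justify the interchange by dominated convergence using the uniform lower bound $|w+t\varphi|\geq c/2$ (from condition (I)) together with the $L^{1}$-integrability of $|x|^{2-2q}(1+|\nabla w|^{2})$ via H\"older for $2q<3$. The only cosmetic difference is that the paper organises the error term into the two pieces $A_{1}$ and $A_{2}$ of \eqref{snake}--\eqref{bark} and estimates them separately, whereas you bound the full difference quotient in one step; the underlying estimates are identical.
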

\begin{proof}For brevity, define $\zeta(x)=\frac{x}{|x|^{2q -1}}$.
Let the functional on the right-hand side of \eqref{EL1} be $L(w)[\varphi]$.   By definition,
\[ \delta K(w)[\varphi] = \partial_{\eps}\arrowvert_{\eps=0}\int_{B}  \zeta \cdot G(w+\eps \varphi) \,dx,\]
provided the right-hand side exists.   Now
\begin{eqnarray*} \frac{1}{\eps}(K(w+\eps \varphi) - K(w)) - L(w)[\varphi] & = & A_{1} + A_{2},\end{eqnarray*}
where
\begin{align}
\label{snake} A_{1}:&=&\int_{B} \zeta \cdot \left( \frac{1}{\eps}(\adj (\nabla w + \eps \nabla \varphi) - \adj \nabla w) - \langle \nabla \varphi , \nabla w \rangle \right)\,dx, \\ 
\label{bark} A_{2}:& =&\int_{B} \zeta \cdot (\adj \nabla w )\left(\frac{1}{\eps}(h(\eps)-h(0))-(\1 - 3 \bar{w} \otimes \bar{w})\varphi\right) \,dx,\end{align}   
and
\[ h(\eps): = \frac{w+\eps \varphi}{|w+\eps \varphi|^{3}}\]
whenever $|w+\eps \varphi| \neq 0$.  Note that $h(0)$ is well defined because $|w|$ is in particular bounded away from zero on $B$.   By continuity, we may further assume that for sufficiently small $\eps$ the function $|w+\eps \varphi| \geq c$ on $B$, and hence that $h(\eps)$ is well-defined on $B$.

To estimate the integral $A_{1}$, use Definition \ref{defo1} to write 
\[ \frac{1}{\eps}\left(\adj (\nabla w + \eps \nabla \varphi) - \adj \nabla w \right)- \langle \nabla \varphi , \nabla w \rangle = \eps \ \adj \nabla \varphi. \]
Thus
\[ |A_{1}| \leq C \int_{B} \eps |\zeta||\adj \nabla \varphi|\,dx\]
for some constant $C$ depending on $c$.   Hence $A_{1} \to 0$ as $\eps \to 0$.

To estimate the integral $A_{2}$ note that
\[h'(\eps)=\frac{1}{|w+\eps \varphi|^{3}}\left(\1-3\overline{(w+\eps \varphi)} \otimes \overline{(w+\eps \varphi)}\right)\varphi,\]
so that $|h'(\eps)| \leq C |\varphi|$.
It follows by the mean value theorem that  
\[\left|\frac{1}{\eps}\left(h(\eps)-h(0)\right)\right| \leq C |\varphi|\]
for all sufficiently small $\eps$ and almost every $x$ in $B$.   Thus
\[\frac{1}{\eps}(h(\eps)-h(0))-(\1 - 3 \bar{w} \otimes \bar{w})\varphi\]
is essentially bounded on $B$.   The function $\zeta \cdot \adj \nabla w$ lies in $L^{1}(B)$ using H\"{o}lder's inequality as in Proposition \ref{salix1}.  Thus, by dominated convergence, $\lim_{\eps \to 0}A_{2}=0$, and so \eqref{EL1} follows. \end{proof} 

Our primary aim is to show that $\delta K(w)[\varphi] =0$ for as large a class of $w$ and $\varphi$ as possible. %at least those $w$ and $\varphi$ described in the statement of Proposition \ref{salix2}.   
The proof of this hinges on an integration by parts which, since the functions involved may be of low regularity, must be handled carefully; this is the point of Lemma \ref{salix4} below, which is itself preceded by a technical result, Lemma \ref{salix3}, that  will be needed during the proof Lemma \ref{salix4}.   The functions $w^{(j)}$ appearing in Lemmas \ref{salix3} and \ref{salix4} are specially mollified versions of $w$;  their main features are summarised in the first step of the proof of Lemma \ref{salix4},  while full details 
are contained in Appendix \ref{appA}.

\begin{lem}\label{salix3} Let the function $w$ and the sequence $\{w^{(j)}\}_{j \in \mathbb{N}}$ belong to $W^{1,2q}(B;\mathbb{R}^{3})$, where $2< 2q < 3$.  For each $\eps>0$ define the integer $j(\eps)$ by \
\[j(\eps) = \left\lfloor \frac{|\ln \eps|}{\ln 2} \right\rfloor\]
and assume that, for some fixed $x_{0}$ in $B$ and all $\eps$ sufficiently small,
\begin{equation}\label{e:shosta}\int_{B(x_{0},2^{-j(\eps)})\setminus B(x_{0},2^{-(j(\eps)+1)})} \left|\nabla w^{(j(\eps))}\right|\,dx \leq \int_{B(x_{0},2^{1-j(\eps)})\setminus B(x_{0},2^{-(j(\eps)+2)})} |\nabla w|\,dx.\end{equation}
Then, for all sufficiently small $\delta >0$,
\[\essinf_{\eps \in (0,\delta)} \ \eps^{2-2q}\int_{\partial B(x_{0},\eps)} \left|\nabla w^{(j(\eps))}\right| \, d \sch^{2}  =0.\]
\end{lem}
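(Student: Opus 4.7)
\textbf{Proof plan for Lemma \ref{salix3}.} The overall strategy is a dyadic pigeonhole on spherical slices of $\nabla w^{(j)}$, combined with H\"older's inequality on the annuli where the comparison hypothesis \eqref{e:shosta} can be applied. For brevity write $A_{j}=B(x_{0},2^{-j})\setminus B(x_{0},2^{-(j+1)})$ and $\tilde A_{j}=B(x_{0},2^{1-j})\setminus B(x_{0},2^{-(j+2)})$, and recall that $j(\eps)=j$ exactly when $\eps\in(2^{-(j+1)},2^{-j}]$.

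First I would use polar coordinates (or the co-area formula) to write
\[
\int_{A_{j}}|\nabla w^{(j)}|\,dx \;=\; \int_{2^{-(j+1)}}^{2^{-j}}\Bigl(\int_{\partial B(x_{0},r)}|\nabla w^{(j)}|\,d\sch^{2}\Bigr)\,dr,
\]
so that on the dyadic interval $(2^{-(j+1)},2^{-j}]$ of length $2^{-(j+1)}$ the mean-value principle yields
\[
\essinf_{\eps\in(2^{-(j+1)},2^{-j})}\int_{\partial B(x_{0},\eps)}|\nabla w^{(j)}|\,d\sch^{2}\;\leq\; 2^{j+1}\int_{A_{j}}|\nabla w^{(j)}|\,dx.
\]
Invoking the standing hypothesis \eqref{e:shosta} with $j=j(\eps)$ replaces the right-hand side integral by the same expression with $w^{(j)}$ replaced by $w$ and $A_{j}$ by $\tilde A_{j}$.

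Next I would apply H\"older's inequality on $\tilde A_{j}$ with exponents $2q$ and $2q/(2q-1)$. Since $|\tilde A_{j}|\leq C\,2^{-3j}$, this gives
\[
\int_{\tilde A_{j}}|\nabla w|\,dx \;\leq\; C\,2^{-3j(2q-1)/(2q)}\Bigl(\int_{\tilde A_{j}}|\nabla w|^{2q}\,dx\Bigr)^{1/(2q)}.
\]
Combining the last two displays and multiplying through by $\eps^{2-2q}\leq 2^{(j+1)(2q-2)}$ (recall $2q>2$, so the exponent $2-2q$ is negative), I obtain, for some $\eps_{j}\in(2^{-(j+1)},2^{-j}]$,
\[
\eps_{j}^{\,2-2q}\int_{\partial B(x_{0},\eps_{j})}|\nabla w^{(j)}|\,d\sch^{2}\;\leq\; C\,2^{(2q-1)\bigl[1+j(2q-3)/(2q)\bigr]}\,\|\nabla w\|_{L^{2q}(\tilde A_{j})}.
\]
Here is where the hypothesis $2q<3$ enters decisively: the bracketed exponent is negative for large $j$, so the prefactor decays geometrically; moreover $\|\nabla w\|_{L^{2q}(\tilde A_{j})}$ is uniformly bounded (and in fact tends to $0$ by absolute continuity of the Lebesgue integral, since $|\tilde A_{j}|\to 0$).

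Finally I would let $j\to\infty$: for any fixed $\delta>0$ and all sufficiently large $j$, the admissible $\eps_{j}\in(2^{-(j+1)},2^{-j}]$ lies in $(0,\delta)$, and the bound above shows that the quantity inside the essinf can be made arbitrarily small along the sequence $\eps_{j}\to 0$. This forces the essinf on $(0,\delta)$ to be $0$, which is the conclusion. The main obstacle to watch in executing this is simply the bookkeeping of the exponents of $2$; the argument works precisely because $2-2q$ and $(2q-3)/(2q)$ are both negative, i.e.\ because $2<2q<3$.
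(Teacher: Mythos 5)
Your proof is correct and proceeds by essentially the same estimates as the paper: dyadic decomposition around $x_{0}$, the comparison hypothesis \eqref{e:shosta} to pass from $w^{(j)}$ to $w$, H\"older's inequality on the enlarged annulus $\tilde A_{j}$ to gain a factor $|\tilde A_{j}|^{(2q-1)/(2q)} \sim 2^{-3j(2q-1)/(2q)}$, and the sign of $2q-3$ to make the exponent of $2$ tend to $-\infty$. The only difference is presentational: the paper argues by contradiction (assume the essinf is bounded below by $M>0$ on a.e. $\eps \in (0,\delta_{0})$, integrate that lower bound over each dyadic interval, and derive $M \leq C\,2^{-j\theta}$ with $\theta>0$), whereas you argue directly by using the mean-value bound $\essinf \leq$ average on each dyadic interval and letting $j\to\infty$; the exponent bookkeeping is identical and both land on $\theta=(2q-1)(3/(2q)-1)>0$.
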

\begin{proof} Suppose for a contradiction that the result is false.  Then there is a positive constant $M$, say, and $\delta_{0} >0$ such that
\begin{equation}\label{dvorak1}  M \leq (\eps')^{2-2q}\int_{\partial B(x_{0},\eps')} |\nabla w^{(j(\eps'))}| \, d \sch^{2} \ \ \ \quad \textrm{for a.e. }\ \eps' \in  (0,\delta_{0}). \end{equation} 
Let $\eps > 0$ and note that if $\eps'$ belongs to the interval $\omega_{\eps}:=(2^{1-j(\eps)},2^{-j(\eps)}]$ then $j(\eps')=j(\eps)$; therefore the function $j(\cdot)$ is piecewise constant and is, in particular, independent of $\eps$ on the interval $\omega_{\eps}$.  
Integrating \eqref{dvorak1} with respect to $\eps'$ over $\omega_{\eps}$ gives, with $j=j(\eps)$,
\begin{equation*}\begin{split}\frac{M}{2q-1}&(2^{(2q-1)}-1)(2^{-(j+1)})^{(2q-1)}  \leq   \int_{B(x_{0},2^{-j})\setminus B(x_{0},2^{-(j+1)})} \left|\nabla w^{(j(\eps))}\right| \,dx  \\
&   \quad \quad \quad \quad \leq C\left(\int_{B(x_{0},2^{1-j})\setminus B(x_{0},2^{-(j+2)})}|\nabla w|^{2q}\,dx\right)^{\frac{1}{2q}} (2^{-3j})^{\frac{2q-1}{2q}},\end{split}\end{equation*}
where we have used H\"{o}lder's inequality in the last line above, together with the hypothesis \eqref{e:shosta}.  The term involving $|\nabla w|$ is bounded above by $||\nabla w||_{L^{2q}(B)}$; simplifying the other terms gives
\begin{equation*} M \leq  C (2^{-j})^{\left(\frac{3}{2q}-1\right)(2q-1)}\end{equation*}
for a positive constant $C$ independent of $j(\eps)$. 
The right-hand side of this inequality is proportional to $2^{-j(\eps) \theta}$, where $\theta = (3/2q - 1)(2q-1)$ is positive and where $j(\eps) \to \infty$ as $\eps \to 0$:  it can thus be made arbitrarily small by letting $\eps \to  0$, which is a contradiction.
\end{proof}

Note that the hypotheses of the next result restrict the class of admissible maps $w$ to which we can safely apply integration by parts.

\begin{lem}\label{salix4}Let $w \in W^{1,2q}(B;\mathbb{R}^{3})$ and let $\zeta(x)=\frac{x|x|}{|x|^{2q}}$ for non-zero $x$ in $B$.   Assume that $2 < 2q <3$ and that $w$ satisfies the following strengthened version of condition (I):
\begin{itemize}\item[($I^{\prime}$)] \ \ $\exists \ x_{0} \in B$ and $\delta, \tau_{1}, \tau_{2}>0$ such that
\begin{itemize}\item[($I^{\prime}a$)]$w\arrowvert_{B\setminus \{x_{0}\}}$ is continuous; 
\item[($I^{\prime}b$)] $w(B'(x_{0},\delta)) \subset B(0,\tau_{2})\setminus B(0,\tau_{1}).$
\end{itemize}
\end{itemize} 
Then, for all $\varphi \in C_{c}^{1}(B,\mathbb{R}^{3})$,
\begin{equation}\label{bruchner7}
\int_{B} \zeta \cdot \langle \nabla w, \nabla  \varphi \rangle \frac{w}{|w|^{3}} \,dx =  \int_{B} \frac{\zeta}{|w|^{3}} \cdot \left\{ 2 (\adj \nabla w)\,\varphi - 3 \langle \bar{w} \otimes \nabla w^{T}\bar{w}, \nabla w\rangle \varphi\right\}\,dx. \end{equation}  
\end{lem}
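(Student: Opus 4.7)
The plan is to prove the identity first in the smooth case, with $w \in C^2(B)$ and $|w|\geq c > 0$, by a direct integration by parts that exploits the null-Lagrangian structure of $\adj \nabla w$. The general case then follows by applying this smooth identity to the mollified sequence $\{w^{(j)}\}$ of Appendix \ref{appA} and passing to the limit, with Lemma \ref{salix3} supplying the control on contributions near the discontinuity point $x_0$.

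In the smooth case, I would expand $\langle \nabla w, \nabla \varphi \rangle_{ib} = \veps^{jab}\veps^{icd}\partial_c w^a \partial_d \varphi^b$ (Definition \ref{defo1}) and integrate by parts in $\partial_d$; the singularity of $\zeta$ at the origin is handled by a cutoff $\chi_\eps$ on $B(0,\eps)$ whose error, bounded by $O(\eps^{4-2q})$ when $|\nabla w|$ is bounded, vanishes as $\eps\to 0$. After IBP, $\partial_d$ distributes among three factors. The terms obtained by differentiating $\zeta_i = x^i/|x|^{2q-1}$ all vanish by antisymmetry/symmetry cancellations: the $\delta^{id}/|x|^{2q-1}$ piece collapses $\veps^{icd}\delta^{id} = \veps^{dcd} = 0$, while the $-(2q-1)x^i x^d/|x|^{2q+1}$ piece pairs the symmetric tensor $x^i x^d$ against the antisymmetric $\veps^{icd}$. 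The term from differentiating $\partial_c w^a$ vanishes by $\veps^{icd}\partial_d \partial_c w^a = 0$, the same null-Lagrangian identity underlying Piola. The remaining terms come from $\partial_d(w^j/|w|^3) = |w|^{-3}(\delta^{jk} - 3 \bar{w}^j \bar{w}^k) \partial_d w^k$: the $\delta^{jk}$ piece collapses via $\veps^{jab}\veps^{icd}\partial_c w^a \partial_d w^j = -2(\adj \nabla w)_{ib}$ into the first RHS term $\frac{2}{|w|^3}\zeta\cdot(\adj \nabla w)\varphi$, while the $\bar w \otimes \bar w$ piece, after a cyclic relabeling of the $\veps^{jab}$ indices, matches the second RHS term $-\frac{3}{|w|^3}\zeta\cdot\langle\bar w\otimes(\nabla w)^T\bar w, \nabla w\rangle\varphi$.

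For the passage to the general case, apply the smooth identity to each $w^{(j)}$, whose key features (from the first step of the proof and Appendix \ref{appA}) are $w^{(j)} \in C^\infty(B)$, $|w^{(j)}|\geq c > 0$ uniformly in $j$, strong convergence $\nabla w^{(j)} \to \nabla w$ in $L^{2q}(B)$, agreement with $w$ outside $B(x_0, 2^{-j})$, and the gradient estimate \eqref{e:shosta}. Outside a fixed neighborhood of $x_0$, the strong $L^{2q}$ convergence of $\nabla w^{(j)}$ gives strong $L^q$ convergence of the quadratic pieces $\adj \nabla w^{(j)}$ and $\langle \bar w^{(j)}\otimes (\nabla w^{(j)})^T \bar w^{(j)}, \nabla w^{(j)}\rangle$, and dominated convergence, combined with $\zeta \in L^{q'}_{\mathrm{loc}}$ and the uniform lower bound on $|w^{(j)}|$, yields the limit on both sides. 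For the shrinking neighborhoods of $x_0$, Lemma \ref{salix3} supplies radii $\eps_k\to 0$ along which $\eps_k^{2-2q}\int_{\partial B(x_0,\eps_k)}|\nabla w^{(j(\eps_k))}|\, d\sch^2 \to 0$; a co-area argument then shows that the bulk integrals on $B(x_0, \eps_k)$ of both sides of \eqref{bruchner7} are negligible in the limit.

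The principal obstacle is controlling the quadratic terms in $\nabla w^{(j)}$ on the RHS near $x_0$: the mollifications must be permitted to deviate from $w$ inside the shrinking ball where $w$ may fail to be smooth, so a surface-integral control is unavoidable. Lemma \ref{salix3}, with its essential-infimum extraction of good radii via the bound \eqref{e:shosta}, is the technical device built precisely to supply that control.
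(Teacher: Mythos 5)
Your integration-by-parts calculation in the first paragraph is sound and matches the paper's decomposition into terms $T_{1},\dots,T_{4}$ in \eqref{e:bruchner8}, including the symmetry/antisymmetry cancellations and the identification of the two surviving terms. The two-step architecture (smooth case, then pass to the limit along a mollified sequence controlled by Lemma \ref{salix3}) is the right idea and is, in outline, the paper's own.

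However, the reduction to the smooth case rests on a false, and internally inconsistent, description of the mollified sequence. You claim $w^{(j)} \in C^{\infty}(B)$ and ``agreement with $w$ outside $B(x_{0},2^{-j})$.'' Both assertions are backwards: by Lemma \ref{a:appendixa}, $w^{(j)}$ equals $w$ \emph{inside} $B(x_{0},2^{-(j+2)})$ (so it inherits the discontinuity at $x_{0}$) and equals a mollification $w_{l(j)}$ \emph{outside} $B(x_{0},2^{-(j+1)})$; it is smooth only on $B\setminus B(x_{0},2^{-(j+1)})$, not on $B$. Your two claims also contradict each other --- if $w^{(j)}=w$ outside $B(x_{0},2^{-j})$ then $w^{(j)}$ cannot be smooth there, since $w$ is merely $W^{1,2q}$. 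This matters for the logic of the whole proof: if $w^{(j)}$ really were globally $C^{\infty}$ with $|w^{(j)}|\geq c>0$, your smooth identity would apply on all of $B$ with no boundary terms, the passage to the limit would follow from the $W^{1,2q}$ convergence alone, and Lemma \ref{salix3} would be superfluous. Your final paragraph, which correctly identifies the need for a surface-integral control at $x_{0}$ and names Lemma \ref{salix3} as the tool, therefore contradicts the middle paragraph. The fix is exactly what the paper does: perform the integration by parts for $w^{(j)}$ on $B\setminus B(x_{0},\eps)$, where it \emph{is} smooth, accept the resulting flux term on $\partial B(x_{0},\eps)$, and use Lemma \ref{salix3} to send it to zero along a subsequence of radii; the bulk integrals on both sides then converge by the continuity-in-$W^{1,2q}$ argument (which you sketch plausibly, though the paper also needs the local uniform convergence of $w^{(j)}$ to $w$ away from $x_{0}$ to handle the difference $|w^{(j)}|^{-3}-|w|^{-3}$, a step you omit). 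As written, the proposal does not go through; once the description of $w^{(j)}$ is corrected and the punctured-domain IBP is made explicit, it coincides with the paper's proof.
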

\begin{rem}\emph{The spirit of condition (I') is that $w$ has just one discontinuity at $x_{0}$ which maps a punctured ball about $x_{0}$ to an annular region about $0$ in the target domain, and is otherwise continuous.}\end{rem} 
 
% Note that condition $(I')$, together with the hypothesis that $w$ be invertible almost everywhere, does not rule out the possibility that there is another point $x_{1} \neq x_{0}$ with the same properties $(I'a)$ and $(I'b)$ as $x_{0}$:  the respective cavities could `nest', for example.

\begin{rem}\emph{The result continues to hold, and its proof requires only minor adjustments, if condition (I') is weakened to allow finitely many discontinuities.}\end{rem}

 \begin{proof} The proof of the lemma is in two steps.
 
\vspace{1mm}
 
\noindent \textbf{Step 1:}  (Mollification and continuity with respect to strong $W^{1,2q}$ convergence.)    We mollify $w$ using the technique set out in Appendix A.  The result is a sequence of functions $w^{(j)}$, say, with the properties that:
\begin{itemize}\item[(i)] for each $\eps>0$ and all sufficiently large $j$, $w^{(j)}$ is smooth on $B\setminus B'(x_{0},\eps/2)$;
\item[(ii)] each $w^{(j)}$ satisfies $|w^{(j)}| \geq \frac{\tau_{0}}{2}$ almost everywhere on $B$, and
\item[(iii)] the sequence $w^{(j)} \to w$ in $W^{1,2q}(B;\mathbb{R}^{3})$ as $j \to \infty$.
\end{itemize}
We claim that each side of \eqref{bruchner7} is continuous with respect to strong convergence in $W^{1,2q}$.   The left-hand side is the functional 
\[ F_{1}(w) :=  \int_{B} \zeta \cdot \langle \nabla w, \nabla  \varphi \rangle \frac{w}{|w|^{3}} \,dx \]
which, owing to its linear dependence on $\nabla w$, is easier to handle than both quadratic terms on the right.   For later use we define these quadratic terms by 
\[ F_{2}(w) := \int_{B} \frac{\zeta}{|w|^{3}} \cdot (\adj \nabla w)\,\varphi \,dx,\]
and
\[F_{3}(w):= \int_{B} \langle \nabla w^{T}\bar{w} \otimes \bar{w}, \nabla w\rangle \varphi \,dx.\]

\vspace{2mm}
\noindent
\textbf{Continuity of $F_{1}$:}  Consider
\begin{eqnarray*} \left|\zeta \cdot \langle \nabla w^{(j)}, \nabla  \varphi \rangle \frac{w^{(j)}}{|w^{(j)}|^{3}} - \zeta \cdot \langle \nabla w, \nabla  \varphi \rangle \frac{w}{|w|^{3}}\right| & \leq & C|\zeta|\left|\langle \nabla w^{(j)}, \nabla  \varphi \rangle \right||w-w^{(j)}| \\
& + & C|\zeta|\left|\langle \nabla w^{(j)} - \nabla w , \nabla \varphi\rangle \right|,
\end{eqnarray*}
where $C$ is a positive constant which depends on the radii $\tau_{1}>\tau_{0}$ and on $\varphi$ but not on $j$.  To control the cubic terms we have used the fact that, since both $w$ and $w^{(j)}$ are bounded away from zero (by condition (I')), 
\begin{equation}\label{sorbus} \left|\frac{w^{(j)}}{|w^{(j)}|^{3}}-\frac{w}{|w|^{3}}\right| \leq |w^{(j)}-w|(|w|^{3}+C|w|)\end{equation}
for some constant $C$ depending only on the radius $\tau_{0}$.  Then notice that condition (I') further implies that $w$ (and hence $w^{(j)}$) is essentially bounded on $B$, so that the stated upper bound holds.   It follows from a version of H\"{o}lder's inequality that
\[ \int_{B} |\zeta|\left|\langle \nabla w^{(j)}, \nabla  \varphi \rangle \right||w-w^{(j)}| \,dx \leq C ||\zeta||_{r}||\nabla w||_{2q}||w-w^{(j)}||_{(2q)^{\ast}},\]
where $r=3q/(4q-3)$ is such that $||\zeta||_{r} < \infty$.   Sobolev's embedding theorem (see, e.g., \cite[Theorem 7.10]{GT})) implies that $||w-w^{(j)}||_{(2q)^{\ast}} \to 0$.   The second term in the estimate of $F_{1}$ can be dealt with using the fact that $\nabla w^{(j)} \to \nabla w$ in $W^{1,2q}$ norm.   Hence $F_{1}(w^{(j)}) \to F_{1}(w)$ as $j \to \infty$.

%is continuous with respect to strong $W^{1, 2q}$ convergence.  The right-hand side of \eqref{bruchner7} contains two terms, each of which is also continuous with respect to $W^{1,2q}$ convergence.   We prove this only for the first term;  the second can be handled in a similar way.    
\vspace{2mm}
\noindent
\textbf{Continuity of $F_{2}$:}  

Let $\eta > 0$ be arbitrary and consider the inequality
\begin{eqnarray*} \left|\zeta \cdot \frac{(\adj \nabla w^{(j)}) \varphi}{|w^{(j)}|^{3}} - \zeta \cdot \frac{(\adj \nabla w) \ \varphi}{|w|^{3}} \right| & \leq & ||w^{(j)}|^{-3}-|w|^{-3}|\left|\zeta \cdot (\adj \nabla w^{(j)}) \varphi\right| \\
& + & \left|\frac{\zeta}{|w|^{3}} \cdot \left(\adj \nabla w^{(j)} - \adj \nabla w \right) \varphi\right|,\end{eqnarray*}
which applies to the integrand of $F_{2}(w^{(j)})-F_{2}(w)$.
The first term on the right-hand side can be dealt with as follows.   Let $\eps > 0$ and note that by elementary estimates

{\small{\[\int_{B(x_{0},\eps)}||w^{(j)}|^{-3}-|w|^{-3}|\left|\zeta \cdot (\adj \nabla w^{(j)}) \varphi\right|\,dx \leq C ||\zeta||_{L^{q}(B(x_{0},\eps))}(||\nabla w^{(j)}||_{L^{2q}(B)})^{2}.\]}}
Since $\nabla w^{(j)} \to \nabla w$ in $L^{2q}$, $||\nabla w^{(j)}||$ is bounded above independently of $j$, and, 
since $\zeta \in L^{q'}(B)$, $||\zeta||_{L^{q'}(B(x_{0},\eps))} < \eta$ provided $\eps$ is sufficiently small.   Hence 
\[\int_{B(x_{0},\eps)}||w^{(j)}|^{-3}-|w|^{-3}|\left|\zeta \cdot (\adj \nabla w^{(j)}) \varphi\right|\,dx \leq C \eta\] 
 for sufficiently small $\eps$.  

It remains to estimate the integral over $B \setminus B(x_{0},\eps)$.   We use the continuity of $w$ away from $x_{0}$ to control the difference $||w^{(j)}|^{-3}-|w|^{-3}|$.  In fact, by the preceding argument and inequality \eqref{sorbus}
\[\int_{B\setminus B(x_{0},\eps)}||w^{(j)}|^{-3}-|w|^{-3}|\left|\zeta \cdot (\adj \nabla w^{(j)}) \varphi\right|\,dx \leq C \max_{B\setminus B(x_{0},\eps)}(|w^{(j)}|-|w|)\] 
for some constant $C$ depending on $w$ and $q$ but not on $j$ or $\eps$.  By the construction of $w^{(j)}$ given in Lemma \ref{a:appendixa}, we may assume $\max_{B\setminus B(x_{0},\eps)}(|w^{(j)}|-|w|) < \eta$ if $\eps$ is small enough.  Combining the estimates over $B(x_{0},\eps)$ and its complement in $B$ finally yields
\[\int_{B}||w^{(j)}|^{-3}-|w|^{-3}|\left|\zeta \cdot (\adj \nabla w^{(j)}) \varphi\right|\,dx < C\eta \ \textrm{for all sufficiently small} \ \eps.\]

%Secondly, bearing in mind the estimates on the cubic terms given above, we obtain 
%{\small{\[\int_{B\setminus B(x_{0},\eps)} \left|(|w^{(j)}|^{-3}-|w|^{-3})\zeta \cdot (\adj \nabla w^{(j)}) \varphi\right| \,dx \leq C \max_{B\setminus B(x_{0},\eps)}|w^{(j)}-w| ||\nabla w ||_{2q}||\zeta||_{q'}.\]}}
%Recalling  we may take $l(j)$ so large that the right-hand side of the line above is controlled by $\eta/2$.  It follows that the first term in $|F_{2}(w^{(j)})-F_{2}(w)|$ is controlled by $\eta$.    

The second term in \eqref{sorbus} can be estimated by writing
\[\adj \nabla w^{(j)} - \adj \nabla w = \frac{1}{2}\langle \nabla w^{(j)} - \nabla w, \nabla w^{(j)} + \nabla w \rangle,\]
so that
{\small{\[ \int_{B}\left|\frac{\zeta}{|w|^{3}}\cdot (\adj \nabla w^{(j)} - \adj \nabla w)\varphi\right|\,dx \leq C||\zeta||_{q'}||\nabla w^{(j)} + \nabla w||_{2q}||\nabla w^{(j)} - \nabla w||_{2q}.\]}} 
The right-hand side of this inequality tends to zero by the strong convergence of $\nabla w^{(j)}$ to $\nabla w$ in $L^{2q}$.  In summary, $F_{2}(w^{(j)}) \to F_{2}(w)$ as $j \to \infty$.   A similar argument works for $F_{3}$. 

\vspace{1mm}
\noindent\textbf{Step 2:} (Proof of \eqref{bruchner7} for `smooth w'.)
Let $\eps > 0$ and take $j$ so large that $w^{(j)}$ is smooth on $B\setminus B(x_{0},\eps/2)$, as in the proof of Lemma \ref{salix3}.   Consider
%{\small{
%\setlength\arraycolsep{2pt}
%\begin{eqnarray*}
%{\small{\begin{multline}
{\small{\begin{equation}\label{e:bruchner8}\begin{split}\int_{B\setminus B(x_{0},\eps)} \zeta \cdot \langle \nabla \wl
, \nabla  \varphi \rangle & \frac{\wl}{|\wl|^{3}} \,dx  = 
  -\int_{\partial B(x_{0},\eps)}\zeta_{i}\veps^{sab}\veps^{icd}\wl_{a,c}|\wl|^{-3}\wl_{s}\varphi_{b}\bar{x}_{d} \,d\sch^{2} \\ 
& \quad -  \int_{B\setminus B(x_{0},\eps)}\zeta_{i,d}\veps^{sab}\veps^{icd}\wl_{a,c}\wl_{s}|{\wl}|^{-3}\varphi_{b}\,dx \\
& \quad -  \int_{B\setminus B(x_{0},\eps)}\zeta_{i}\veps^{sab}\veps^{icd}\wl_{a,cd}\wl_{s}|\wl|^{-3}\varphi_{b} \,dx \\
& \quad - \int_{B\setminus B(x_{0},\eps)}\zeta_{i}\veps^{sab}\veps^{icd}\wl_{a,c}\left(\wl_{s}|{\wl}|^{-3}\right)_{,d}\varphi_{b}\,dx \\& \quad \quad \quad \quad  =:  T_{1}+T_{2}+T_{3}+T_{4}\end{split}\end{equation}}}
\noindent 
Let us deal with each term of the right-hand side of \eqref{e:bruchner8} in turn.   $T_{1}$ can be estimated by
\[ \left|\int_{\partial B(x_{0},\eps)}\zeta_{i}\veps^{sab}\veps^{icd}\wl_{a,c}|\wl|^{-3}|\wl_{s}|\varphi_{b}\bar{x}_{d} \,d\sch^{2}\right| \leq C \eps^{2-2q}\int_{\partial B(x_{0},\eps)} |\nabla \wl| \,d\sch^{2}.\]
By Lemma \ref{salix3}, there is a sequence $\eps_{m} \to 0$ such that the right-hand side of this estimate converges to zero as $m \to \infty$.  Hence $T_{1}=0$. The second term, $T_{2}$, is zero because $(\nabla \zeta)_{id}$ is symmetric while $\eps^{icd}$ is antisymmetric under the transposition $i \mapsto d$, so that $\zeta_{i,d}\eps^{icd}$ is zero almost everywhere.  Similarly, $\eps^{icd}w_{a,cd}$ is zero a.e., rendering $T_{3}=0$.   $T_{4}$ can be calculated (using, among other facts, $\eps^{sab}=-\eps^{bas}$) as follows:
\begin{equation*}\begin{split}T_{4} &= \int_{B\setminus B(x_{0},\eps)}\zeta_{i}\veps^{bas}\veps^{icd}\wl_{a,c}\wl_{s,d}
|\wl|^{-3}\varphi_{b} \,dx \\
& - 3 \int_{B\setminus B(x_{0},\eps)} \zeta_{i}\veps^{baj}\veps^{icd}\wl_{a,c}\left(\bar{w}\otimes \nabla w^{T}\bar{w}\right)_{jd}|\wl|^{-3}\varphi_{b}\,dx \\ & =
\int_{B\setminus B(x_{0},\eps)}  \zeta |\wl|^{-3} \cdot \left(2 \left(\adj \nabla \wl\right)  -3 \langle \nabla \wl, \bar{w}\otimes \nabla w^{T}\bar{w} \rangle \right)\varphi \,dx
\end{split}\end{equation*}
Letting $\eps \to 0$ , and applying the continuity of $F_{1},F_{2},$ and $F_{3}$ with respect to strong convergence in $W^{1,2q}$, we conclude that \eqref{bruchner7} holds. 
\end{proof}

\begin{prop}\label{salix5} Let $K$ be the functional defined in \eqref{K} and suppose $w$ satisfies condition $(I')$ for some $x_{0} \in B$.   Then
\[\delta K(w)[\varphi]=0.\] 
\end{prop}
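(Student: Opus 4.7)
The plan is to start from the expression for the first variation already obtained in Proposition \ref{salix2},
\[\delta K(w)[\varphi]=\int_{B}  \zeta \cdot \left\{\langle \nabla w, \nabla \varphi \rangle \frac{w}{|w|^{3}}+ \frac{\adj \nabla w}{|w|^{3}}\left(\1 - 3 \bar{w} \otimes \bar{w}\right) \varphi \right\}\,dx,\]
with $\zeta(x)=x/|x|^{2q-1}$, and then eliminate the $\nabla\varphi$ from the first term using Lemma \ref{salix4}. Note that the derivation of $\delta K(w)[\varphi]$ in Proposition \ref{salix2} was carried out only under the original condition (I), so I would first observe that the proof goes through verbatim under the stronger condition (I'), since (I') supplies exactly what was used there: $|w|$ is bounded below by $\tau_1>0$ on a neighbourhood of the (single) discontinuity and $w$ is continuous elsewhere, which is enough to justify differentiation under the integral and the dominated-convergence argument in $A_{2}$.

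Once Lemma \ref{salix4} is applied to the first term, the expression for $\delta K(w)[\varphi]$ becomes
\[\int_{B} \frac{\zeta}{|w|^{3}} \cdot \Bigl\{2(\adj \nabla w)\varphi - 3\langle \bar{w} \otimes \nabla w^{T}\bar{w}, \nabla w\rangle\varphi + (\adj \nabla w)(\1 - 3 \bar{w}\otimes\bar{w})\varphi\Bigr\}\,dx.\]
Expanding $(\adj \nabla w)(\1-3\bar w\otimes\bar w) = \adj\nabla w - 3(\adj\nabla w)\bar w\otimes\bar w$, the integrand simplifies to
\[\frac{3\zeta}{|w|^{3}}\cdot\Bigl\{\adj\nabla w - (\adj\nabla w)\,\bar w\otimes\bar w - \langle \bar w\otimes\nabla w^{T}\bar w,\nabla w\rangle\Bigr\}\varphi.\]

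The key algebraic step is now to invoke Lemma \ref{l:identity} with $F=\nabla w$ and $v=\bar w \in \mathbb{S}^2$, which asserts
\[\adj\nabla w = (\adj\nabla w)\,\bar w\otimes\bar w + \langle \nabla w, \bar w\otimes \nabla w^{T}\bar w\rangle.\]
Since $\langle\cdot,\cdot\rangle$ is symmetric in its arguments (noted just after Definition \ref{defo1}), the three terms in the braces cancel identically, so the integrand vanishes pointwise and $\delta K(w)[\varphi]=0$. The only genuinely non-routine step in the argument is the integration-by-parts identity of Lemma \ref{salix4}, which is the source of the factors $2$ and $-3$ that make the cancellation work; everything else is Proposition \ref{salix2} plus the algebraic identity in Lemma \ref{l:identity}.
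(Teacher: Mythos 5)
Your proposal is correct and follows the paper's own proof essentially verbatim: substitute the integration-by-parts identity of Lemma \ref{salix4} into the variation from Proposition \ref{salix2}, collect terms, and cancel using Lemma \ref{l:identity} with $F=\nabla w$, $v=\bar w$ together with the symmetry of $\langle\cdot,\cdot\rangle$. Your preliminary remark that Proposition \ref{salix2} was stated under condition (I) but applies equally under the stronger condition (I$'$) is a reasonable clarification; the paper leaves this transition implicit, relying on the fact that (I$'$) still guarantees $|w|$ is bounded away from zero on $B$.
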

\begin{proof}By Proposition \ref{salix2},
\[\delta K(w)[\varphi]= \int_{B}  \zeta \cdot \left\{\langle \nabla w, \nabla \varphi \rangle \frac{w}{|w|^{3}}+ \frac{\adj \nabla w}{|w|^{3}}\left(\1 - 3 \bar{w} \otimes \bar{w}\right) \varphi \right\}\,dx.\]
Using Lemma \ref{salix4}, we rewrite this as
\begin{equation*}\begin{split}
\delta K(w)[\varphi]&=\int_{B} \frac{\zeta}{|w|^{3}} \cdot \left\{2\, \adj \nabla w \, \varphi - 3 \langle \nabla w, \bar{w} \otimes \nabla w^{T} \bar{w}\rangle \right.\\
&\quad \quad \quad \quad \quad + \left.\adj \nabla w \, \varphi - 3 \, \adj \nabla w \ \bar{w} \otimes \bar{w} \ \varphi \right\}\,dx\\
&= 3 \int_{B} \frac{\zeta}{|w|^{3}} \cdot \left\{\adj \nabla w  \, (\1 - \bar{w} \otimes \bar{w}) - \langle \nabla w, \bar{w} \otimes \nabla w^{T} \bar{w}\rangle \right\}\varphi\,dx
\end{split}\end{equation*}
Finally, by applying Lemma \ref{l:identity} with $F:=\nabla w$ and $v:=\bar{w}$, we see that the integrand of the last line above vanishes almost everywhere.  Hence $\delta K(w)[\varphi]=0$.
\end{proof}

The preceding calculations applied only to those $w$ with a single discontinuity in $B$, i.e., to $w$ satisfying condition (I').  In order to extend the preceding analysis to maps $w$ which vanish somewhere, we now introduce a modified version, $\wc$, of $w$:

\begin{defn}\label{d:wcirc} Let $w$ be an admissible map satisfying condition (II) of Definition \ref{defadmiss}.   Let $x_{0}$ be such that $w(x_{0})=0$ and let $\tau > 0$.     Let the sets $V_{\tau}$ be given by
\[ V_{\tau} = w^{-1}(B(0,\tau)).\]
Define the family of maps $\wc(\cdot;\tau)$ by
\begin{displaymath} \wc(x;\tau) = \left\{\begin{array}{ll} \tau \frac{w(x)}{|w(x)|} & \textrm{if} \ \ x \in V_{\tau} \setminus\{x_{0}\} \\
w(x) & \textrm{if} \  \ x \in B\setminus \overline{V_{\tau}}.\end{array}\right.
\end{displaymath}
\end{defn}

\begin{rem}\label{r:smalltau}\emph{We remark that the sets $V_{\tau}$ are diffeomorphic to balls for small $\tau$.   This is because, by $(II)$,  $w:U \mapsto w(U)$ is a diffeomorphism for some neighbourhood $U$ of $x_{0}$ in $B$.  Thus if we take $\tau$ small enough that $B(0,\tau) \subset w(U)$, it follows that $V_{\tau}$ is the image of a ball under the diffeomorphism $w^{-1}$.}\end{rem}  

\noindent Thus $\overset{\circ}{w}$ opens a `cavity' about zero in the target domain. The next result shows that the functionals $K$ and $I$ do not distinguish between $w$ and $\wc$.

\begin{lem}\label{l:circapprox} Suppose that $w$ satisfies condition $(II)$ and let $\wc(\cdot,\tau)$ be as in Definition \ref{d:wcirc} above, with $\tau$ small enough for the argument of Remark \ref{r:smalltau} to apply.   
Then 
\begin{equation}\label{e:mrincredible}
G(\wc(\cdot;\tau)) = G(w) \ \ \textrm{a.e. in} \ B. \end{equation}
In particular, \[ K(\wc(\cdot,\tau)) = K(w)\]
and \[I(\wc)=I(w).\]
\end{lem}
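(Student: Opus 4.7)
The plan is to establish the pointwise identity $G(\wc(\cdot;\tau))(x)=G(w)(x)$ for almost every $x\in B$, from which both claimed equalities of functionals follow immediately by integration against $x/|x|^{2q-1}$ and by taking the $q$-th power, respectively. Outside $\overline{V_\tau}$ the map $\wc$ agrees with $w$, so there is nothing to do; and $\partial V_{\tau}$ is Lebesgue-null, since it is the preimage under the local diffeomorphism $w$ (see Remark \ref{r:smalltau}) of the sphere $\partial B(0,\tau)$. Hence the entire argument reduces to verifying $G(\wc)=G(w)$ a.e.\ on $V_\tau\setminus\{x_0\}$.

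On $V_\tau\setminus\{x_0\}$ we have $\wc=\tau\bar w$ with $|\wc|=\tau$ and $\bar{\wc}=\bar w$. Because $V_\tau\subset U$ and $w|_U$ is a diffeomorphism, $w$ is $C^1$ and satisfies $|w|>0$ on $V_\tau\setminus\{x_0\}$, so the chain rule applies. A direct computation using $\nabla|w|=(\nabla w)^T\bar w$ gives
\begin{equation*}
\nabla\wc=\frac{\tau}{|w|}\bigl(\1-\bar w\otimes\bar w\bigr)\nabla w.
\end{equation*}
The main identity I would then exploit is the multiplicativity $\adj(AB)=\adj B\,\adj A$, together with $\adj(cA)=c^{2}\adj A$ in dimension three, to obtain
\begin{equation*}
\adj\nabla\wc=\frac{\tau^{2}}{|w|^{2}}\adj(\nabla w)\,\adj\bigl(\1-\bar w\otimes\bar w\bigr).
\end{equation*}

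The key algebraic step is to evaluate $\adj(\1-\bar w\otimes\bar w)$ using identity \eqref{jamson} with $u=-\bar w$ and $v=\bar w$: since $u\cdot v=-1$ this yields $\adj(\1-\bar w\otimes\bar w)=\bar w\otimes\bar w$. Consequently
\begin{equation*}
\adj\nabla\wc=\frac{\tau^{2}}{|w|^{2}}\bigl(\adj(\nabla w)\bar w\bigr)\otimes\bar w.
\end{equation*}
Plugging this into the definition of $G$ and using $\wc/|\wc|^{3}=\bar w/\tau^{2}$ together with $\bar w\cdot\bar w=1$, the factors of $\tau^{2}$ cancel and one finds
\begin{equation*}
G(\wc)=\frac{1}{|w|^{2}}\adj(\nabla w)\bar w=\adj\nabla w\,\frac{w}{|w|^{3}}=G(w),
\end{equation*}
which is \eqref{e:mrincredible}. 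Integrating this identity against $\zeta(x)=x/|x|^{2q-1}$ yields $K(\wc)=K(w)$, and integrating $|G(\wc)|^{q}=|G(w)|^{q}$ over $B$ yields $I(\wc)=I(w)$.

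The computation above is essentially routine; the only place that really needs care is the reduction of $\adj(\1-\bar w\otimes\bar w)$, which is why \eqref{jamson} was isolated earlier in the paper. A minor regularity check, namely that $\wc\in W^{1,2q}_{\mathrm{loc}}(V_\tau\setminus\{x_0\})$ and that the formulas above hold a.e., follows from the diffeomorphism hypothesis via Remark \ref{r:smalltau}, so no global Sobolev chain-rule subtleties arise.
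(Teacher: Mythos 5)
Your proof is correct, and it reaches the same endpoint as the paper's by an essentially parallel calculation, but the algebraic route differs in a way worth noting. The paper writes $\nabla\wc = \tfrac{\tau}{|w|}\bigl(\nabla w - \bar w\otimes\nabla w^{T}\bar w\bigr)$, expands $\adj\nabla\wc$ with the bilinear form $\langle\cdot,\cdot\rangle$ of Definition~\ref{defo1} (the rank-one term $\adj(\bar w\otimes\nabla w^{T}\bar w)$ dropping out), and then invokes Lemma~\ref{l:identity} to conclude $\langle\nabla w,\bar w\otimes\nabla w^{T}\bar w\rangle\,w=0$. You instead factor $\nabla\wc = \tfrac{\tau}{|w|}(\1-\bar w\otimes\bar w)\nabla w$ and use multiplicativity of the adjugate together with the identity $\adj(\1-\bar w\otimes\bar w)=\bar w\otimes\bar w$ coming from \eqref{jamson}; this sidesteps Lemma~\ref{l:identity} entirely and is arguably a little more self-contained, at the cost of relying on $\adj(AB)=(\adj B)(\adj A)$ for a singular factor (fine by density of invertibles, but worth a word). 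You are also slightly more careful than the paper in accounting for the a.e.\ nature of the statement, noting that $\partial V_\tau$ is the preimage of the sphere $\partial B(0,\tau)$ under a diffeomorphism and hence Lebesgue-null. Both approaches are short and correct; the paper's has the advantage of reusing Lemma~\ref{l:identity}, which is also needed in Proposition~\ref{salix5}, so the redundancy is deliberate.
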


\begin{proof}We write $\wc$ for $\wc(x,\tau)$ for brevity.  Its gradient $\nabla \wc$ satisfies
\[ \nabla \wc = \frac{\tau}{|w|}\left(\nabla w - \bar{w}\otimes \nabla w^{T} \bar{w}\right) \]
on $V_{\tau}\setminus \{x_{0}\}$.   Hence,
\begin{equation*}\begin{split}G(\wc)& = \adj \nabla \wc \frac{\wc}{|\wc|^{3}} \\ & = \frac{\tau^{2}}{|w|^{2}}\bigg(\adj \nabla w - \langle \nabla w, \bar{w}\otimes \nabla w^{T}\bar{w}\rangle\bigg)\frac{\tau w}{\tau^{3}|w|} 
\\& = \adj \nabla w \frac{w}{|w|^{3}}
\\ & = G(w). \end{split}\end{equation*}
In order to pass from the second to the third line in the above we note that 
\begin{equation}\label{e:zzero} \langle \nabla w, \bar{w}\otimes \nabla w^{T}\bar{w}\rangle w = 0.\end{equation}
To see this using Lemma \ref{l:identity}, let $F:=\nabla w$ and $v=\bar{w}$, 
so that
\[ \langle \nabla w, \bar{w}\otimes \nabla w^{T}\bar{w}\rangle= \adj \nabla w -\adj \nabla w  \ \bar{w}\otimes \bar{w};\]
applying both sides to $\bar{w}$ gives
\[  \langle \nabla w, \bar{w}\otimes \nabla w^{T}\bar{w}\rangle \bar{w} = 0,\]
 which is equivalent to \eqref{e:zzero}.   The last two lines of the lemma are immediate from the relation $G(\wc)=G(w)$.
\end{proof}

Now, we can apply Proposition \ref{salix5} to calculate $\delta K(\wc)[\varphi]$ provided $\wc$ satisfies Condition (I).   This requirement is easily met by strengthening Condition (II) to include those maps $w$ which, in addition, are continuous on $B\setminus\{x_{0}\}$; the definition is as follows: 

\begin{defn}  Condition $(II^{\prime})$:\ \ \label{d:condtwoprime}   We say that $w$ satisfies condition $(II')$ if $w$ is an admissible deformation satisfying 
condition $(II)$ of Definition \ref{defadmiss} and if, in addition, $w$ is continuous in $B \setminus \{x_{0}\}$.   Here, $x_{0}$ refers to the unique point in $B$ satisfying $w(x_{0})=0$.\end{defn}

We remark that $\wc$ is not a deformation because it fails to satisfy $\det \nabla \wc > 0$ a.e. in $B$, as can be seen either by a direct calculation or by noting that $\wc$ maps a small, punctured neighbourhood of $x_{0}$ in $B$ to the surface of a ball in $\mathbb{R}^{3}$, whence $\det \nabla \wc = 0$ near $x_{0}$ by the area formula.  Fortunately, the lack of injectivity of $\wc$ seems not to matter when calculating $\delta K(\wc)[\varphi]$; nor does it matter in the following result, where, without loss, we relax the definition 
 of admissible maps to include those which are not necessarily deformations.

\begin{thm}\label{t:main1} Let $w_{1}$ and $w_{2}$ be admissible maps satisfying either condition $(I^{\prime})$ with a single discontinuity at $x_{0}$ in $B$, or condition $(II^{\prime})$ with zero at $x_{0}$ in $B$.   Suppose that $w_{1}$ and $w_{2}$ are Lipschitz path connected, that is, 
there is a one-parameter family $(\gamma(\cdot\,;t))_{t \in [0,1]}$ of admissible maps with the properties that: 
\begin{itemize}
 \item[(i)]\begin{displaymath} \gamma(\cdot\,;0)=\left\{\begin{array}{ll} \wcone(\cdot\,;\tau_{1}) & \textrm{for some} \ \tau_{1}>0, \ \textrm{if} \  w_{1} \ \textrm{satisfies (II')} \\ w_{1}(\cdot) & \textrm{if} \  w_{1} \ \textrm{satisfies (I')}\end{array}\right.\end{displaymath}
                                                                                                                                                                                                                                      and 
\begin{displaymath} \gamma(\cdot\,;1)=\left\{\begin{array}{ll} \wctwo(\cdot\,;\tau_{2}) & \textrm{for some} \ \tau_{2}>0, \ \textrm{if} \  w_{2} \ \textrm{satisfies (II')} \\ w_{2}(\cdot) & \textrm{if} \  w_{2} \ \textrm{satisfies (I')}\end{array}\right.\end{displaymath}
\item[(ii)] each $\gamma(\cdot\;t)$ satisfies condition $(I^{\prime})$;
\item[(iii)] each $\dot{\gamma}(\cdot\,;t)$ is Lipschitz and of compact support in $B$.
\end{itemize}

\noindent Then 
\begin{equation}\label{e:pc1} K(w_{1})=K(w_{2}).\end{equation}

\noindent In particular, if $w$ is an admissible map satisfying condition $(I')$ or $(II')$ at the origin and is Lipschitz path connected to $\mathbf{i}$ then  
\begin{equation}\label{e:pc2}  I(w) \geq I(\bf{i}).\end{equation}
\end{thm}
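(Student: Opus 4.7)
The plan is to reduce everything to showing that $t \mapsto K(\gamma(\cdot\,;t))$ is constant on the given one-parameter family, using Proposition \ref{salix5} pointwise in $t$, and then to invoke \eqref{morus} to extract the inequality for $I$.

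First I would handle the endpoint identifications. By Lemma \ref{l:circapprox}, if $w_i$ satisfies $(II')$ then $K(\wc_i(\cdot\,;\tau_i)) = K(w_i)$; if instead $w_i$ satisfies $(I')$ then $\gamma(\cdot\,;t)$ already agrees with $w_i$ at the corresponding endpoint. Thus in either case $K(w_i) = K(\gamma(\cdot\,;s_i))$ for $s_1 = 0$, $s_2 = 1$, and it suffices to prove
\[K(\gamma(\cdot\,;1)) - K(\gamma(\cdot\,;0)) = 0.\]

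The next step is to show that
\[\frac{d}{dt} K(\gamma(\cdot\,;t)) = \delta K(\gamma(\cdot\,;t))[\dot\gamma(\cdot\,;t)]\]
for a.e.\ $t \in [0,1]$, after which integrating from $0$ to $1$ and applying Proposition \ref{salix5} will give the conclusion \eqref{e:pc1}. To justify differentiation under the integral sign, I would use hypothesis (iii): since each $\dot\gamma(\cdot\,;t)$ is Lipschitz with compact support in $B$, and since each $\gamma(\cdot\,;t)$ satisfies $(I')$ with $|\gamma(\cdot\,;t)| \geq \tau_1(t) > 0$ on a neighbourhood of its discontinuity, the integrand of $K(\gamma(\cdot\,;t))$ is uniformly dominated on $t$-intervals by a fixed $L^1$ function (the bound $|\zeta||\adj\nabla\gamma|\,|\gamma|^{-2}$ is controlled just as in Proposition \ref{salix1}). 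The difference quotient then converges pointwise a.e.\ by the same expansion used in Proposition \ref{salix2}, and dominated convergence gives the claim. Proposition \ref{salix5} applies because $\gamma(\cdot\,;t)$ satisfies $(I')$, and although its statement uses $\varphi \in C_c^1(B,\mathbb{R}^3)$, the Lipschitz test field $\dot\gamma(\cdot\,;t)$ with compact support in $B$ is admissible by a routine density argument (mollify and pass to the limit, using the continuity of each $F_i$ with respect to strong $W^{1,2q}$ convergence established in the proof of Lemma \ref{salix4}).

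Having established \eqref{e:pc1}, the second assertion is immediate. Take $w_2 = \mathbf{i}$, which satisfies $(I')$ with $x_0 = 0$ (in fact it is smooth away from the origin with $|\mathbf{i}(x)| = |x|$ bounded below on punctured balls around $0$), and apply the first part to conclude $K(w) = K(\mathbf{i})$. Then \eqref{morus} reads
\[I(w) \geq I(\mathbf{i}) + q(K(w) - K(\mathbf{i})) = I(\mathbf{i}),\]
which is \eqref{e:pc2}.

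The main obstacle I anticipate is the differentiation under the integral sign, in particular finding a time-uniform dominating function near those values of $t$ where the discontinuity $x_0(t)$ of $\gamma(\cdot\,;t)$ moves, or where $\tau_1(t)$ degenerates. The hypothesis that each $\gamma(\cdot\,;t)$ satisfies $(I^{\prime})$ gives a pointwise-in-$t$ lower bound on $|\gamma|$ but in principle this bound might collapse as $t$ varies. The Lipschitz-in-$t$ hypothesis with compactly supported $\dot\gamma$ prevents such collapse on any compact time interval, so the dominating-function bound survives and the argument closes.
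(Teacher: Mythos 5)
Your approach matches the paper's: extend Lemma \ref{salix4} (and hence Proposition \ref{salix5}) to compactly supported Lipschitz test fields, apply this along the path using hypothesis (ii) to conclude $\delta K(\gamma(\cdot\,;t))[\dot\gamma(\cdot\,;t)]=0$, deduce that $t \mapsto K(\gamma(\cdot\,;t))$ is constant, identify the endpoints via Lemma \ref{l:circapprox}, and finish with \eqref{morus}. You make the chain-rule step --- passing from the vanishing first variation to constancy of $K$ along the path, including a dominating function in $t$ --- more explicit than the paper (which just writes ``hence''), and that is a welcome refinement.

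There is one small slip in your final paragraph: the identity map $\mathbf{i}$ does \emph{not} satisfy condition $(I')$ at the origin. Condition $(I')$ requires $|\mathbf{i}(x)|=|x|$ to be bounded below by $\tau_1>0$ on a punctured ball $B'(0,\delta)$, but $|x|$ approaches $0$ there, so no such $\tau_1$ exists. Rather, $\mathbf{i}$ satisfies $(II')$: it vanishes exactly at $0$ and is a diffeomorphism in a neighbourhood of $0$. Consequently the relevant path endpoint in hypothesis (i) is the circle map $\wid(\cdot\,;\tau_{2})$ for some $\tau_2>0$, not $\mathbf{i}$ itself, and one should invoke Lemma \ref{l:circapprox} to get $K(\wid(\cdot\,;\tau_{2}))=K(\mathbf{i})$ --- exactly the mechanism you already use for $w_1$ and $w_2$ in the first part of your argument. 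This misclassification does not affect the validity of the overall argument, since the theorem's hypotheses cover both cases and your endpoint identification handles them symmetrically.
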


\begin{proof}  First note that, by a simple approximation argument, the result of Lemma \ref{salix4} continues to hold true for compactly supported Lipschitz $\varphi$.    Hence, by Proposition \ref{salix5}, we find that $\delta K(w)[\varphi]=0$ for the same class of $\varphi$.   From this and hypothesis (ii) above we deduce that $\delta K(\gamma(\cdot;t))[\dot{\gamma}(\cdot;t)]=0$ for $0<t<1$, and hence that $K(\gamma(t))$ is constant on $(0,1)$.  The assumptions in hypothesis (i) together with a simple continuity argument imply that $K(\gamma(\cdot;t)) \to K(\wcone(\cdot;\tau_{1}))$ as $t \to 0+$ and $K(\gamma(\cdot;t)) \to K(\wctwo(\cdot;\tau_{2}))$ as $t \to 1-$.  Hence $K(\wcone(\cdot;\tau_{1}))=K(\wctwo(\cdot;\tau_{2}))$.  Lastly, by Lemma \ref{l:circapprox}, $K(\wcone(\cdot;\tau_{1}))=K(w_{1})$ and $K(\wctwo(\cdot;\tau_{2}))=K(w_{2})$, so that $K(w_{1})=K(w_{2})$.

Inequality \eqref{e:pc2} is \eqref{stabnec} in the case that $w$ is Lipschitz path connected to the identity $\mathbf{i}$.   To prove it, recall from \eqref{morus} that 
\[ I(w) \geq I(\id) + q (K(w)-K(\id)).\]
According to \eqref{e:pc1}, the term with prefactor $q$ vanishes.   Hence $I(w) \geq I(\mathbf{i})$.
\end{proof}

\subsection{Path-connecting maps to the identity}\label{pctoid}

%%%%%%%%%%%%  Following is updated   %%%%%%%%%%%%%%%%
In this section we give concrete examples of maps to which ideas similar to those appearing in Theorem \ref{t:main1} can be applied.    
Let $f: B \to B$ be a diffeomorphism with $f(0)=0$ and such that  $f(x)=x$ if $|x|=1$.   Fix $\rho \in (0,1)$ and define the map $w$ by 
\begin{equation}\label{earedwill}w(x) = \begin{cases}
f\left(\frac{\rho x}{|x|}\right) & \textrm{if} \ 0 < |x| \leq \rho\\
f(x) & \textrm{if} \ \rho \leq |x| \leq  1. \end{cases}
\end{equation}
It is straightforward to check that $w$ satisfies Condition (I').   Rather than trying to find a path connecting $w$ directly to $\mathbf{i}$, to which one might apply Theorem \ref{t:main1} for example, it is better to exploit the invariance of $K$ with respect to the `circle map' introduced in Definition \ref{d:wcirc}.  Recall that in terms of the parameter $\tau > 0$ the map $\wc$ is given  by
\begin{eqnarray*}\wc(x) = \left\{\begin{array}{ll}
\tau \frac{w(x)}{|w(x)|} & \textrm{if} \ 0 < |w(x)| \leq \tau\\
w(x) & \textrm{if} \ \tau \leq |w(x)| \leq  1. \end{array}\right.
\end{eqnarray*}
Thus we shall connect (by a suitably regular path) the functions 
$\wc$ and $\wid$ for fixed $\tau$.  To do so we define for each  $t \in  [0,1]$ the map $\gamma(\cdot; t): B \to B$ as follows:
\begin{equation}\label{greywill}\gamma(x,t) = \begin{cases} \tau \frac{w\left(\frac{x}{|x|^{t}}\right)}{\left|w\left(\frac{x}{|x|^{t}}\right)\right|} & \textrm{if} \ \  |x|^{t}\left|w\left(\frac{x}{|x|^{t}}\right)\right| \leq \tau \\
|x|^{t}w\left(\frac{x}{|x|^{t}}\right) & \textrm{if} \ \  |x|^{t}\left|w\left(\frac{x}{|x|^{t}}\right)\right| \geq \tau.
\end{cases}
\end{equation}
That is, $\gamma$ is the circle map of the function $x \mapsto |x|^{t}w(x |x|^{-t})$, and it satisfies 
\begin{displaymath} \left.\begin{array}{l}
\gamma(x;0)=\wc(x) \\
\gamma(x;1)=\wid(x) \end{array}\right\} 
\quad \forall x \in B\setminus\{0\}
\end{displaymath} 
Moreover, the boundary condition $w(x,t)=x$ for all $x \in  \partial B$ is obeyed for all $t$ in  $[0,1]$.  The next proposition records some further properties of $\gamma$, $w$ and  $\dot{\gamma}:=\partial_{t}\gamma$.  

\begin{prop}\label{trove}  Let $\gamma(\cdot;t): B \to B$ be defined by \eqref{greywill} and let
\[ U(\tau;t)= \left\{ x \in B: \ \ |x|^{t}\left|w\left(\frac{x}{|x|^{t}}\right)\right| < \tau \right\}.\]
Let $\alpha(x;t) = x |x|^{-t}$ for nonzero $x$ in $B$, $0 \leq t \leq 1$, and recall the notation $\bar{x}=\frac{x}{|x|}$ for nonzero $x$ in  $\mathbb{R}^{3}$.  Then:
\begin{displaymath} \nabla w(x) = \left\{\begin{array}{ll} \frac{\rho}{|x|} \nabla f(\rho \bar{x}) (\1 - \bar{x} \otimes \bar{x}) & \textrm{if} \ x \in B(0,\rho), \ x \neq 0 \\
\nabla f(x) & \textrm{if} \ x \in B \setminus B(0,\rho), 
\end{array}\right.\end{displaymath}

\begin{displaymath}
\dot{\gamma}(x;t)= \left\{
\begin{array}{ll}
\tau \ln |x| \left(\bar{w(\alpha)} \otimes \bar{w(\alpha)} - \1\right)\frac{\nabla w(\alpha) \alpha}{|w(\alpha)|} & \textrm{if} \ x \in U(\tau;t) \\
|x|^{t} \ln |x| \left(w(\alpha) - \nabla w(\alpha) \alpha\right) & \textrm{if} \ x \in  B \setminus U(\tau;t),
\end{array}\right.
\end{displaymath}
and
\begin{displaymath}
\nabla \gamma (x;t) = \left\{\begin{array}{ll} \frac{\tau}{w(\alpha)} \left(\1 - \bar{w(\alpha)} \otimes \bar{w(\alpha)}\right)\nabla w (\alpha) \nabla \alpha & \textrm{if} \ \in U(\tau;t)\\
t|x|^{t-1}w(\alpha) \otimes \bar{x} + \nabla w (\alpha) \left(1-t \bar{x} \otimes \bar{x}\right)
  & \textrm{if} \ x \in  B \setminus U(\tau;t)  \end{array}\right.
  \end{displaymath}
hold.  In particular, there is a constant $C >0$ depending only on  $\tau$ and $\rho$ such that
\begin{equation}\label{ram1} |\dot{\gamma}(x,t)| \leq C \ln |x| \quad \quad \forall x \in B, \ x \neq 0, \ t \in [0,1] \end{equation}
and
%\begin{displaymath}
%|\nabla \gamma(x;t)| \leq \left\{\begin{array}{ll}\label{ram2} \frac{C}{|x|} & \textrm{if} \ x \in  U(\tau;t), x \neq 0 \\
%\frac{C}{|x|^{1-t}}  & \textrm{if} \ x \notin  U(\tau;t). \end{array}\right.
%\end{displaymath}
\begin{equation}\label{ram2}
|\nabla \gamma(x;t)| \leq  \begin{cases}\frac{C}{|x|} & \textrm{if} \ x \in  U(\tau;t), x \neq 0 \\
\frac{C}{|x|^{1-t}}  & \textrm{if} \ x \notin  U(\tau;t).
\end{cases}
\end{equation}

\end{prop}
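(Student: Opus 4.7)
\medskip
\noindent\textbf{Proof proposal for Proposition \ref{trove}.}
The proof is by direct computation, proceeding in three stages. First I would compute $\nabla w$ by applying the chain rule to the definition \eqref{earedwill}. For $0<|x|<\rho$, writing $y(x)=\rho\bar{x}$ and differentiating gives $\partial_j y_i = (\rho/|x|)(\delta_{ij}-\bar{x}_i\bar{x}_j)$, so $\nabla y = (\rho/|x|)(\1-\bar{x}\otimes\bar{x})$; composition with $f$ yields the first branch. For $|x|>\rho$ the identity is immediate. Note in passing the key algebraic fact that will be used repeatedly in the bounds: $\nabla w(\alpha)\alpha=0$ whenever $\alpha\in B(0,\rho)$, because $(\1-\bar\alpha\otimes\bar\alpha)\alpha=0$.

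Next I set $\alpha(x;t)=x|x|^{-t}$ and record the two auxiliary derivatives $\partial_t\alpha=-\ln|x|\,\alpha$ and $\nabla\alpha=|x|^{-t}(\1-t\,\bar{x}\otimes\bar{x})$, both obtained by elementary differentiation. To compute $\dot\gamma$ and $\nabla\gamma$ I split into the two regions $U(\tau;t)$ and its complement. In $U(\tau;t)$, $\gamma=\tau\,\bar{w}(\alpha)$; using the identity $\nabla\bar{w}=|w|^{-1}(\1-\bar{w}\otimes\bar{w})\nabla w$ (i.e.\ the derivative of a unit-vector-valued map), together with the chain rule and the formula for $\partial_t\alpha$, I get the stated expressions for both $\dot\gamma$ and $\nabla\gamma$. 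In the complement, $\gamma=|x|^t w(\alpha)$; Leibniz plus the formulae above produce the remaining two lines. One should remark that on the interface $\{|x|^t|w(\alpha)|=\tau\}$ the two formulae agree, so no boundary contribution appears.

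For the bounds \eqref{ram1}--\eqref{ram2} I would use that $f$ is a diffeomorphism with $f(0)=0$, which implies $|w|\geq c>0$ on $B$ (since $w(x)$ always hits the sphere of radius $\rho$ or larger under $f$ from away from $0$). In $U(\tau;t)$ the factor $(\1-\bar{w}\otimes\bar{w})$ has bounded norm, and the ratio $|\nabla w(\alpha)\alpha|/|w(\alpha)|$ is bounded \emph{independently of $|\alpha|$}: indeed, if $\alpha\in B(0,\rho)$ it vanishes by the cancellation observed above, while for $\alpha\notin B(0,\rho)$ we have $\nabla w(\alpha)=\nabla f(\alpha)$ which is bounded; combining gives \eqref{ram1} in this region. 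In $B\setminus U(\tau;t)$ one bounds $|w(\alpha)|$ and $|\nabla w(\alpha)\alpha|$ separately by the same dichotomy and uses $|x|^t\leq 1$. For \eqref{ram2}, in $U(\tau;t)$ one estimates $|\nabla w(\alpha)||\nabla\alpha|\leq C|x|^{-(1-t)}|x|^{-t}=C/|x|$ using $|\nabla w(\alpha)|\leq C/|\alpha|=C|x|^{t-1}$ when $\alpha\in B(0,\rho)$ (and a bounded value otherwise, which is dominated by $1/|x|$ since $|x|\leq 1$); outside $U(\tau;t)$ the term $t|x|^{t-1}w(\alpha)\otimes\bar{x}$ contributes $C|x|^{t-1}$, and the second term is bounded by $C/|x|^{1-t}$ by the same case split on $\alpha$.

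The only subtlety I anticipate is the case analysis based on whether $\alpha=x|x|^{-t}$ falls inside $B(0,\rho)$ or not, since $|\alpha|=|x|^{1-t}$ does not depend monotonically on $|x|$ as $t$ varies; but the cancellation $\nabla w(\alpha)\alpha=0$ inside $B(0,\rho)$ absorbs the potential singularity of $\nabla w$ there, so no further argument is needed. Everything else is routine chain-rule bookkeeping.
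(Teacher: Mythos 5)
Your derivation of $\nabla w$, $\dot\gamma$ and $\nabla\gamma$ by the chain rule, and the bounds \eqref{ram1}--\eqref{ram2} via $|\nabla w(\alpha)|\leq C|\alpha|^{-1}$ (with the case split at $|\alpha|=\rho$), $|\nabla\alpha|\leq C|x|^{-t}$ and $|w(\alpha)|\geq c>0$, are correct and are precisely the paper's (very terse) argument; your cancellation $\nabla w(\alpha)\alpha=0$ for $\alpha\in B(0,\rho)$ is equivalent to the paper's unified estimate $|\nabla w(\alpha)\alpha|\leq C$.

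The one error is your remark that the two branches for $\dot\gamma$ (and implicitly for $\nabla\gamma$) match on the interface $\{|x|^t|w(\alpha)|=\tau\}$: they do not. Substituting $|x|^t=\tau/|w(\alpha)|$ into the outer branch gives $\tau\ln|x|\bigl(\bar{w}(\alpha)-\tfrac{\nabla w(\alpha)\alpha}{|w(\alpha)|}\bigr)$, whereas the inner branch gives $\tau\ln|x|\bigl(\tfrac{(\bar{w}(\alpha)\cdot\nabla w(\alpha)\alpha)}{|w(\alpha)|}\,\bar{w}(\alpha)-\tfrac{\nabla w(\alpha)\alpha}{|w(\alpha)|}\bigr)$; these coincide only when $\bar{w}(\alpha)\cdot\nabla w(\alpha)\alpha=|w(\alpha)|$, which fails generically and in particular fails whenever $\alpha\in B(0,\rho)$, where $\nabla w(\alpha)\alpha=0$ by your own cancellation. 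This mismatch is immaterial to the proposition, since the derivative formulas are asserted on the open regions and the bounds are a.e. statements; what is actually needed---continuity of $\gamma$ itself across the interface, so that the piecewise gradient is the weak gradient---does hold, both branches reducing to $\tau\bar{w}(\alpha)$ there.
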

\begin{proof}The formulae for the derivatives follow easily from a direct calculation whose details we omit.  The estimates \eqref{ram1} and \eqref{ram2} follow from the expressions above and by noting that 
$|\nabla w(\alpha)| \leq C|\alpha|^{-1}$ and
$|\nabla \alpha| \leq C|x|^{-t}$ for constants $C$ depending only on $\tau$ and $\rho$.  The fact that $|w(\alpha)| \geq c$ for some fixed constant $c$ is also used extensively. 
\end{proof} 
 
We aim to show that $K(\gamma(\cdot;t))$ is constant as a function of $t$.   Then the fact that $\gamma(\cdot;0) = \wc$ and $\gamma(\wid)$, together with Lemma \ref{l:circapprox}, implies that $K(w) = K(\id)$.  Finally, the idea of Theorem \ref{t:main1},  and particularly inequality \eqref{morus},  can be applied to  deduce that $I(w) \geq I(\id)$.

\begin{lem}\label{ram3} With $w$ and $\gamma$ as defined in \eqref{earedwill} and \eqref{greywill} respectively, $K(\gamma(\cdot;t))$ is constant as a function of $t$.  In particular, $I(w) \geq I(\id)$.
\end{lem}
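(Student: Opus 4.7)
The plan is to follow the strategy set out in the discussion preceding the lemma and in Theorem \ref{t:main1}: show that $K(\gamma(\cdot;t))$ is independent of $t \in [0,1]$, then use the endpoint identifications $\gamma(\cdot;0)=\wc$ and $\gamma(\cdot;1)=\wid$ together with Lemma \ref{l:circapprox} to conclude $K(w)=K(\id)$. Inequality \eqref{morus} then gives $I(w) \geq I(\id)$ immediately.

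First I would verify that each $\gamma(\cdot;t)$ satisfies condition $(I')$ with its single discontinuity at $x_0=0$. The formula \eqref{greywill} makes $\gamma$ continuous on $B\setminus\{0\}$, while for $|x|$ sufficiently small one has $x\in U(\tau;t)$ and hence $|\gamma(x;t)|=\tau$; so $\gamma(B'(0,\delta))\subset B(0,2\tau)\setminus B(0,\tau/2)$ for small $\delta$, which is $(I')$. In particular Proposition \ref{salix5}, combined with the approximation argument in the proof of Theorem \ref{t:main1} that extends it to compactly supported Lipschitz test functions, gives $\delta K(\gamma(\cdot;t))[\varphi]=0$ for every such $\varphi$.

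The main task is to justify taking $\varphi=\dot\gamma(\cdot;t)$. Since $\dot\gamma$ is not compactly supported in $B$ and may jump across $\partial U(\tau;t)$, I would introduce a cutoff $\eta_\epsilon\in C_c^\infty(B\setminus\{0\})$ with $\eta_\epsilon\equiv 1$ on $B(0,1-2\epsilon)\setminus B(0,2\epsilon)$ and $|\nabla\eta_\epsilon|\leq C/\epsilon$, and, if necessary, mollify transverse to $\partial U(\tau;t)$ to obtain a Lipschitz test function $\varphi_\epsilon$. Applying Proposition \ref{salix5} to $\varphi_\epsilon$ gives $\delta K(\gamma(\cdot;t))[\varphi_\epsilon]=0$. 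Passing $\epsilon\to 0$ in the representation of Proposition \ref{salix2} is controlled as follows. Near the origin, the key observation is that $\nabla w(\alpha)\alpha=0$ whenever $|\alpha|<\rho$ (from the formula for $\nabla w$ in Proposition \ref{trove}), so $\dot\gamma(\cdot;t)\equiv 0$ on $B(0,\rho^{1/(1-t)})$ and the inner annular cutoff contributes nothing. Near $\partial B$, the bound $|\dot\gamma|\leq C|\ln|x||=O(1-|x|)$ cancels the singular factor in $\nabla\eta_\epsilon$, yielding an $O(\epsilon)$ contribution. For the bulk terms (those with $\eta_\epsilon$ but not $\nabla\eta_\epsilon$), the estimates $|\zeta|\lesssim|x|^{2-2q}$, $|\nabla\gamma|\lesssim|x|^{-1}$, $|\gamma|\geq\tau$, and boundedness of $\dot\gamma$ away from the origin give an integrable dominator of order $|x|^{-2q}$, integrable on $B$ since $2q<3$. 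Dominated convergence then yields $\delta K(\gamma(\cdot;t))[\dot\gamma(\cdot;t)]=0$ for $t\in(0,1)$.

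Hence $K(\gamma(\cdot;t))$ is constant on $(0,1)$. By continuity at the endpoints (the integrand $\zeta\cdot G(\gamma(\cdot;t))$ is dominated uniformly in $t$ by the same $L^1$ function), $K(\wc)=K(\wid)$. Lemma \ref{l:circapprox} then gives $K(w)=K(\id)$, and \eqref{morus} gives the claimed $I(w)\geq I(\id)$. The main obstacle I expect is the last step of the cutoff argument: $\dot\gamma$ is only piecewise Lipschitz because of the possible jump across $\partial U(\tau;t)$, so turning $\eta_\epsilon\dot\gamma$ into an admissible (Lipschitz, compactly supported) test function to which the extended Proposition \ref{salix5} applies requires a small transverse smoothing whose error must also be controlled in the same $L^1$ norm.
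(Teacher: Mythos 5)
Your strategy—reduce to the already-established variational identity $\delta K(\gamma)[\varphi]=0$ for compactly supported Lipschitz $\varphi$ (Proposition \ref{salix5} as extended in Theorem \ref{t:main1}) via cutoffs and smoothing—is a genuinely different route from the one taken in the paper. The paper instead directly re-runs the arguments of Proposition \ref{salix2} and Step 2 of Lemma \ref{salix4} with $(\gamma,\dot\gamma)$ in place of $(w,\varphi)$, using the explicit pointwise bounds \eqref{ram1}--\eqref{ram2} both to justify the dominated-convergence passage in the $A_1,A_2$ estimates and to show the inner boundary term obeys $|T_1|\leq C\eps^{3-2q}|\ln\eps|\to 0$ as $\eps\to 0$. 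That is, rather than approximating $\dot\gamma$ by better test functions, it simply checks by hand that each step of the earlier arguments goes through for this particular pair. Your method is in principle more modular, but it leaves more work in controlling the approximation errors.

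The genuine gap is exactly the one you flag at the end, and it is not a small one. Writing $v(x;t)=|x|^{t}w(\alpha)$ with $\alpha=x|x|^{-t}$, a direct computation gives that the jump of $\dot\gamma$ across $\partial U(\tau;t)=\{|v|=\tau\}$ equals $\bar{v}\,(\bar{v}\cdot\partial_t v)=\bar{v}\,\partial_t|v|$, which on the part of the interface where $|\alpha|<\rho$ is $|x|^{t}\ln|x|\,w(\alpha)\neq 0$: an $O(1)$ jump on a hypersurface. Transverse mollification of $\dot\gamma$ on a scale $\delta$ therefore produces $|\nabla\varphi_\delta|\sim 1/\delta$ on a layer of measure $O(\delta)$, and since $\zeta$, $\nabla\gamma$ and $\gamma/|\gamma|^3$ are all bounded away from the origin, the term $\int_B\zeta\cdot\langle\nabla\gamma,\nabla\varphi_\delta\rangle\gamma/|\gamma|^3\,dx$ picks up an $O(1)$ surface contribution that does not vanish as $\delta\to 0$. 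So the limit you would obtain is $\delta K(\gamma)[\dot\gamma]$ \emph{plus} an interface term, not $\delta K(\gamma)[\dot\gamma]$ by itself—the smoothing error cannot be made small in the norm you need. A smaller, but worth-flagging, imprecision: the claim $\dot\gamma\equiv 0$ on $B(0,\rho^{1/(1-t)})$ needs, in addition, that $B(0,\rho^{1/(1-t)})\subset U(\tau;t)$; on the part where $x\notin U(\tau;t)$ and $|\alpha|<\rho$, one has $\dot\gamma=|x|^t\ln|x|\,w(\alpha)\neq 0$. What \emph{is} true, and suffices for the inner cutoff, is that $\dot\gamma\equiv 0$ on $B(0,r_0(t))$ for some $r_0(t)>0$, since $B(0,r)\subset U(\tau;t)$ for small enough $r$.
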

\begin{proof} We calculate $\delta K(\gamma)[\dot{\gamma}]$ by first noting that Proposition \ref{salix2} continues to hold with $\gamma$ and $\dot{\gamma}$ in place of $w$ and $\varphi$.   The only difference is that the estimate of integral $A_{2}$ defined in \eqref{bark} requires a more careful application of the dominated convergence theorem.    Specifically, 
\[\lim_{\eps \to 0} \int_{B} \zeta(x) \cdot \adj \nabla \gamma \left(\frac{1}{\eps}\left(\frac{\gamma+\eps \dot{\gamma}}{|\gamma + \eps \dot{\gamma}|} - \frac{\gamma}{|\gamma|^{3}}\right) - (1-3 \bar{\gamma} \otimes \bar{\gamma})\dot{\gamma}\right)\,dx =  0\]
continues to hold because the integrand is dominated by $C |x|^{2-2q}|\nabla \gamma|^{2}|\dot{\gamma}|$, which  by  \eqref{ram1} and  \eqref{ram2} is of order $|x|^{2-2q}|\ln |x||$ and is hence integrable on $B$.   Arguing similarly, $A_{1}$ defined in \eqref{snake} converges to zero.  Hence
\begin{equation}
\label{tibetan} \delta K(\gamma)[\dot{\gamma}] = \int_{B}\zeta(x)\cdot \left\{\langle \nabla \gamma, \nabla \dot{\gamma} \rangle \frac{\gamma}{|\gamma|^{3}}+ \frac{\adj \nabla \gamma}{|\gamma|^{3}}\left(\1 - 3 \bar{\gamma} \otimes \bar{\gamma}\right) \dot{\gamma} \right\}\,dx.
\end{equation}
The term involving $\nabla \dot{\gamma}$ in  \eqref{tibetan} can then be handled as in Step 2 of Lemma \ref{salix4}.  We write
\[\int_{B}\zeta(x)\cdot \langle \nabla \gamma, \nabla \dot{\gamma} \rangle \frac{\gamma}{|\gamma|^{3}} \,dx = \lim_{\eps \to 0} \int_{B\setminus B(0,\eps)}\zeta(x)\cdot \langle \nabla \gamma, \nabla \dot{\gamma} \rangle \frac{\gamma}{|\gamma|^{3}} \,dx\]
and then integrate by parts over the domain $B\setminus  B(0,\eps)$.  The result is, in the notation introduced in \eqref{e:bruchner8},
\[  \int_{B\setminus B(0,\eps)}\zeta(x)\cdot \langle \nabla \gamma, \nabla \dot{\gamma} \rangle \frac{\gamma}{|\gamma|^{3}} \,dx = T_{1}+T_{2}+T_{3}+T_{4}.\]  
As before, $T_{2}$ and $T_{3}$ vanish by exploiting symmetries, while $T_{1}$ and $T_{4}$ remain  as
\begin{eqnarray*}
T_{1} &  = & - \int_{\partial B(0,\eps)} \zeta_{i}\eps^{jab}\eps^{icd}\gamma_{a,c}|\gamma|^{-3}\gamma_{j}\dot{\gamma}_{b}\bar{x}_{d} \,d\mathcal{H}^{2} \\
T_{4} & = & \int_{B\setminus B(0,\eps)}  \zeta |\gamma|^{-3} \cdot \left(2 \left(\adj \nabla \gamma\right)  -3 \langle \nabla \gamma, \bar{\gamma}\otimes \nabla \gamma^{T}\bar{\gamma} \rangle \right)\dot{\gamma} \,dx.
\end{eqnarray*}
Estimates \eqref{ram1} and \eqref{ram2} imply that, for sufficiently small $\eps$,
\[ |T_{1}| \leq C \eps^{2-2q}\eps^{-1}|\ln \eps| \eps^{2}\]
for some constant $C$ independent of $\eps$, and that the integrand of $T_{4}$ belongs to $L^{1}(B)$.  Hence on letting $\eps \to 0$ we obtain
\begin{equation}\label{figaro}
\int_{B}\zeta(x)\cdot \langle \nabla \gamma, \nabla \dot{\gamma} \rangle \frac{\gamma}{|\gamma|^{3}} \,dx = 
\int_{B}  \zeta |\gamma|^{-3} \cdot \left(2 \left(\adj \nabla \gamma\right)  -3 \langle \nabla \gamma, \bar{\gamma}\otimes \nabla \gamma^{T}\bar{\gamma} \rangle \right)\dot{\gamma} \,dx.
\end{equation}
Inserting \eqref{figaro} into \eqref{tibetan} and applying the identity \eqref{l:identity} finally gives $\delta K(\gamma) [\dot{\gamma}] =  0$.   The proof can be concluded as previously indicated.
\end{proof} 
%%%%%%%%%%%%%%%%%%%%%%%%%%%%%%%%%%%%%%%%%%%%%%%%%%%%%%%%5

%%  NEW SECTION %%%%

%%%%%%%%%%%%%%%%%%%%%%%%%%%%%%%%%%%%%%%%%%%%%%%%%%%%%%%%%
\section{Stability when $w$ vanishes somewhere}\label{S:vanish}

In this section we study the functional $K(w)$ defined in \eqref{K} in the case that $w$ satisfies condition (II').   In such circumstances the calculation of $\delta K (w)[\varphi]$ given in Proposition \ref{salix2} breaks down; indeed, its integrand is not in general a function in $L^{1}(B)$:  see Example \ref{ex:notdiff} below.   This is hardly surprising when we recall how heavily the assumption $|w| \geq c > 0$ featured in the derivation of $\delta K(w)[\varphi]$ when condition (I') applied.   When $w$ vanishes somewhere it is nevertheless possible to make progress using so-called inner variations of $w$ of the form $w^{\eps}(x)=w(x+\eps \varphi(x))$.  The derivative $\partial_{\eps} \arrowvert_{\eps=0}K(w^{\eps})$ is calculated in Lemma \ref{l:evalbt} and applied in Proposition \ref{p:furthersmaller}, where it is shown that if the zero of the  perturbation $w^{\eps}$ of $w$ moves away (or towards) the origin then $K(w^{\eps})$ decreases (increases, respectively).  For this to hold we must insist that 
$x_{0} \neq 0$,  where $x_{0}$ is the unique zero of $w$.    As the zero $x_{0}$ approaches $0$ itself, $K$ exhibits some further unusual behaviour:  see Lemma \ref{p:laster} and the  accompanying discussion for details.

%We infer that when $w$ satisfies condition $(II)$ either $K$ is not differentiable at $w$ at all, or that the formula for $\delta K (w)[\varphi]$ needs to be modified.  Such a modification would have to take account of, among other things, the fact that outer variations of the form $w+\eps\varphi$ with $\varphi(x_{0})\neq0$ shift the zero away from $x_{0}$.  
%This can be done in the case of `inner variations':  see Lemma \ref{l:evalbt}.

The following example demonstrates that $\delta K(w)[\varphi]$ is not in general finite when $w$ satisfies condition (II') or (II).

\begin{example}\emph{\label{ex:notdiff}Fix $x_{0} \in B$ and let $\varphi$ be a smooth test function with compact support in $B$ such that $\varphi(x_{0})=e_{1}$, with $\varphi$ constant on a ball of radius $\delta$ about $x_{0}$ and where $\delta < \frac{1}{2}|x_{0}|$.  Let $w$ satisfy condition $(II)$ in such a way that \[w(x)=F(x-x_{0}) \ \ \textrm{if} \ x \in B(x_{0},\delta)\]
for some fixed and invertible $3 \times 3$ matrix $F$.  The expression for $\delta K (w)[\varphi]$ derived in Proposition \ref{salix2} contains two terms, the first of which is zero on the ball $B(x_{0},\delta)$ because $\varphi$ is constant there.  The second term is of the form
\[ \zeta \cdot \frac{\adj \nabla w}{|w|^{3}} (\1 - 3 \bar{w}\otimes\bar{w})e_{1} = \zeta \cdot  \left(\frac{\adj F}{|F(x-x_{0})|^{3}}(\1 - 3 \overline{F(x-x_{0})}\otimes\overline{F(x-x_{0})})\right)e_{1}.\]
Change variables to $y:=F(x-x_{0})$ and consider the integral 
\[\int_{B(0,\delta')} \zeta(F^{-1}y + x_{0})\cdot \frac{F^{-1}}{|y|^{3}}(e_{1}-3\bar{y}_{1}\bar{y})\,dy.\]
By taking $\delta'$ sufficiently small and applying the contintuity of $\zeta$ at $x_{0} \neq 0$, it is clear that the dominant term is given by
\[\zeta(x_{0})\cdot F^{-1}\int_{B(0,\delta')} \frac{e_{1}-3\bar{y}_{1}\bar{y}}{|y|^{3}}\,dy.\]
The function $(e_{1}-3\bar{y}_{1}\bar{y})|y|^{-3}$ is not Lebesgue integrable in any neighbourhood of $y=0$.   To see this, note that in terms of spherical polar coordinates its angular behaviour $e_{1}-3\bar{y}_{1}\bar{y}$ is separate from the radial behaviour $|y|^{-3}$.  Thus if it were integrable then Fubini's theorem would imply
\begin{equation*}\begin{split}\int_{0}^{\delta'}|y|^{-3}\bigg(\int_{\mathbb{S}^{2}} (e_{1}-3\bar{y}_{1}\bar{y})  \,& \sin \theta \,d\theta \,d \phi \bigg)\,|y|^{2}d|y| = \\ & \int_{\mathbb{S}^{2}}\left(\int_{0}^{\delta'}|y|^{-3}|y|^{2}\,d|y|\right)(e_{1}-3\bar{y}_{1}\bar{y}) \, \sin \theta \,d\theta \,d \phi. \end{split}\end{equation*}
A contradiction is reached by noting that the bracketed inner integral on the left is zero, while that on the right is clearly infinite.   
}\end{example}

We now begin the calculation of $\partial_{\eps} \arrowvert_{\eps=0}K(w^{\eps})$, giving the proof for maps satisfying either condition (I') or (II').

\begin{prop}\label{p:innervars}Let $w$ be an admissible deformation satisfying condition (I') in $B$ with discontinuity at $x_{0} \neq 0$, or condition (II') with $w(x_{0})=0$.  Let $\varphi$ be a smooth test function with support in $B$ and define for each $\eps$ the inner variation $w^{\eps}$ of $w$ by 
\[w^{\eps}(x)=w(x+\eps \varphi(x)) \ \ \ \ \ \textrm{if} \ x \in B.\]
Suppose further that $\varphi$ is zero in a ball $B(0,\delta)$ for some $\delta$ such that $0 < \delta < |x_{0}|/2$.
Then 
\begin{equation}\label{e:EME}\begin{split} \partial_{\eps}\arrowvert_{\eps=0}K(w^{\eps})& = \int_{B} \left((2q-1)\frac{(x \cdot \varphi) x}{|x|^{2q+1}}-\frac{\varphi}{|x|^{2q-1}}\right)\cdot G(w) \,dx \\
  & - \int_{B}\zeta(x) \cdot (\nabla \varphi) \, G(w) \,dx.\end{split}\end{equation}
\end{prop}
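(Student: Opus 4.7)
The plan is to exploit the composition structure $w^{\eps} = w \circ \Phi_{\eps}$, where $\Phi_{\eps}(x) := x + \eps \varphi(x)$, to rewrite $K(w^{\eps})$ in a form in which the $\eps$-dependence is concentrated outside $G(w)$; then differentiate at $\eps = 0$ by elementary means. Specifically, since $\adj(AB) = \adj B \cdot \adj A$ and $w^{\eps}(x) = w(\Phi_{\eps}(x))$, we have
\[G(w^{\eps})(x) = \adj \nabla \Phi_{\eps}(x)\, G(w)(\Phi_{\eps}(x)).\]
For $\eps$ small enough, $\Phi_{\eps}: B \to B$ is a diffeomorphism fixing $\partial B$, so we may change variables $y = \Phi_{\eps}(x)$ in the integral $K(w^{\eps}) = \int_B \zeta(x) \cdot G(w^{\eps})(x)\,dx$. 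Using $(\adj F)/(\det F) = F^{-1}$, the result is
\[K(w^{\eps}) = \int_B \zeta(\Phi_{\eps}^{-1}(y)) \cdot \bigl(\nabla \Phi_{\eps}(\Phi_{\eps}^{-1}(y))\bigr)^{-1} G(w)(y)\,dy.\]
Now $G(w)(y)$ is independent of $\eps$, so the problem reduces to differentiating the factor $\zeta(\Phi_{\eps}^{-1}(y)) \cdot M(y,\eps)^{-1}$, where $M(y,\eps) := \nabla \Phi_{\eps}(\Phi_{\eps}^{-1}(y))$.

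Differentiating the identity $\Phi_{\eps}(\Phi_{\eps}^{-1}(y)) = y$ at $\eps = 0$ yields $\partial_{\eps}|_{\eps=0} \Phi_{\eps}^{-1}(y) = -\varphi(y)$, hence $\partial_{\eps}|_{\eps=0} \zeta(\Phi_{\eps}^{-1}(y)) = -\nabla \zeta(y) \varphi(y)$. Similarly, since $M(y,\eps) = \1 + \eps \nabla\varphi(\Phi_{\eps}^{-1}(y))$, we get $\partial_{\eps}|_{\eps=0} M^{-1}(y,\eps) = -\nabla \varphi(y)$. A direct calculation of $\nabla \zeta$ from $\zeta_i(y) = y_i |y|^{-(2q-1)}$ gives
\[\partial_j \zeta_i = \frac{\delta_{ij}}{|y|^{2q-1}} - (2q-1) \frac{y_i y_j}{|y|^{2q+1}},\]
so that $-\nabla \zeta(y)\varphi(y) \cdot G(w) = -\frac{\varphi}{|y|^{2q-1}}\cdot G(w) + (2q-1)\frac{(y\cdot \varphi)y}{|y|^{2q+1}}\cdot G(w)$. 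Summing the two contributions reproduces exactly the right-hand side of \eqref{e:EME}.

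The main technical obstacle is justifying differentiation under the integral sign, i.e.\ producing an $L^{1}$ dominant uniformly in $\eps$. The support assumption $\supp \varphi \cap B(0,\delta) = \emptyset$ is crucial here: it guarantees that $\Phi_{\eps}$ is the identity on $B(0,\delta)$, so $\zeta(\Phi_{\eps}^{-1}(y)) = \zeta(y)$ on a neighbourhood of the origin and the $|y|^{2-2q}$ singularity of $\zeta$ is unaffected by the variation. Away from the origin, $(y,\eps) \mapsto \zeta(\Phi_{\eps}^{-1}(y))$ and its $\eps$-derivative are smooth and bounded. In case (I$'$), $|w| \geq \tau_1$ forces $G(w)$ to be controlled by $|\nabla w|^2$, hence lies in $L^q(B)$ and pairs with $\zeta \in L^{q'}(B)$ via H\"older as in Proposition \ref{salix1}; in case (II$'$), the diffeomorphism structure of $w$ near $x_0$ gives $|G(w)(y)| \leq C|y-x_0|^{-2}$, which is integrable in a neighbourhood of $x_0 \neq 0$ while $\zeta$ is bounded there. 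These bounds survive the change of variables for small $\eps$ because $\Phi_{\eps}^{-1}$ distorts distances by a factor of $1 + O(\eps)$, so dominated convergence applies and the interchange of $\partial_{\eps}$ with the integral is legitimate, completing the proof.
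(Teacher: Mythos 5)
Your proposal is correct and follows essentially the same route as the paper's proof: change variables $y=\Phi_\eps(x)$ to concentrate the $\eps$-dependence outside $G(w)$, note that the support condition on $\varphi$ keeps the $\zeta$-singularity at the origin fixed, and then differentiate under the integral via dominated convergence, with a final product-rule evaluation matching $-\nabla\zeta\,\varphi\cdot G(w) - \zeta\cdot(\nabla\varphi)\,G(w)$. The only stylistic difference is that you collapse $\adj\nabla\Phi_\eps/\det\nabla\Phi_\eps$ to $(\nabla\Phi_\eps)^{-1}$ at once, whereas the paper keeps $\adj$ and $\det$ separate, expands $\adj(\1+\eps\nabla\varphi)=\1+\eps\langle\1,\nabla\varphi\rangle+\eps^2\adj\nabla\varphi$, and then uses $\langle\1,\nabla\varphi\rangle-(\Div\varphi)\1=-\nabla\varphi$ at the end; this is merely a bookkeeping choice and not a genuinely different argument.
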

\begin{proof} The proof is fairly standard, so details are kept to a minimum. Let $y(x;\eps)=x+\eps \varphi(x)$ and write $x=\psi(y;\eps)$.  Thus $\psi$ is the inverse of the function $x \mapsto y(x;\eps)$.   Change variables in $K(w^{\eps})$, so that
\begin{equation*}\begin{split} K(w^{\eps}) & = \int_{B}\zeta(\psi(y;\eps)) \cdot \left(\1+\eps\langle \1,\nabla \varphi(\psi(y;\eps))\rangle \right)G(w(y))\,\det \nabla_{y}\psi(y;\eps)\,dy
\\ & + \eps^{2}\int_{B}\zeta(\psi(y;\eps)) \cdot \adj \nabla \varphi (\psi(y;\eps))\,G(w(y))\,\det \nabla_{y}\psi(y;\eps)\,dy. \\&=: a(\eps)+\eps^{2}b(\eps)
\end{split}\end{equation*}
For brevity, let $\psi=\psi(y;\eps)$ in the following.  The quotient $(K(w^{\eps})-K(w))/\eps$ can then be written as 
\begin{equation*}\begin{split}
\frac{K(w^{\eps})-K(w)}{\eps} & = \int_{B}\left(\frac{\zeta(\psi)-\zeta(y)}{\eps}\right)\cdot \left(\1+\eps\langle \1, \nabla \varphi(\psi)\rangle\right) G(w(y)) \det \nabla_{y}\psi \,dy \\& + \int_{B}\zeta(y)\cdot\left(\1+\eps \langle \1, \nabla \varphi(\psi)\rangle\right)\,G(w(y))\frac{(\det \nabla_{y}\psi - 1)}{\eps}\,dy
\\ & + \int_{B} \zeta(y) \cdot \langle \1, \nabla \varphi(\psi)\rangle \,G(w(y)) \,dy + \eps b(\eps).
\end{split}\end{equation*}
By inspection, $b(\eps)$ remains bounded as $\eps \to 0$, and so the last term vanishes in the limit $\eps \to 0$.    The identity 
\[ x = \psi(x+\eps \varphi(x);\eps)\]
easily implies
\[ \det \nabla_{y}\psi = \det (1+\eps \nabla \varphi(\psi))^{-1},\]
from which it follows that
\[\frac{\det \nabla_{y} \psi - 1}{\eps} = - \Div \varphi + \eps M(|\nabla \varphi|),\]
where $M(|\nabla \varphi|)$ is some uniformly bounded polynomial in $|\nabla \varphi|$.   To handle the term involving $\zeta(\psi) -\zeta(y)$ first note that $\psi(y)=y$ provided $|y|$ is sufficiently small:  this is easy to prove using the hypothesis that $\varphi$ vanishes in a small neighbourhood of the origin.   The quotient $(\zeta(\psi) -\zeta(y))/\eps$ can then be estimated using the fact that $\zeta$ is Lipschitz away from the origin, as follows:
\begin{equation*}\begin{split} \frac{|\zeta(\psi) -\zeta(y)|}{\eps}  & \leq ||\nabla \zeta||_{L^{\infty}(B \setminus B(0,\delta'))}\frac{|\psi-y|}{\eps}  \\
& \leq C||\nabla \zeta||_{L^{\infty}(B \setminus B(0,\delta'))}\left|\partial_{\eps}\arrowvert_{\eps=0}\psi\right| \\
& \leq  C||\nabla \zeta||_{L^{\infty}(B \setminus B(0,\delta'))}|\varphi|.
\end{split}\end{equation*}
(Here we have used the fact that $\partial_{\eps}\arrowvert_{\eps=0}\psi(y;\eps) = -\varphi(y)$.)

The dominated convergence theorem then applies to the various integrals.  The result is 
\begin{equation*}\begin{split}
\partial_{\eps}\arrowvert_{\eps=0}K(w^{\eps})& = \int_{B} \left((2q-1)\frac{(y \cdot \varphi) y}{|y|^{2q+1}}-\frac{\varphi}{|y|^{2q-1}}\right)\cdot G(w(y)) \,dy \\
  & + \int_{B}\zeta(y)\cdot\left(\langle \1, \nabla \varphi(y)\rangle-\Div \varphi(y) \, \1 \right)\,G(w(y))\,dy,\end{split}\end{equation*}
which, on using the easily verifiable identity 
\[ \langle \1, \nabla \varphi \rangle = (\tr \nabla \varphi) \, \1 - \nabla \varphi, \]
gives \eqref{e:EME}.
\end{proof}

\begin{lem}\label{l:schumann}  Let $w$ be an admissible deformation satisfying condition (I') in $B$, with one discontinuity at $x_{0} \neq 0$ in $B$, or condition (II') with $w(x_{0})=0$.   Let $\varphi$ be a smooth test function satisfying the hypotheses of Proposition \ref{p:innervars}.  Then there is a sequence of functions $\{w^{(j)}\}$ smooth away from $x_{0}$ which approximate $w$ in the $W^{1,2q}$ norm and are such that
\begin{equation}\label{e:preEME}\begin{split}\int_{B} \left((2q-1)\frac{(x \cdot \varphi) x}{|x|^{2q+1}}-\frac{\varphi}{|x|^{2q-1}}\right)\cdot G(w) & \,dx - \int_{B}\zeta(x) \cdot (\nabla \varphi) \, G(w) \,dx  \\ = & \lim_{\eps \to 0}\int_{\partial B(x_{0},\eps)}(\zeta \cdot \varphi)(G(w^{(j(\eps))}) \cdot \nu) \,d\sch^{2}.\end{split}\end{equation}
In particular, $K(w^{\eps})$ is differentiable in $\eps$ at $\eps = 0$ if the limit in equation \eqref{e:preEME} exists.  In this case, 
\[\partial_{\eps}\arrowvert_{\eps = 0} K(w^{\eps}) = \lim_{\eps \to 0}\int_{\partial B(x_{0},\eps)}(\zeta \cdot \varphi)(G(w^{(j(\eps))}) \cdot \nu) \,d\sch^{2}. \]
\end{lem}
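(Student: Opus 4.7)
The plan is to rewrite the LHS of \eqref{e:preEME} as the integral of a divergence and then to apply the divergence theorem on $B\setminus B(x_{0},\eps)$; the boundary contribution on $\partial B$ will vanish because $\varphi$ is compactly supported in $B$, leaving only the term on $\partial B(x_{0},\eps)$. The algebraic engine will be that $G(\cdot)$ is divergence-free on smooth maps $v$ with $|v|>0$ and $\det\nabla v\neq 0$; the analytic engine will be the mollification scheme from Appendix A that was already used in Lemma \ref{salix4}.

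\textbf{Algebraic setup.} A direct calculation gives $\partial_{j}\zeta_{i}=\delta_{ij}|x|^{1-2q}-(2q-1)x_{i}x_{j}|x|^{-2q-1}$, which is symmetric in $(i,j)$ and yields $(\varphi\cdot\nabla)\zeta=\varphi/|x|^{2q-1}-(2q-1)(x\cdot\varphi)x/|x|^{2q+1}$. Consequently the LHS of \eqref{e:preEME} equals
\[
L(w):= -\!\int_{B}\!\bigl[\varphi_{j}(\partial_{j}\zeta_{i})G_{i}(w)+\zeta_{i}(\partial_{j}\varphi_{i})G_{j}(w)\bigr]\,dx.
\]
Separately, for any smooth $v$ with $|v|>0$ and $\det\nabla v\neq 0$, the Piola identity $\partial_{i}(\adj\nabla v)_{ij}=0$ together with $A\adj A=(\det A)\1$ combine (the two surviving terms forming a cancelling pair $\pm 3(\det\nabla v)|v|^{-3}$) to give $\div G(v)=0$. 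Using the symmetry of $\nabla\zeta$ to interchange $\varphi_j G_i \leftrightarrow \varphi_i G_j$ in the first term, this yields the pointwise identity
\[
\partial_{j}\bigl(\zeta_{i}\varphi_{i}G_{j}(v)\bigr)=\varphi_{j}(\partial_{j}\zeta_{i})G_{i}(v)+\zeta_{i}(\partial_{j}\varphi_{i})G_{j}(v)
\]
on the smooth region of $v$.

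\textbf{Mollification and divergence theorem.} I would mollify $w$ as in Appendix A to obtain $w^{(j)}$ smooth on $B\setminus B(x_{0},2^{-j-1})$, with $|w^{(j)}|$ bounded below and $\det\nabla w^{(j)}>0$ on its smooth region and with $w^{(j)}\to w$ in $W^{1,2q}$. Following the argument given for $F_{1}$ and $F_{2}$ in the first step of the proof of Lemma \ref{salix4}, and using that $\varphi/|x|^{2q-1}$, $(x\cdot\varphi)x/|x|^{2q+1}$ and $\zeta\cdot\nabla\varphi$ all lie in $L^{q'}(B)$ (because $\varphi\equiv 0$ on $B(0,\delta)$), the functional $L$ is continuous under strong $W^{1,2q}$ convergence, so $L(w^{(j)})\to L(w)$. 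Taking $\eps:=2^{-j}$, the pointwise identity above applies on $B\setminus B(x_{0},\eps)$, and the divergence theorem together with $\supp\varphi\subset\subset B$ yields
\[
\int_{B\setminus B(x_{0},\eps)}\!\!\bigl[\varphi_{j}(\partial_{j}\zeta_{i})G_{i}(w^{(j)})+\zeta_{i}(\partial_{j}\varphi_{i})G_{j}(w^{(j)})\bigr]dx=-\!\int_{\partial B(x_{0},\eps)}(\zeta\cdot\varphi)\bigl(G(w^{(j)})\cdot\nu\bigr)d\sch^{2},
\]
where $\nu$ is the outward unit normal of $B(x_{0},\eps)$.

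\textbf{Vanishing remainder and conclusion.} Finally I would show that the bulk remainder $\int_{B(x_{0},\eps)}|G(w^{(j)})|\,dx$ vanishes as $j\to\infty$; since the factors $\varphi_{j}(\partial_{j}\zeta_{i})$ and $\zeta_{i}(\partial_{j}\varphi_{i})$ are uniformly bounded on $\supp\varphi$ (which excludes the origin), this will close the loop and deliver \eqref{e:preEME}. Under $(I^{\prime})$, $|G(w^{(j)})|$ is uniformly bounded and the estimate is immediate. Under $(II^{\prime})$, the diffeomorphism structure of $w$ near $x_{0}$ forces $|G(w)|\leq C|x-x_{0}|^{-2}\in L^{1}_{\loc}$, and the mollification of Appendix A can be arranged to inherit this bound uniformly in $j$, again giving vanishing of the remainder. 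Combining this with $L(w^{(j)})\to L(w)$ produces \eqref{e:preEME}; the second assertion of the lemma is then immediate from Proposition \ref{p:innervars}. The main obstacle will be this last step under $(II^{\prime})$: securing a $j$-uniform local $L^{1}$ bound on $G(w^{(j)})$ near $x_{0}$ that respects both the diffeomorphism structure of $w$ and the convergence $w^{(j)}\to w$ in $W^{1,2q}$; once this is in hand, the rest is bookkeeping.
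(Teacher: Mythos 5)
Your proposal is essentially the paper's own argument: mollify via Appendix A, apply the divergence theorem on $B\setminus B(x_{0},\eps)$ (the paper phrases this as integration by parts on the $\zeta\cdot(\nabla\varphi)G$ term and observes the $\nabla\zeta$ contribution cancels the first term of $L$, which is exactly your symmetry rearrangement), and use the Piola identity together with $(\adj\nabla\wt)_{jk}[\nabla\wt-3\bar{\wt}\otimes\nabla\wt^{T}\bar{\wt}]_{kj}=0$ to conclude $\Div G=0$ on the smooth region. The ``obstacle'' you flag under $(II^{\prime})$ is in fact closed by the Appendix~A construction itself: since $1-\eta_{j}\equiv 1$ on $B(x_{0},2^{-(j+2)})$, the map $w^{(j)}$ agrees with $w$ (and hence $G(w^{(j)})=G(w)$) on that ball, so the local $L^{1}$ bound $|G(w)|\leq C|x-x_{0}|^{-2}$ coming from the diffeomorphism structure of $w$ near $x_{0}$ is automatically inherited uniformly in $j$, and your remainder estimate goes through.
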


\begin{proof}We give the proof in the case that $w$ satisfies condition $(I^{\prime})$.  Let $\eps >0$ and, as in the proof of Lemma \ref{salix4}, use the approximation procedure of Lemma \ref{a:appendixa} to find a function $w^{(j(\eps))}$ with the properties that:
\begin{itemize}\item[(i)] $w^{(j(\eps))}$ is smooth on $B\setminus B'(x_{0},\eps/2)$;
\item[(ii)] each $w^{(j(\eps))}$ satisfies $|w^{(j(\eps))}| \geq \frac{\tau_{0}}{2}$ almost everywhere on $B$, and
\item[(iii)] the sequence $w^{(j(\eps))} \to w$ in $W^{1,2q}(B;\mathbb{R}^{3})$ as $j(\eps) \to \infty$ (equiv. $\eps \to 0$).  
\end{itemize}
The same procedure works in the case that $w$ satisfies condition $(II^{\prime})$, with the difference that (ii) should be replaced by 
\begin{itemize}\item[(ii')] each $w^{(j)}$ vanishes once at $x_{0}$ and is locally a diffeomorphism.    \end{itemize}
This has no impact on the following proof.

Now, it is straightforward to deduce that 
\begin{equation*}\int_{B} \left((2q-1)\frac{(x \cdot \varphi) x}{|x|^{2q+1}}-\frac{\varphi}{|x|^{2q-1}}\right)\cdot G(w)  \,dx - \int_{B}\zeta(x) \cdot (\nabla \varphi) \, G(w) \,dx  \end{equation*}
is the limit as $\eps \to 0$ of 
\begin{equation*}\int_{B\setminus B_{\eps}} \left((2q-1)\frac{(x \cdot \varphi) x}{|x|^{2q+1}}-\frac{\varphi}{|x|^{2q-1}}\right)\cdot G(w^{(j(\eps))})  \,dx - \int_{B\setminus B_{\eps}}\zeta(x) \cdot (\nabla \varphi) \, G(w^{(j(\eps))}) \,dx,\end{equation*}
where $B_{\eps}:=B(x_{0},\eps)$.  For brevity, write $\tilde{w}$ for $w^{(j(\eps))}$ in the following.   Now
\begin{equation*}\begin{split} -\int_{B\setminus B_{\eps}}\zeta(x) \cdot (\nabla \varphi) \, G(\wt) \,dx & = \int_{\partial B_{\eps}}(\zeta \cdot \varphi)(G(\wt) \cdot \nu) \,d\sch^{2} \\
& + \int_{B\setminus B_{\eps}} \left(\nabla \zeta \cdot (\varphi \otimes G(\wt)) +(\zeta \cdot \varphi) \, \Div (G(\wt))\right)\,dx,
\end{split}
\end{equation*}
where $\nu(x)=\overline{x-x_{0}}$ is the outward pointing normal to $\partial B_{\eps}$.    Notice that 
\[ \nabla \zeta \cdot (\varphi \otimes G(\wt)) = \frac{\varphi}{|x|^{2q-1}}-(2q-1)\frac{(x \cdot \varphi) x}{|x|^{2q+1}},\]
and hence
\begin{equation*}\begin{split}& \int_{B\setminus B_{\eps}} \left((2q-1)\frac{(x \cdot \varphi) x}{|x|^{2q+1}}-\frac{\varphi}{|x|^{2q-1}}\right)\cdot G(\wt)  \,dx - \int_{B\setminus B_{\eps}}\zeta(x) \cdot (\nabla \varphi) \, G(\wt) \,dx \\ & \quad \quad \quad \quad \quad \quad \quad \quad  = 
\int_{\partial B_{\eps}}(\zeta \cdot \varphi)(G(\wt) \cdot \nu) \,d\sch^{2} + \int_{B\setminus B_{\eps}} (\zeta \cdot \varphi) \, \Div (G(\wt))\,dx.
\end{split}
\end{equation*} 
Next, a short calculation shows that
\[ \Div G(\wt) = \frac{1}{|\wt|^{3}}(\adj \nabla \wt)_{jk}  \left[\nabla \wt - 3 \bar{\wt} \otimes \nabla \wt^{T}\bar{\wt}\right]_{kj} + |\wt|^{-3}\wt_{k} (\cof \nabla \wt)_{kj,j}.\]
Since $\wt$ is smooth in $B\setminus B_{\eps}$, and because $\cof \nabla \wt$ is a null Lagrangian, the second term vanishes identically.   To simplify the first we apply the identity
\[\cof F \cdot  v \otimes F^{T}v = \det F  \ \ \ \ v \in \mathbb{S}^{2}\]
with the choice $F=\nabla \wt$ and $v=\bar{\wt}$.  This gives
\begin{equation*}\begin{split} 
(\adj \nabla \wt)_{jk}  \left[\nabla \wt - 3 \bar{\wt} \otimes \nabla \wt^{T}\bar{\wt}\right]_{kj} & = \tr (\det \nabla \wt \, \1) - 3 \det \nabla \wt,
\end{split}\end{equation*}
which vanishes identically.   In conclusion, only the boundary term remains, i.e.,
\begin{equation*}\begin{split}\int_{B\setminus B_{\eps}} \left((2q-1)\frac{(x \cdot \varphi) x}{|x|^{2q+1}}-\frac{\varphi}{|x|^{2q-1}}\right)\cdot G(\wt)  \,dx &  - \int_{B\setminus B_{\eps}}\zeta(x) \cdot (\nabla \varphi) \, G(\wt) \,dx \\  & = 
\int_{\partial B_{\eps}}(\zeta \cdot \varphi)(G(\wt) \cdot \nu) \,d\sch^{2}
\end{split}
\end{equation*} 
The conclusions of the Lemma now follow from the calculations above, together with the result of Proposition \ref{p:innervars}.
\end{proof}

Next, we show that $\partial_{\eps}\arrowvert_{\eps=0}K(w^{\eps})$ can be calculated in certain cases.

\begin{lem}\label{l:evalbt}Let $w$ be an admissible map satisfying condition $(II^{\prime})$ and vanishing at $x_{0} \neq 0$.   Let $\varphi$ be a test function which vanishes in a neighbourhood of the origin not containing $x_{0}$, and let $w^{\eps}(x)=w(x+\eps \varphi(x))$ be the corresponding inner variation of $w$.   Then $\partial_{\eps}\arrowvert_{\eps = 0}K(w^{\eps})$ exists and satisfies
\begin{equation}\label{e:bt3}\partial_{\eps}\arrowvert_{\eps = 0} K(w^{\eps}) = \det \nabla w(x_{0})(\zeta \cdot \varphi)(x_{0})\int_{\mathbb{S}^{2}}\frac{1}{|\nabla w(x_{0})y|^{3}} \,d\sch^{2}(y). \end{equation}
\end{lem}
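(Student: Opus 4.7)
The plan is to evaluate the boundary-integral representation of $\partial_{\eps}\arrowvert_{\eps=0}K(w^{\eps})$ furnished by Lemma \ref{l:schumann}, namely
\[ \partial_{\eps}\arrowvert_{\eps=0}K(w^{\eps}) = \lim_{\eps \to 0}\int_{\partial B(x_{0},\eps)}(\zeta \cdot \varphi)\bigl(G(w^{(j(\eps))}) \cdot \nu\bigr)\,d\sch^{2},\]
by Taylor-expanding the smooth map $w$ around $x_{0}$ and reducing the boundary integral to an integral over the unit sphere. Since condition $(II^{\prime})$ makes $w$ a $C^{1}$ diffeomorphism on a neighbourhood $U$ of $x_{0}$ with $w(x_{0})=0$ and $\det \nabla w(x_{0})>0$, the first step is to remove the mollification $w^{(j(\eps))}$ in favour of $w$ itself. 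For all sufficiently small $\eps$ the sphere $\partial B(x_{0},\eps)$ lies in $U$, where $w$ is smooth; the properties of the mollification in Appendix A guarantee $w^{(j(\eps))} \to w$ together with $\nabla w^{(j(\eps))} \to \nabla w$ uniformly on $\partial B(x_{0},\eps)$, so the mollified integrand can be replaced by its un-mollified counterpart up to an error that vanishes with $\eps$.

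Next I would parametrise $\partial B(x_{0},\eps)$ by $x = x_{0}+\eps y$, $y \in \mathbb{S}^{2}$, so that $\nu=y$ and $d\sch^{2} = \eps^{2}\,d\sch^{2}(y)$. Taylor expansion at $x_{0}$ gives
\[w(x_{0}+\eps y) = \eps \nabla w(x_{0})y + o(\eps), \qquad \nabla w(x_{0}+\eps y) = \nabla w(x_{0}) + o(1),\]
and since $\nabla w(x_{0})$ is invertible, $|\nabla w(x_{0})y|$ is bounded below on $\mathbb{S}^{2}$ by a positive constant. Using the standard identity $\adj F \cdot F = (\det F)\mathbf{1}$ with $F = \nabla w(x_{0})$, one obtains $\adj \nabla w(x_{0})\,\nabla w(x_{0})y = \det \nabla w(x_{0})\,y$, and hence
\[ G(w)(x_{0}+\eps y)\cdot y \;=\; \frac{\det \nabla w(x_{0})}{\eps^{2}\,|\nabla w(x_{0})y|^{3}} + o(\eps^{-2}) \qquad \textrm{as } \eps \to 0.\]

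The final step is to substitute this expansion into the boundary integral and pass to the limit. The $\eps^{2}$ from the surface measure cancels the $\eps^{-2}$ from $G(w)\cdot\nu$, and by continuity of $\zeta\cdot\varphi$ at $x_{0}\neq 0$ together with continuity of $\nabla w$ at $x_{0}$, the dominated convergence theorem on $\mathbb{S}^{2}$ yields
\[\lim_{\eps \to 0}\int_{\partial B(x_{0},\eps)}(\zeta\cdot\varphi)\bigl(G(w)\cdot \nu\bigr)\,d\sch^{2} = (\zeta\cdot\varphi)(x_{0})\,\det \nabla w(x_{0})\int_{\mathbb{S}^{2}}\frac{d\sch^{2}(y)}{|\nabla w(x_{0})y|^{3}},\]
which is \eqref{e:bt3}.

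The main technical obstacle is the first step: because $j(\eps)\sim |\ln\eps|/\ln 2$ the mollification scale is comparable to the radius of the sphere over which the integral is taken, so one cannot simply invoke a fixed-scale approximation estimate. The saving feature is that on the fixed open set $U$ the map $w$ is already smooth, so the mollified $w^{(j(\eps))}$ converges uniformly (together with its first derivatives) to $w$ on $\partial B(x_{0},\eps)$, and the resulting uniform error is $o(1)$ against the integrand, which is bounded by a multiple of $\eps^{-2}$. Provided the mollification in Appendix A is chosen so that the $C^{1}$ error on $U$ beats $\eps^{2}$ (or more naturally, in a one-sided sense, since the surface measure is of order $\eps^{2}$), the replacement is legitimate and the rest of the argument is a routine computation.
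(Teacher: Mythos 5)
Your plan matches the paper's proof of Lemma \ref{l:evalbt}: both invoke Lemma \ref{l:schumann} to reduce to the boundary integral, replace the mollified $w^{(j(\eps))}$ by $w$ using the smoothness of $w$ near $x_{0}$, freeze $\nabla w$ at $x_{0}$, use $\adj F_{0}\, F_{0}=(\det F_{0})\1$, and scale the sphere integral to $\mathbb{S}^{2}$, with the $\eps^{2}$ from the surface measure cancelling the $\eps^{-2}$ from $G(w)\cdot\nu$. Your additional remark about needing $\nabla w^{(j(\eps))}\to\nabla w$ (not merely $C^{0}$ convergence) on the fixed neighbourhood $U$ is a legitimate refinement of a point the paper states somewhat tersely, but the route is the same.
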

   
\begin{proof}By Lemma \ref{l:schumann},
\begin{equation}\label{e:deriveval}\partial_{\eps}\arrowvert_{\eps = 0} K(w^{\eps}) = \lim_{\eps \to 0}\int_{\partial B(x_{0},\eps)}(\zeta \cdot \varphi)(G(w^{(j(\eps))}) \cdot \nu) \,d\sch^{2}, \end{equation}
 where, owing to the hypothesis that $w$ is a diffeomorphism in a neighbourhood of $x_{0}$,  $w^{(j(\eps))} \to w$ locally uniformly in $C^{0}$ norm.    Therefore we can replace $w^{(j(\eps))}$ with $w$ in \eqref{e:deriveval}.   Let $F_{0}=\nabla w(x_{0})$ for brevity.  In the following we take  $\nu=\overline{x-x_{0}}$ on $\partial B(x_{0},\eps)$ (in keeping with the proof of Lemma \ref{l:schumann}) and again apply the hypothesis that $w$ is a diffeomorphism in a neighbourhood of $x_{0}$, giving
 \begin{equation*}\begin{split} \int_{\partial B_{\eps}}(\zeta \cdot \varphi)(G(w) \cdot \nu) \,d\mathcal{H}^{2} & = (\zeta \cdot \varphi)(x_{0})\int_{\partial B_{\eps}} \frac{\adj F_{0} \, F_{0}(x-x_{0})}{|F_{0}(x-x_{0})|^{3}}\cdot \frac{x-x_{0}}{|x-x_{0}|} \,d\sch^{2}(x)\\ &
 \quad \quad \quad \quad \quad \quad \quad \quad \quad \quad \quad \quad\quad \quad \quad \quad \quad \quad \quad \quad + \mathcal{O}(\eps)\\ 
 & = \det F_{0}\,(\zeta \cdot \varphi)(x_{0})\int_{\mathbb{S}^{2}}\frac{1}{|F_{0}y|^{3}} \,d\sch^{2}(y)+\mathcal{O}(\eps). \end{split}
 \end{equation*}
Letting $\eps \to 0$ gives \eqref{e:bt3}.
 \end{proof}

\begin{prop}\label{p:furthersmaller} Let $w$ be an admissible map satisfying condition $(II^{\prime})$ and vanishing at $x_{0} \neq 0$.   Let $\varphi$ be a test function which vanishes in a neighbourhood of the origin not containing $x_{0}$, and suppose $\varphi(x_{0}) \neq 0$.  Let $w^{\eps}(x)=w(x+\eps \varphi(x))$ be the corresponding inner variation of $w$.  Define $x(\eps)$ by 
\begin{equation}\label{e:defxeps} x(\eps)+\eps \varphi(x(\eps))=x_{0},\end{equation}
so that, for sufficiently small $\eps$, $x(\eps)$ is the unique zero of $w^{\eps}$.
Then there is $\eps_{0}>0$ such that 
\begin{itemize}\item[(i)]$K(w^{\eps})<K(w)$ for $0<\eps<\eps_{0}$ if and only if $\dot{x}(0)\cdot x_{0}  >0$; 
\item[(ii)]$K(w^{\eps})> K(w)$ for $0<\eps<\eps_{0}$ if and only if $\dot{x}(0)\cdot x_{0} < 0$.
\end{itemize}
\end{prop}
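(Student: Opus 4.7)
The plan is to invoke Lemma \ref{l:evalbt} and then translate its sign information into a statement about $\dot{x}(0)\cdot x_{0}$ by implicit differentiation of the defining equation \eqref{e:defxeps}.  The proof is really just a sign computation, since all the analytical work has been absorbed into Lemma \ref{l:evalbt}.

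First I would verify that the hypotheses of Lemma \ref{l:evalbt} hold: since $\varphi$ vanishes in a neighbourhood of $0$ not containing $x_{0}$, the required localization is available and Lemma \ref{l:evalbt} yields
\[ \partial_{\eps}|_{\eps=0}K(w^{\eps}) = \det \nabla w(x_{0})\,(\zeta\cdot \varphi)(x_{0})\int_{\mathbb{S}^{2}}\frac{d\sch^{2}(y)}{|\nabla w(x_{0})y|^{3}}.\]
Because $w$ satisfies condition $(II')$ it is a local diffeomorphism near $x_{0}$, so $\det \nabla w(x_{0}) > 0$, and the spherical integral is clearly strictly positive and finite.  Using $\zeta(x) = x/|x|^{2q-1}$ and $x_{0}\neq 0$,
\[ (\zeta\cdot \varphi)(x_{0}) = \frac{x_{0}\cdot \varphi(x_{0})}{|x_{0}|^{2q-1}},\]
so that $\partial_{\eps}|_{\eps=0}K(w^{\eps})$ carries the same sign as $x_{0}\cdot \varphi(x_{0})$.

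Next, applying the implicit function theorem to $F(x,\eps):= x+\eps\varphi(x)-x_{0}$ --- note that $\partial_{x}F(x_{0},0) = \mathbf{1}$ is invertible --- produces a smooth branch $\eps\mapsto x(\eps)$ near $\eps = 0$ with $x(0)=x_{0}$.  Differentiating \eqref{e:defxeps} at $\eps = 0$ gives $\dot{x}(0)+\varphi(x_{0})=0$, hence $\dot{x}(0)\cdot x_{0} = -\,x_{0}\cdot \varphi(x_{0})$.  Combining with the previous formula,
\[ \partial_{\eps}|_{\eps=0}K(w^{\eps}) = -\,C\;\dot{x}(0)\cdot x_{0}\]
for an explicit strictly positive constant $C = C(x_{0},\nabla w(x_{0}),q)$.

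Conclusions (i) and (ii) then follow by a Taylor expansion in $\eps$: if $\dot{x}(0)\cdot x_{0} > 0$ then the right-hand derivative of $\eps \mapsto K(w^{\eps})$ at $\eps=0$ is strictly negative, so $K(w^{\eps}) < K(w)$ on some interval $(0,\eps_{0})$; if instead $\dot{x}(0)\cdot x_{0} < 0$ the derivative is strictly positive and the reverse inequality $K(w^{\eps}) > K(w)$ holds nearby, which simultaneously yields (ii) and supplies the contrapositive needed for the backward implication of (i).  These two cases are exhaustive apart from the non-generic borderline $x_{0}\cdot\varphi(x_{0})=0$, in which the first-order variation vanishes and no strict inequality can be asserted at first order.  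There is no substantive obstacle beyond this careful bookkeeping; the entire analytical burden has already been discharged by Lemma \ref{l:evalbt}.
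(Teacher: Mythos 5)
Your proof is correct and follows essentially the same route as the paper: invoke Lemma \ref{l:evalbt} for the derivative formula, observe that $\det\nabla w(x_0)$ and the spherical integral are strictly positive, convert $(\zeta\cdot\varphi)(x_0)$ into $-|x_0|^{1-2q}\,x_0\cdot\dot{x}(0)$ via $\dot{x}(0)=-\varphi(x_0)$, and read off the sign. (The paper obtains $\dot{x}(0)=-\varphi(x_0)$ by identifying $x(\eps)=\psi(x_0;\eps)$ from Proposition \ref{p:innervars}, whereas you redo it via the implicit function theorem; this is the same computation, and your explicit remark about the borderline case $x_0\cdot\varphi(x_0)=0$, which the paper passes over silently, is a fair observation about the ``if and only if'' claim.)
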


\begin{rem}\emph{The existence of a smooth curve of points $x(\eps)$ satisfying \eqref{e:defxeps} and subject to $x(0)=x_{0}$ follows from the Implicit Function Theorem.  This is the origin of the requirement that $\eps$ be sufficiently small in the statement above.}\end{rem}

\begin{rem}\emph{Because $x_{0}=x(0)$, statement (i) says that $K(w^{\eps})$ locally decreases with increasing $\eps$ if and only if $\partial_{\eps}\arrowvert_{\eps=0} |x(\eps)|^{2} > 0$.   That is,  $K(w^{\eps})$ decreases if and only if $x(\eps)$ moves further away from the origin.  A corresponding interpretation applies to statement (ii).}\end{rem}

\begin{proof} In the notation of Proposition \ref{p:innervars} we write $x(\eps)=\psi(x_{0};\eps)$.    Then, since $\partial_{\eps}\arrowvert_{\eps=0}\psi(x;0)=-\varphi(x)$ for any $x$, it is clear that $\dot{x}(0)=-\varphi(x_{0})$.   Recall that $\zeta(x_{0})=x_{0}|x_{0}|^{1-2q}$, so that
\begin{equation}\label{e:signs} \zeta(x_{0})\cdot \varphi(x_{0}) = - |x_{0}|^{1-2q} x_{0} \cdot \dot{x}(0).\end{equation}
By \eqref{e:bt3}, and since $\det \nabla w(x_{0}) >0$,  the sign of $\partial_{\eps}\arrowvert_{\eps = 0} K(w^{\eps})$ is exactly that of $\zeta(x_{0})\cdot \varphi(x_{0})$, which, by \eqref{e:signs} is opposite that of $x_{0} \cdot \dot{x}(0)$.   Parts (i) and (ii) of the proposition follow easily.   
\end{proof}

In view of Proposition \ref{p:furthersmaller}, it is tempting to conclude that $K(\bf{i})$ is not a local minimum among admissible $w$ satisfying condition $(II')$ and vanishing at $x_{0} \neq 0$.  One might expect, for example, to be able to lower the energy of the identity map (as measured by $K$) by moving its zero slightly away from the origin.   However, this is not necessarily the case.  The reason is that the derivative of $K$ with respect to inner variations, as calculated in \eqref{e:bt3}, can assume almost any behaviour as a function of $\tau=|x_{0}|$ as  $\tau \to 0$, as we shall show below.  Thus $K(w) > K(\mathbf{i})$ for $w$ `close' to $\mathbf{i}$ is in principle consistent with statements  (i) and (ii) of Proposition \ref{p:furthersmaller}, though such an energy landscape would, admittedly, seem rather strange.

\begin{prop}\label{p:laster}  There exist admissible deformations $w(\cdot\,;\tau)$ of the identity map $\mathbf{i}$ satisfying condition $(II')$ and such that: 
\begin{itemize}
\item[(a)] $w(\tau e_{1};\tau)=0$ for all $\tau \in [0,\tau_{0})$;
\item[(b)] $||w(\cdot\,,\tau)-\mathbf{i}||_{W^{1,\infty}(B)} \to 0$ as $\tau \to 0$;
\item[(c)] $K(w(\cdot\,;\tau)) \to K(\mathbf{i})$ as $\tau \to 0$; 
\item[(d)] As a function of $\tau > 0$, the derivative $\partial_{\eps}\arrowvert_{\eps = 0} K(w^{\eps})$ can be made to exhibit any behaviour as $\tau \to 0$,  where $w^{\eps}$ is a suitably chosen inner variation of $w(\cdot\,;\tau)$ satisfying the conditions of Lemma \ref{l:evalbt}.
\end{itemize}

 \end{prop}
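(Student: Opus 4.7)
The plan is to build $w(\cdot;\tau)$ as an explicit compactly supported ``dimple'' of $\mathbf{i}$ vanishing at $\tau e_1$, verify (a)--(c) by direct scaling, and exploit the freedom in choosing the test function $\varphi_\tau$ in the inner variation so that formula \eqref{e:bt3} of Lemma \ref{l:evalbt} delivers any prescribed function of $\tau$ in (d).

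\textbf{Construction, and items (a),\,(b).} Fix a smooth bump $\rho:\mathbb{R}^{3}\to\mathbb{R}$ with $\rho(0)=1$ and $\supp\rho\subset B(0,1)$, and pick an exponent $\alpha\in(2q/3,1)$ (non-empty since $2q<3$). Define
\[w(x;\tau):= x-\tau e_1\,\rho\!\left(\frac{x-\tau e_1}{\tau^{\alpha}}\right).\]
Then $w(\tau e_1;\tau)=0$, giving (a), and $v:=w-\mathbf{i}$ satisfies $|v|\le\tau$, $|\nabla v|\le C\tau^{1-\alpha}$, so $\|v\|_{W^{1,\infty}(B)}\to 0$, giving (b). For sufficiently small $\tau$, $\nabla w=\mathbf{1}+\nabla v$ is uniformly close to $\mathbf{1}$, hence invertible with positive determinant, and a short contraction argument shows $w$ is globally injective; thus $w(\cdot;\tau)$ is an admissible deformation satisfying $(II')$ with unique zero at $\tau e_1$.

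\textbf{Item (c).} Since $G(w)=G(\mathbf{i})$ off $\supp v$,
\[K(w(\cdot;\tau))-K(\mathbf{i}) = \int_{B(\tau e_1,\tau^{\alpha})}\zeta\cdot\bigl(G(w)-G(\mathbf{i})\bigr)\,dx.\]
On this region $|\zeta|\sim\tau^{2-2q}$, $|G(\mathbf{i})|=|x|^{-2}\sim\tau^{-2}$, and $|G(w)|\lesssim|x-\tau e_1|^{-2}$ (because $w$ is a local diffeomorphism there with $\nabla w\approx\mathbf{1}$). The $G(\mathbf{i})$ contribution is therefore $O(\tau^{3\alpha-2q})$ and the $G(w)$ contribution is $O(\tau^{\alpha+2-2q})$, both $o(1)$ by $\alpha>2q/3$ (which in our range $q<3/2$ also implies $\alpha>2q-2$).

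\textbf{Item (d).} Let $F_0:=\nabla w(\tau e_1;\tau)=\mathbf{1}+O(\tau^{1-\alpha})$, so $\det F_0\to 1$ and $I(\tau):=\int_{\mathbb{S}^{2}}|F_0 y|^{-3}\,d\sch^{2}(y)\to 4\pi$. Given any target function $g:(0,\tau_0)\to\mathbb{R}$, fix $\eta\in C^{\infty}_{c}(B(0,1))$ with $\eta(0)=1$ and set
\[\varphi_\tau(x) := c(\tau)\,\eta\!\left(\frac{2(x-\tau e_1)}{\tau}\right)e_1, \qquad c(\tau):=\frac{g(\tau)}{\det F_0\cdot\tau^{2-2q}\,I(\tau)}.\]
Then $\supp\varphi_\tau\subset B(\tau e_1,\tau/2)$, a set disjoint from the neighbourhood $B(0,\tau/2)$ of the origin; so $\varphi_\tau$ is admissible in Lemma \ref{l:evalbt}, and since $(\zeta\cdot\varphi_\tau)(\tau e_1)=\tau^{2-2q}c(\tau)$, formula \eqref{e:bt3} gives $\partial_\eps|_{\eps=0}K(w^\eps)=g(\tau)$ on the nose. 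The main obstacle is the scaling balance in (c): the blow-up $|\zeta|\sim\tau^{2-2q}$ and the near-cavitation singularity of $G(w)$ on the perturbation region must both be absorbed by the volume factor, and it is precisely the constraint $\alpha>2q/3$ that makes every scale cooperate; once (c) is in hand, (d) is a matter of inverting \eqref{e:bt3} for $c(\tau)$.
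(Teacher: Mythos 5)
Your construction is correct and achieves all four items, but it takes a genuinely different route from the paper. The paper (Lemma~\ref{a:appendixb}) builds $w(\cdot;\tau)$ as the \emph{inverse} of a perturbation $y \mapsto y + \tau\phi(y)$ supported on $B(0,N\tau)$, and the verification of (c) there proceeds by a rather involved change of variables, a region decomposition into $\Pi^{\pm}$, and a power-series expansion of $|z+\tau\phi(z)|$. Your $w(x;\tau)=x-\tau e_{1}\,\rho\!\left(\frac{x-\tau e_{1}}{\tau^{\alpha}}\right)$ is a \emph{direct} compactly supported dimple, which makes (a) and (b) immediate and, in particular, gives a clean $\tau^{1-\alpha}$ rate in $W^{1,\infty}$. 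For (d), your idea is essentially the paper's: pick an inner variation whose value at $x_{0}=\tau e_{1}$ you control, and read the answer from formula~\eqref{e:bt3}; your refinement of solving for $c(\tau)$ to hit $g(\tau)$ \emph{exactly} (rather than up to lower-order corrections, as the paper does) is a small improvement.

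However, there is a genuine flaw in your scaling argument for (c). You assert that on $B(\tau e_{1},\tau^{\alpha})$ one has the pointwise bounds $|\zeta|\sim\tau^{2-2q}$ and $|G(\mathbf{i})|\sim\tau^{-2}$. Since $\alpha<1$, the perturbation ball $B(\tau e_{1},\tau^{\alpha})$ has radius $\tau^{\alpha}>\tau$ and therefore \emph{contains the origin}; both $|\zeta(x)|=|x|^{2-2q}$ and $|G(\mathbf{i})(x)|=|x|^{-2}$ blow up as $x\to 0$, so neither is comparable to a power of $\tau$ on this set, and a sup-times-volume estimate would in fact give $+\infty$. The correct argument replaces the pointwise products by integrals and uses the integrability supplied by $2q<3$: for instance
\[
\int_{B(\tau e_{1},\tau^{\alpha})}\bigl|\zeta\cdot G(\mathbf{i})\bigr|\,dx
=\int_{B(\tau e_{1},\tau^{\alpha})}|x|^{-2q}\,dx
\le\int_{B(0,2\tau^{\alpha})}|x|^{-2q}\,dx
\sim\tau^{\alpha(3-2q)}\to 0,
\]
and a similar split (near $0$, near $\tau e_{1}$, and the annular region in between, using $|w|\gtrsim|x-\tau e_{1}|$) shows the $G(w)$ contribution is also $O(\tau^{\alpha(3-2q)})$. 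Note that $\alpha(3-2q)>0$ automatically, so the auxiliary restriction $\alpha>2q/3$ you impose is an artefact of the faulty pointwise scaling and is not actually needed; the only binding constraint on $\alpha$ is $\alpha<1$ from item (b). Your exponents $3\alpha-2q$ and $\alpha+2-2q$ happen to be valid (looser) upper bounds in your chosen range, so the conclusion stands, but the derivation should be repaired along the above lines.
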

\begin{proof}  The existence of maps $w(\cdot\,;\tau)$ satisfying parts (a)-(c) in the statement of the proposition is assured by Lemma \ref{a:appendixb} below.  It remains to prove part (d) here.  Let $f:[0,\tau_{0}) \to \mathbb{R}$ be a function.   Let $w^{\eps}$ be an inner variation of the form
\[w^{\eps}(x;\tau)=w(x+\eps \mu(x);\tau)\]
where $\mu: B \to B$ is a smooth, compactly supported function such that $\mu$ is zero in a small neighbourhood of $0$. For each fixed $\tau$ choose $\mu(\tau e_{1})$ so that $e_{1}\cdot \mu(\tau e_{1}) = f(\tau)$.   Note then that $\zeta(\tau e_{1}) \cdot \mu(\tau e_{1})=\tau^{2-2q}f(\tau)$.
Equation \eqref{e:bt3} in Lemma \ref{l:evalbt} implies that \begin{equation*}\partial_{\eps}\arrowvert_{\eps = 0} K(w^{\eps}) = \det \nabla w(\tau e_{1};\tau)(\zeta \cdot \mu)(\tau e_{1})\int_{\mathbb{S}^{2}}\frac{1}{|\nabla w(\tau e_{1})y|^{3}} \,d\sch^{2}(y). \end{equation*}
By part (b),
\[  4 \pi - \tau C \leq \det \nabla w(\tau e_{1};\tau) \int_{\mathbb{S}^{2}}\frac{1}{|\nabla w(\tau e_{1};\tau)y|^{3}}\,d\sch^{2} \leq 4\pi + C\tau\]
for a fixed $C>0$ and all $\tau \in [0,\tau_{0})$.  Thus
\begin{equation*}\big|\partial_{\eps}\arrowvert_{\eps = 0} K(w^{\eps}) - \tau^{2-2q}f(\tau)\big| \leq C\tau \ \ \ \textrm{for} \ 0 < \tau < \tau_{0}.\end{equation*}

\end{proof}

\begin{appendix}
\section{Appendix A}\label{appA}
Here we give details of the mollification and approximation procedure used in Lemma \ref{salix4}.

\begin{lem}\label{a:appendixa}Let $w \in W^{1,2q}(B,\mathbb{R}^{3})$, where $2q<3$, satisfy condition $(I^{\prime})$ with $x_{0}$ its point of discontinuity,  as defined in Lemma \ref{salix4}.   Let $\tau_{0} >0$ be such that $|w(x)| \geq \tau_{0}$ for $x \in B$, $x \neq x_{0}$, the existence of such a $\tau_{0}$ being guaranteed by condition $(I^{\prime})$. 
Then there exists a sequence of functions $w^{(j)}$ in $W^{1,2q}(B;\mathbb{R}^{3})$ such that
\begin{itemize}

\item[(i)] $w^{(j)}$ satisfies $|w^{(j)}| \geq \frac{\tau_{0}}{2}$ on $B \setminus \{x_{0}\}$;
\item[(ii)] $w^{(j)}$ is smooth in $B \setminus B(x_{0},2^{-(j+1)})$;
\item[(iii)] $w^{(j)} \to w$ in $W^{1,2q}(B;\mathbb{R}^{3})$ as $j \to \infty$.
\end{itemize}
Moreover, estimate \eqref{e:shosta} holds for this sequence.
\end{lem}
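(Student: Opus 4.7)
The plan is to build $w^{(j)}$ by a cutoff-mollification in which both the cutoff radius and the mollification radius shrink to zero with $j$. Specifically, I would set $r_{j} := 2^{-(j+1)}$, choose a mollification parameter $\delta_{j} \leq 2^{-(j+3)}$ (to be shrunk further as explained below), and take $\eta_{\delta_{j}}$ to be a standard symmetric mollifier supported in $B(0,\delta_{j})$. Let $\chi_{j} \in C^{\infty}(B)$ be a cutoff with $\chi_{j} \equiv 0$ on $B(x_{0},r_{j}/2)$, $\chi_{j} \equiv 1$ on $B \setminus B(x_{0},r_{j})$ and $|\nabla \chi_{j}| \leq C/r_{j}$. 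The approximant would then be
\[ w^{(j)}(x) := (1-\chi_{j}(x))\,w(x) + \chi_{j}(x)\,(w \ast \eta_{\delta_{j}})(x), \]
where I would first extend $w$ to a slightly larger ball by a standard Sobolev extension so that the convolution makes sense up to $\partial B$.

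Property (ii) is immediate: on $B \setminus B(x_{0}, 2^{-(j+1)})$ the cutoff $\chi_{j}$ is identically $1$, so $w^{(j)} = w \ast \eta_{\delta_{j}}$ is $C^{\infty}$ there. For (i) I would invoke condition $(I')$: $w$ is continuous and satisfies $|w| \geq \tau_{0}$ on the compact set $\overline{B} \setminus B(x_{0}, r_{j}/4)$. Since $\delta_{j} < r_{j}/4$, at any $x$ with $\dist(x,x_{0}) \geq r_{j}/2$ the value $(w \ast \eta_{\delta_{j}})(x)$ samples $w$ only on this continuity set; uniform continuity there (on which basis I would shrink $\delta_{j}$ if necessary) then gives $|w \ast \eta_{\delta_{j}} - w| \leq \tau_{0}/2$ wherever $\chi_{j} > 0$, and writing $w^{(j)} = w + \chi_{j}(w \ast \eta_{\delta_{j}} - w)$ yields $|w^{(j)}| \geq \tau_{0}/2$.

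For (iii) I would decompose
\[ \nabla w^{(j)} - \nabla w = \chi_{j}\bigl((\nabla w) \ast \eta_{\delta_{j}} - \nabla w\bigr) + (\nabla \chi_{j}) \otimes (w \ast \eta_{\delta_{j}} - w). \]
The first term vanishes in $L^{2q}(B)$ by the standard theory of mollification. The second is supported in the annulus $B(x_{0}, r_{j}) \setminus B(x_{0}, r_{j}/2)$, whose volume is of order $r_{j}^{3}$; combining $|\nabla \chi_{j}| \leq C/r_{j}$ with a uniform bound $|w \ast \eta_{\delta_{j}} - w| \leq \omega_{j}$ on that annulus (with $\omega_{j} \to 0$ by uniform continuity of $w$ on $\overline{B}\setminus B(x_{0},r_{j}/4)$), its $L^{2q}$ norm is bounded by $C\,\omega_{j}\,r_{j}^{(3-2q)/(2q)}$, which tends to zero precisely because $2q < 3$.

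For \eqref{e:shosta}, on $A_{\eps} := B(x_{0}, 2^{-j(\eps)}) \setminus B(x_{0}, 2^{-(j(\eps)+1)})$ the cutoff $\chi_{j(\eps)}$ is identically $1$, so $w^{(j(\eps))} = w \ast \eta_{\delta_{j(\eps)}}$ there, and a Fubini argument gives
\[ \int_{A_{\eps}} |\nabla w^{(j(\eps))}|\,dx \leq \int_{A_{\eps}} \bigl(|\nabla w| \ast \eta_{\delta_{j(\eps)}}\bigr)\,dx \leq \int_{N_{\delta_{j(\eps)}}(A_{\eps})} |\nabla w|\,dx, \]
where $N_{\delta}(A)$ denotes the $\delta$-thickening of $A$. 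Since $\delta_{j(\eps)} \leq 2^{-(j(\eps)+3)}$, one checks $N_{\delta_{j(\eps)}}(A_{\eps}) \subset B(x_{0}, 2^{1-j(\eps)}) \setminus B(x_{0}, 2^{-(j(\eps)+2)})$, which delivers \eqref{e:shosta}. The main obstacle I expect is the triple balancing of $\delta_{j}$ against $r_{j}$: it must be small enough to preserve the pointwise lower bound in (i), to fit the $\delta_{j}$-thickening inside the prescribed outer annulus in \eqref{e:shosta}, and still to make the commutator $(\nabla \chi_{j}) \otimes (w \ast \eta_{\delta_{j}} - w)$ small in $L^{2q}$; the last is where the hypothesis $2q<3$ enters essentially. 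All three constraints are met by first setting $\delta_{j} = 2^{-(j+3)}$ and then, if needed, shrinking further against the local modulus of continuity of $w$ on $\overline{B}\setminus B(x_{0},r_{j}/4)$.
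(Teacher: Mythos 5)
Your proposal follows essentially the same cutoff-plus-mollification construction as the paper's proof, with matching choices of cutoff radii, mollification scale ($\delta_{j}\lesssim 2^{-(j+3)}$, mirroring the paper's $l(j)>2^{j+3}$), and the same Fubini/thickening argument for \eqref{e:shosta}. Your decomposition of $\nabla w^{(j)}-\nabla w$ into the commutator term $(\nabla\chi_{j})\otimes(w\ast\eta_{\delta_{j}}-w)$ plus the mollified-gradient term $\chi_{j}\bigl((\nabla w)\ast\eta_{\delta_{j}}-\nabla w\bigr)$ is, if anything, slightly more careful than the paper's displayed estimate, which records only the commutator contribution; both lead to the same conclusion.
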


\begin{proof}Fix a positive integer $j$ and let $\eta_{j}:\mathbb{R}^{3} \to \mathbb{R}$ be a smooth function such that 
\begin{itemize}\item[(a)] $\eta_{j}(x) = 0$ if $x \in B(x_{0},2^{-(j+2)})$;
\item[(b)] $\eta_{j}(x)=1$ if $x \in B \setminus B(x_{0},2^{-(j+1)})$;
\item[(c)] $0 \leq \eta_{j}(x) \leq 1$, with $|\nabla \eta_{j}(x)|\leq c2^{j+2}$  for some $c>0$, and for all $x$ in $B(x_{0},2^{-(j+1)})\setminus B(x_{0},2^{-(j+2)})$,
\end{itemize} 
Thus $1-\eta_{j}$ is a cut-off function whose gradient has support in the annular region $B(x_{0},2^{-(j+1)})\setminus B(x_{0},2^{-(j+2)})$.    Let $l$ be a positive integer and let $\rho_{\scriptscriptstyle{l^{-1}}}$ be a standard mollifier,  where $\frac{1}{l}$ now plays the r\^{o}le of the (small) parameter of mollification.  Extend $w$ to agree with $\mathbf{i}$ outside the ball $B$.   Define $w_{l}=\rho_{\scriptscriptstyle{l^{-1}}} \ast w$ and let
\[ w^{l,j}(x) = (1-\eta_{j}(x))w(x) + \eta_{j}(x)w_{l}(x) \quad \quad \textrm{for} \ x \in B.\]
Writing $w^{l,j} = w + \eta_{j} (w_{l}-w)$ and observing that, since $w$ is continuous away from $x_{0}$, 
\[||w_{l}-w||_{L^{\infty}(B\setminus B(x_{0},2^{-(j+2)}))} < \tau_{0}/2 \]
for sufficiently large $l$, it follows that $|w^{l,j}| \geq \tau_{0}/2$ for all $l \geq l(j)$.  For later use, we may also assume $l(j) > 2^{j+3}$ for all $j$.  Hence part (i) of the lemma.

Next, by construction, $w^{l,j}$ agrees with $\eta_{j}(x)w_{l(j)}(x)$ in $B \setminus B(x_{0},2^{-(j+2)})$ and so is smooth there, from which part (i) of the lemma follows once we set $l=l(j)$.  Define
\[w^{(j)}:=w^{l(j),j}.\]

It is clear that $||w^{(j)}-w||_{2q} \leq ||w_{l(j)}-w||_{2q}$, which, by a standard property of mollifiers, converges to $0$ as $l(j) \to \infty$.  Moreover, since
\[ \nabla w^{(j)} = \nabla w + (w^{(j)} - w) \otimes \nabla \eta_{j},\]
we have
\begin{equation*}\begin{split} ||\nabla w^{(j)} - \nabla w||_{2q} & \leq ||w^{(j)}-w||_{\infty} ||\nabla \eta_{j}||_{2q} \\
& \leq C \left( \int_{B(x_{0},2^{-(j+1)}) \setminus B(x_{0},2^{-(j+2)})} (c 2^{j+2})^{2q} \,\,dx\right)^{\frac{1}{2q}}  
\\ & \leq C (2^{-j})^{(\frac{3}{2q}-1)} 
\end{split}\end{equation*}
which, since $2q <3$, tends to zero as $j \to \infty$.  Hence part (iii).

Finally, recall \eqref{e:shosta}: we wish to show that
\begin{equation*}\int_{B(x_{0},2^{-j})\setminus B(x_{0},2^{-(j+1)})} \left|\nabla w^{(j)}\right|\,dx \leq \int_{B(x_{0},(2^{1-j}))\setminus B(x_{0},(2^{-(j+2)}))}|\nabla w|\,dx.\end{equation*}
By construction, $w^{(j)}$ agrees with the mollified version $w_{l(j)}$ of $w$ on $B(x_{0},2^{-j})\setminus B(x_{0},2^{-(j+1)})$.   
But, since $l(j) > 2^{j+3}$ by construction, it is then easy to check that the estimate
\[ \int_{B(x_{0},2^{-j})\setminus B(x_{0},2^{-(j+1)})} \left|\nabla w^{j(\eps)}\right|\,dx \leq \int_{B(x_{0},2^{1-j})\setminus B(x_{0},2^{-(j+2)})} |\nabla w|\,dx\]
holds for all $\eps$.   (See Ziemer \cite[Theorem 1.6.1]{Zi} for the basic idea, adapted here to a finite domain:  the main point is that mollification takes place on a scale $1/l(j) < 2^{-(3+j)}$, which explains the slight enlargement of the region of integration in the last line above.)
In summary, \eqref{e:shosta} is satisfied.
\end{proof}

\section{Appendix B}
Here we give details of the perturbation of the identity $g(\cdot;\tau)$ used in the proof of Proposition \ref{p:furthersmaller}.

\begin{lem}\label{a:appendixb} There exists a family of diffeomorphisms $\{g(\cdot;\tau): \ \ \tau \in [0,\tau_{0})\}$ of the unit ball in $\mathbb{R}^{3}$ such that 
\begin{itemize}\item[(a)] $g(x;\tau) = \mathbf{i} \ \ \forall x \in \partial B$;
\item[(b)] each $g(\cdot,\tau)$ has a unique zero at $\tau e_{1}$;
\item[(c)] $K(g(\cdot;\tau)) \to K(\bf{i})$ as $\tau \to 0$.
\end{itemize}
\end{lem}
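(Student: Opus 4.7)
My plan is to construct $g(\cdot\,;\tau)$ explicitly as a smooth cut-off translation of the identity. Fix a smooth function $\phi:\overline{B}\to [0,1]$ with $\phi\equiv 1$ on $\overline{B(0,1/4)}$ and $\phi\equiv 0$ outside $B(0,1/2)$, and for $\tau\in [0,\tau_0)$ (with $\tau_0$ to be chosen) define
\[ g(x;\tau) := x - \tau e_1\,\phi(x), \qquad x\in B. \]
Property (a) is then immediate because $\phi\equiv 0$ on $\partial B$. Property (b) follows from $\phi(\tau e_1)=1$ for $\tau<1/4$, which gives $g(\tau e_1;\tau)=0$, and from $\nabla g = \1 - \tau\, e_1\otimes\nabla\phi$ being a small perturbation of $\1$; fixing $\tau_0\|\nabla\phi\|_\infty < 1/2$ yields $\det\nabla g>1/2$ pointwise and a global diffeomorphism of $B$ onto $B$ (injectivity via $|g(x_1)-g(x_2)|\geq \tfrac{1}{2}|x_1-x_2|$, and $g(B)=B$ because $g\equiv\mathbf{i}$ outside $B(0,1/2)$).

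The main content is (c). Since $g\equiv\mathbf{i}$ outside $B(0,1/2)$, the difference reduces to
\[ K(g(\cdot;\tau)) - K(\mathbf{i}) = \int_{B(0,1/2)} \zeta(x)\cdot\bigl(G(g(x;\tau)) - G(\mathbf{i})\bigr)\,dx, \]
where $\zeta(x)=x/|x|^{2q-1}$. I would split the domain at $|x|=1/4$. On the annulus $B(0,1/2)\setminus B(0,1/4)$ the quantities $|x|$ and $|g(\cdot;\tau)|$ are bounded below uniformly in $\tau$ for small $\tau$, so $\zeta$, $\nabla g$, and $1/|g|^3$ are all uniformly bounded there and $G(g(\cdot;\tau))\to G(\mathbf{i})$ uniformly, giving a vanishing contribution.

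On $B(0,1/4)$ we have $g(x;\tau)=x-\tau e_1$ and $\nabla g=\1$, so $G(g)(x)=(x-\tau e_1)/|x-\tau e_1|^3$. Substituting $y:=x-\tau e_1$ rewrites the relevant piece of $K(g)$ as
\[ \int_{B(-\tau e_1,\,1/4)} \frac{(y+\tau e_1)\cdot y}{|y+\tau e_1|^{2q-1}\,|y|^3}\,dy, \]
which I would show converges to $\int_{B(0,1/4)}|y|^{-2q}\,dy=\int_{B(0,1/4)}\zeta\cdot G(\mathbf{i})\,dx$ as $\tau\to 0$. The symmetric difference $B(-\tau e_1,1/4)\triangle B(0,1/4)$ has measure $O(\tau)$ and the integrand is bounded on it, so contributes $O(\tau)$. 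For the remaining integral over $B(0,1/4)$ I would split at $|y|=2\tau$: on $\{|y|\geq 2\tau\}$ the bound $|y+\tau e_1|\geq |y|/2$ together with $2-2q<0$ dominates the integrand by $C|y|^{-2q}$, which is integrable on $B(0,1/4)$ (since $2q<3$), so dominated convergence delivers the limit; on $\{|y|<2\tau\}$ the rescaling $y=\tau w$ reduces the integral to $\tau^{3-2q}\int_{|w|<2} |w+e_1|^{2-2q}|w|^{-2}\,dw$, a quantity of order $\tau^{3-2q}\to 0$.

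The main obstacle is precisely this last step: the singularity of $\zeta$ at the origin and the singularity of $G(g)$ at $\tau e_1$ coalesce as $\tau\to 0$, so no single integrable dominating function works uniformly in $\tau$. The $\tau$-dependent split at $|y|=2\tau$ together with the scaling $y=\tau w$ is what captures the cancellation between these merging singularities and produces the gain factor $\tau^{3-2q}$, which is positive precisely in the exponent range $2q\in(2,3)$ considered in this paper.
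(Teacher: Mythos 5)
Your construction is correct and takes a genuinely different, and in several respects cleaner, route than the paper's. The paper defines $g(\cdot;\tau)$ as the \emph{inverse} of a near-identity map $\rho(y)=y+\tau\phi(y)$, where $\phi$ is a $\tau$-scaled cut-off supported in $B(0,N\tau)$ with $\phi\equiv e_1$ on $B(0,3\tau)$ and $\|\nabla\phi\|_\infty\leq 2/(N\tau)$. With that choice $\adj\nabla g$ agrees with $\1$ only up to an $O(1/N)$ error that does not vanish as $\tau\to 0$, so the paper must split $K(g)-K(\mathbf i)$ into a piece of size $O(1/N)$ coming from $\adj\nabla g-\1$ and a second piece $\int_B|\zeta\cdot(g/|g|^3-x/|x|^3)|$, which it then bounds by $c(1+N^{3-2q})\tau^{3-2q}$; to conclude one must implicitly let $N$ grow with $1/\tau$. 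Your direct construction $g(x;\tau)=x-\tau e_1\phi(x)$ with a \emph{fixed} cut-off sidesteps this bookkeeping: on $B(0,1/4)$ one has $\nabla g\equiv\1$, so $G(g)=(x-\tau e_1)/|x-\tau e_1|^3$ with no correction term at all, and the entire estimate collapses to the change of variables $y=x-\tau e_1$ followed by the $|y|=2\tau$ split and rescaling that you describe, yielding $O(\tau)+O(\tau^{3-2q})\to 0$ with $2q\in(2,3)$. A further advantage worth noting: because your $\phi$ is fixed, $\|g(\cdot;\tau)-\mathbf i\|_{W^{1,\infty}(B)}=O(\tau)\to 0$, which is exactly condition (b) of Proposition~\ref{p:laster}; the paper's $g$ only satisfies $\|\nabla g-\1\|_\infty=O(1/N)$ for fixed $N$, so your version is in fact better adapted to the application the lemma is put to. The only point you should state more explicitly is that the final limit on $B(0,1/4)\cap\{|y|\geq 2\tau\}$ uses the dominating function $C|y|^{-2q}\chi_{B(0,1/4)}$ together with the pointwise limit $\chi_{\{|y|\geq 2\tau\}}I_\tau(y)\to|y|^{-2q}$ for each $y\neq 0$, so that dominated convergence applies to the $\tau$-dependent domain without issue.
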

\begin{proof}  Let $\phi$ be a smooth function with compact support in $B(0,N\tau)$, where $N$ is a large integer to be chosen shortly,  and such that $\phi(y)=e_{1}$ if $y \in B(0,3\tau)$, $|\phi(y)|\leq 1$ for all $y$ in $B$, and with $||\nabla \phi||_{L^{\infty}(B)} \leq \frac{2}{N \tau}$.  Define the map $\rho$ by $\rho(y)=y+\tau \phi(y)$.   Then, for $N \geq 4$, say, and $\tau$ sufficiently small,  $\rho$ is a diffeomorphism such that $\rho=\bf{i}$ on $\partial B$, and the same is true of $\rho^{-1}$.  Let $g(x;\tau)=rho^{-1}(x;\tau)$.  Note that $\rho^{-1}(x)=0$ if and only if $x=\rho(0)=\tau e_{1}$.  We claim that $g$ is the desired perturbation of $\bf{i}$.   

Firstly, we use the parameter $N$ to approximate the gradient part of $G(g)$, as follows. Note that $\nabla_{x}g(x;\tau)=(\1+\tau \nabla \phi(g(x;\tau)))^{-1}$, so that
\begin{equation}\begin{split}\adj \,\nabla_{x}g(x;\tau) & = \frac{\1+\tau \nabla \phi(g(x;\tau))}{\det(\1+\tau \nabla \phi(g(x;\tau)))} \\
& = \1 +\tau(\nabla \phi(g) - \Div \phi (g)) + o\left(\frac{1}{N}\right).
\end{split}\end{equation}
Here we have used the relation $\tau |\nabla \phi(g)| \leq \frac{2}{N}$ repeatedly.   Therefore
\[\int_{B} \left|\zeta(x)\cdot \left(G(g)-\frac{g}{|g|^{3}}\right)\right| \,dx \leq \frac{c}{N}\int_{B}|\zeta(x)|(|g(x)|^{-2}) \,dx \]
for a constant $c$.  The integral on the right of the last inequality is finite and independent of $N$ and $\tau$.  To see this, change variables by setting $z=g(x;\tau)$ (which implies $x=\tau \phi(z) + z$) and estimate 
\begin{equation*}\begin{split} \int_{B}|\zeta(x)|(|g(x)|^{-2}) \,dx & \leq c \int_{B}|\tau \phi(z) + z|^{2-2q}|z|^{-2}\,dz \\ & \leq 
c \int_{B} |z|^{2-2q-2}|z|^{2}\,d|z|,\end{split}\end{equation*}
which, since $2q <3$, is finite.    Now
\begin{equation*}\begin{split}\int_{B} \bigg|\zeta(x) \cdot \bigg(\frac{g(x;\tau)}{|g(x;\tau)|^3}-& \frac{x}{|x|^{3}} \bigg)\bigg|\,dx  = \int_{B}|x|^{-2q}\left( \frac{x \cdot g(x;\tau)}{|g(x;\tau)|^{3}}-1\right)\,dx
\\& \ \leq \int_{B} |z+\tau \phi(z)|^{-2q}\left|\frac{|z+\tau \phi(z)|(z+\tau \phi(z))\cdot z}{|z|^{3}} -1 \right|\,dz 
\end{split}\end{equation*}
and so, in view of all of the above, a sufficient condition for \[ K(g(\cdot;\tau)) \to K(\mathbf{i}) \ \ \ \textrm{as} \ \tau \to 0\]
is 
\begin{equation}\label{e:swine}\int_{B} |z+\tau \phi(z)|^{-2q}\left|\frac{|z+\tau \phi(z)|(z+\tau \phi(z))\cdot z}{|z|^{3}} -1 \right|\,dz  \to 0 \ \ \textrm{as} \ \tau \to 0.\end{equation}

\vspace{2mm}
\noindent\textbf{Proof of \eqref{e:swine}:}  Recall that $\phi(y) = \tau e_{1}$ on $B(0,3\tau)$.  Let $\Pi^{-}=\{z \in B: \ |z+(\tau e_{1}/2)| < |z|\}$ and $\Pi^{+}=\{z \in B: \ |z+(\tau e_{1}/2)| > |z|\}$.   Write $P(z)$ for the integrand of \eqref{e:swine} for brevity.    Then 
\begin{equation*}\begin{split} \int_{B(0,3\tau) \cap \Pi^{-}} P(z) \,dz & \leq \int_{B(0,3\tau) \cap \Pi^{-}} |z+\tau e_{1}|^{2-2q} \left(\frac{\tau}{2}\right)^{-2} \, dz \\ & \quad \quad \quad \quad \quad \quad \quad \quad \quad \quad \quad \quad + \int_{B(0,3\tau) \cap \Pi^{-}}|z+\tau e_{1}|^{-2q}\,dz \\ &
 \leq \int_{B(0,3\tau)} |y|^{2-2q} \left(\frac{\tau}{2}\right)^{-2} \,dy + \int_{B(0,3\tau)}|y|^{-2q}\,dy
 \\ & \leq c \tau^{3-2q}.
\end{split}\end{equation*}
Similarly, this time without the change of variables in the last line,
\begin{equation*}\begin{split} \int_{B(0,3\tau) \cap \Pi^{+}} P(z) \,dz  & \leq  \int_{B(0,3\tau) \cap \Pi^{+}}  \frac{(\tau/2)^{-2q}(2\tau)^{2}}{|z|^{2}}\,dz + \int_{B(0,3\tau) \cap \Pi^{+}}(\tau/2)^{-2q} \,dz
 \\ & \leq c \tau^{3-2q}.
\end{split}\end{equation*}
Finally, we turn to $\int_{B \setminus B(0,3 \tau)} P(z)\,dz$.   First note that $P(z)=0$ when $z \notin \textrm{supp} \, \phi$, so that the integral is over $B(0,N\tau)$.   Next, define
\[ \chi  = 2 \frac{\tau}{|z|} \bar{z} \cdot \phi(z) + \frac{\tau^{2}}{|z|^{2}}|\phi(z)|^{2} \]
and note that $|\chi| \leq \frac{7}{9}$ whenever $|z| > 3\tau$.   Hence we may expand
\[|z+\tau \phi(z)| = |z| + |z|\sum_{n=1}^{\infty}b_{n}\chi^{n}\]
for $z \in B(0,N\tau)\setminus B(0,3\tau)$ and for an appropriate choice of coefficients $b_{n}$, all of which satisfy $|b_{n}| \leq 1$.   In particular, note that 
\begin{equation*}\begin{split}\frac{|z+ \tau \phi(z)|(z+ \tau \phi(z)) \cdot z}{|z|^{3}}  - 1 & =  \sum_{n=1}^{\infty} b_{n}\chi^{n}\frac{(z+ \tau \phi(z)) \cdot z}{|z|^{2}} \end{split}\end{equation*}
and
\begin{equation*}\begin{split}\int_{B(0,N\tau)\setminus B(0,3\tau)}\frac{|z+ \tau \phi(z)|^{-2q}|(z+ \tau \phi(z)) \cdot z}{|z|^{2}} \,dz& \leq c \int_{3\tau}^{N \tau} |z|^{2-2q}\,d|z| \\ & \leq
c (N\tau)^{3-2q}, \end{split}\end{equation*}
where we have used the elementary estimate $|z+\tau \phi(z)| \geq c |z|$ for $z \in B(0,N\tau)\setminus B(0,3\tau)$.
Hence, by applying a suitable convergence theorem, we obtain
\begin{equation*}\begin{split}\int_{B(0,N\tau)\setminus B(0,3\tau)} P(z) \,dz & \leq c (N\tau)^{3-2q}\sum_{n=1}^{\infty}\left(\frac{7}{9}\right)^{n}. \end{split}\end{equation*}
In conclusion, 
\[\int_{B}P(z)\,dz \leq c(1 + N^{3-2q})\tau^{3-2q}\]
for some constant $c$ independent of $N$ and $\tau$.   The right-hand side of the last estimate may be made arbitrarily small by choosing $\tau$ small enough (for fixed $N$, chosen earlier), which proves \eqref{e:swine}.

\end{proof}
\end{appendix}

\end{document}